\newcommand{\R}{\mathbb{R}}
\newcommand{\N}{\mathbb{N}}
\newcommand{\Q}{\mathbb{Q}}
\newcommand{\tn}{\mathbb{P}}
\newcommand{\calJ}{\mathcal{J}}
\newcommand{\calT}{\mathcal{T}}
\newcommand{\calL}{\mathcal{L}}
\newcommand{\calB}{\mathcal{B}}
\newcommand{\calI}{\mathcal{I}}
\newcommand{\Fd}{\dim_{\mathrm{F}}}
\newcommand{\Hd}{\dim_{\mathrm{H}}}
\newcommand{\calM}{\mathcal{M}}
\newcommand{\dd}{\,\mathrm{d}}
\renewcommand{\epsilon}{\varepsilon}
\numberwithin{equation}{section}
\theoremstyle{plain}
\newtheorem{thm}[equation]{Theorem}
\newtheorem{lemma}[equation]{Lemma}
\newtheorem{ex}[equation]{Example}
\newtheorem{cor}[equation]{Corollary}
\newtheorem{proposition}[equation]{Proposition}
\theoremstyle{definition}
\newtheorem{definition}[equation]{Definition}
\theoremstyle{remark}
\newtheorem{remark}[equation]{Remark}
\newtheorem*{ack}{Acknowledgement}
\newcommand{\nref}[1]{(\hyperref[#1]{#1})}
\begin{document}

\title[Absolute continuity of self-similar measures]{Absolute continuity in families of parametrised non-homogeneous self-similar measures}

\author{Antti K\"aenm\"aki}
\address[Antti K\"aenm\"aki]
        {Department of Physics and Mathematics \\
         University of Eastern Finland \\
         P.O.\ Box 111 \\
         FI-80101 Joensuu \\
         Finland}
\email{antti.kaenmaki@uef.fi}

\author{Tuomas Orponen}
\address[Tuomas Orponen]
        {Department of Mathematics and Statistics \\
         University of Helsinki \\
         P.O.\ Box 68 (Pietari Kalmin katu 5) \\
         FI-00014 University of Helsinki \\
         Finland}
\email{tuomas.orponen@helsinki.fi}

\keywords{Self-similar measure, absolute continuity, convolution, projection}
\subjclass[2010]{Primary 28A80; Secondary 28A78, 42A38}
\thanks{T.O. is supported by the Academy of Finland via the project \emph{Quantitative rectifiability in Euclidean and non-Euclidean spaces}, grant No. 309365.}

\begin{abstract} Let $\mu$ be a planar self-similar measure with similarity dimension exceeding $1$, satisfying a mild separation condition, and such that the fixed points of the associated similitudes do not share a common line. Then, we prove that the orthogonal projections $\pi_{e\sharp}(\mu)$ are absolutely continuous for all $e \in S^{1} \, \setminus \, E$, where the exceptional set $E$ has zero Hausdorff dimension. The result is obtained from a more general framework which applies to certain parametrised families of self-similar measures on the real line. Our results extend previous work of Shmerkin and Solomyak from 2016, where it was assumed that the similitudes associated with $\mu$ have a common contraction ratio.
\end{abstract}

\maketitle

\tableofcontents

\section{Introduction} This paper studies the absolute continuity of parametrised non-homogeneous self-similar measures on $\R$. It is closely related to the works of Shmerkin \cite{Sh}, Shmerkin and Solomyak \cite{SS}, and Saglietti, Shmerkin, and Solomyak \cite{SSS}.

\subsection{Statement of the main result} We start by formulating the main result; we explain its connection to previous work in the next subsection and after that, we finish the introduction by stating and proving the application for projections of self-similar measures.

\begin{definition}[Setting of the main result]\label{assumptionsIntro}
Let $U \subset \R$ be an open interval, and $m \geq 2$. We associate to each $u \in U$ a list of contractive similitudes on $\R$ of the form
\begin{equation}\label{form61Intro}
\Psi_{u} := (\psi_{u,1},\ldots,\psi_{u,m}) = (\lambda_{1}x + t_{1}(u),\ldots,\lambda_{m}x + t_{m}(u)),
\end{equation} 
where 
\begin{displaymath}
\lambda_{1},\ldots,\lambda_{m} \in (0,1) \quad \text{and} \quad t_{1}(u),\ldots,t_{m}(u) \in \R, \quad u \in U.
\end{displaymath}
So, the translations are allowed to depend on $u \in U$, but their number is constant, $m$. We make the following assumptions:
\begin{itemize}
\item[(A1)] The map $u \mapsto t_{j}(u)$ is real-analytic, and the family $\{\Psi_{u}\}_{u \in U}$ satisfies transversality of order $K$ for some $K \in \N$; see Definition \ref{transversality}.
\item[(A2)] There exist three sequences $\mathbf{i},\mathbf{j},\mathbf{k} \in \{1,\ldots,m\}^{\N}$ such that none of the maps $u \mapsto \psi_{u,\mathbf{i}}(0)$, $u \mapsto \psi_{\mathbf{j},u}(0)$ and $u \mapsto \psi_{\mathbf{k},u}(0)$ is a convex combination of the other two. Here $u \mapsto \psi_{u,\mathbf{i}}(0)$, for example, refers to the map
\begin{displaymath}
u \mapsto \lim_{n \to \infty} \psi_{u,\mathbf{i|_{n}}}(0) := \lim_{n \to \infty} \psi_{u,i_{1}} \circ \cdots \circ \psi_{u,i_{n}}(0), \qquad \mathbf{i}|_{n} = (i_{1},\ldots,i_{n}).
\end{displaymath}
\end{itemize}
\begin{itemize}
\item[(A3)] For some probability vector $\mathbf{p} = (p_{1},\ldots,p_{m}) \in (0,1)^{m}$ with $p_{1} + \cdots + p_{m} = 1$, the \emph{similarity dimension}
\begin{displaymath}
s(\bar{\lambda},\mathbf{p}) := \frac{\sum_{j = 1}^{m} p_{j} \log p_{j}}{\sum_{j = 1}^{m} p_{j} \log \lambda_{j}},
\end{displaymath} 
where $\bar{\lambda} = (\lambda_1,\ldots,\lambda_m)$,
satisfies $s(\bar{\lambda},\mathbf{p}) > 1$.
\end{itemize}
\end{definition}

Here is the definition of transversality mentioned in (A1):

\begin{definition}[Transversality of order $K$]\label{transversality}
Let $\{\Psi_{u}\}_{u \in U}$ be a parametrised family of similitudes as in \eqref{form61Intro}, let $K \in \{0,1,2,\ldots\}$, and assume that the map $u \mapsto t_{j}(u)$ is $K$ times continuously differentiable for all $1 \leq j \leq m$. For $u \in U$, write 
\begin{displaymath}
\Delta_{\mathbf{i},\mathbf{j}}(u) := \psi_{u,\mathbf{i}}(0) - \psi_{u,\mathbf{j}}(0), \qquad \mathbf{i},\mathbf{j} \in \{1,\ldots,m\}^{n}, \; n \in \N.
\end{displaymath}
The family $\{\Psi_{u}\}_{u \in U}$ \emph{satisfies transversality of order $K$} if there exist a constant $c > 0$ and a sequence of natural numbers $(n_{j})_{j \in \N}$ such that
\begin{equation}\label{transEq}
\max_{k \in \{0,\ldots,K\}} |\Delta_{\mathbf{i},\mathbf{j}}^{(k)}(u)| \geq c^{n_{j}}, \qquad u \in U, \: \mathbf{i},\mathbf{j} \in \{1,\ldots,m\}^{n_{j}}, \: \mathbf{i} \neq \mathbf{j}, \: j \in \N.
\end{equation} 
Here $\Delta^{(k)}_{\mathbf{i},\mathbf{j}}$ is the $k^{th}$ derivative of $\Delta_{\mathbf{i},\mathbf{j}}$.
\end{definition}

This notion of transversality is a variant of the one used by Hochman in \cite[Definition 5.6]{Ho}. The notion above is weak enough to be applied in Proposition \ref{projProp}, yet strong enough to imply the zero-dimensionality of the exceptional set $E$ appearing in \cite[Theorem 1.7]{Ho}. We verify this fact in Proposition \ref{HCor} and Appendix \ref{appendixA}. 

Now we can state our main result:

\begin{thm}\label{mainIntro}
Let $\mu_{u}$, $u \in U$, be the self-similar measure associated to a pair $(\Psi_{u},\mathbf{p})$ satisfying the assumptions in Definition \ref{assumptionsIntro}. Then, there exists a set $E \subset U$ of Hausdorff dimension $0$ such that $\mu_{u} \ll \calL^{1}$ for all $u \in U \setminus E$.
\end{thm}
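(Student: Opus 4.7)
\emph{Proof sketch.} The plan is to combine Hochman's parametrised entropy-dimension theorem with a Shmerkin--Solomyak style $L^{q}$-convolution argument, adapting the latter to the non-homogeneous setting by means of a dyadic decomposition of cylinder scales.

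The first step is to invoke Proposition \ref{HCor} (the paper's parametric reformulation of \cite{Ho}). Assumption (A1), the transversality of order $K$, together with (A3) should produce an exceptional set $E_{0} \subset U$ with $\Hd E_{0} = 0$ such that for every $u \in U \setminus E_{0}$ the entropy dimension of $\mu_{u}$ equals $\min\{1, s(\bar\lambda, \mathbf{p})\} = 1$. This already identifies the exceptional set of the theorem; the remaining task is to upgrade ``entropy dimension $1$'' to ``absolutely continuous'' for each such $u$.

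For the upgrade, I would expand self-similarity $n$-fold,
\[
\mu_{u} = \sum_{\mathbf{i} \in \{1,\ldots,m\}^{n}} p_{\mathbf{i}} (\psi_{u,\mathbf{i}})_{\sharp} \mu_{u},
\]
and try to recover the Shmerkin--Solomyak convolution structure. In the homogeneous case $\lambda_{j} \equiv \lambda$ of \cite{SS} this expansion equals $\tau_{u}^{n} * (S_{\lambda^{n}})_{\sharp} \mu_{u}$, where $\tau_{u}^{n} := \sum p_{\mathbf{i}} \delta_{\psi_{u,\mathbf{i}}(0)}$ and $S_{\lambda}$ denotes scaling by $\lambda$; Shmerkin's $L^{q}$-inverse theorem then converts the entropy dimension $1$ of $\tau_{u}^{n}$ into a uniform $L^{q}$-bound on $\mu_{u}$. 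In the non-homogeneous case I would group the $n$-cylinders into dyadic scale bands
\[
\calB_{k} := \{\mathbf{i} \in \{1,\ldots,m\}^{n} : \lambda_{\mathbf{i}} \in [2^{-(k+1)}, 2^{-k})\},
\]
argue by a large-deviation estimate for $\sum_{\ell} \log \lambda_{i_{\ell}}$ that the $p_{\mathbf{i}}$-weight concentrates on bands near $k \approx n \sum_{j} p_{j}|\log_{2} \lambda_{j}|$, and on each such band write
\[
\sum_{\mathbf{i} \in \calB_{k}} p_{\mathbf{i}} (\psi_{u,\mathbf{i}})_{\sharp} \mu_{u} \approx \sigma_{u,k}^{n} * (S_{2^{-k}})_{\sharp} \mu_{u}, \qquad \sigma_{u,k}^{n} := \sum_{\mathbf{i} \in \calB_{k}} p_{\mathbf{i}} \delta_{\psi_{u,\mathbf{i}}(0)}.
\]
The entropy dimension $1$ furnished by Step~1 transfers to $\sigma_{u,k}^{n}$ at scale $2^{-k}$, so Shmerkin's inverse theorem applies to the discrete convolution factor on each band, and summing over bands yields the $L^{q}$-bound on $\mu_{u}$.

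The main obstacle is controlling the ``$\approx$'' in the last display uniformly in $u$ with errors small enough that no extra positive-dimensional exceptional set is introduced. Within a band the contractions $\lambda_{\mathbf{i}}$ differ only by a bounded factor, so $(\psi_{u,\mathbf{i}})_{\sharp}\mu_{u}$ and its common-ratio surrogate differ by a smoothing at comparable scale; absorbing this into the $L^{q}$-estimate requires a modulus-of-continuity bound for $\lambda \mapsto (S_{\lambda})_{\sharp}\mu_{u}$ uniform in $u \in U \setminus E_{0}$, which should follow from the real analyticity of the translations $u \mapsto t_{j}(u)$ together with the quantitative order-$K$ transversality of (A1). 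Assumption (A2) plays a secondary role here, but is essential in the planar projection application \ref{projProp}, where it is what produces two affinely independent coordinates of the encoding map and thereby enables the verification of transversality in the first place.
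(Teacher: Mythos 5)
Your Step~1 is on the right track: assumption (A1) does yield, via Proposition~\ref{HCor} and Hochman's theorem, a zero-dimensional exceptional set outside which $\mu_u$ has dimension $1$. But the upgrade from dimension $1$ to absolute continuity is where your proposal breaks down, and the specific mechanism you suggest --- dyadic scale-bands --- is precisely the naive idea that the paper (following \cite{GSSY,SSS}) is built to avoid. The equality $\mu_u = \tau_u^n * (S_{\lambda^n})_\sharp \mu_u$ that drives the Shmerkin--Solomyak argument is exact in the homogeneous case. In your band $\calB_k$ the expansion is $\sum_{\mathbf{i}\in\calB_k} p_{\mathbf{i}}\, \delta_{\psi_{u,\mathbf{i}}(0)} * (S_{\lambda_{\mathbf{i}}})_\sharp\mu_u$ with contraction ratios $\lambda_{\mathbf{i}}$ ranging over a factor of~$2$; this is a sum of convolutions against \emph{different} rescalings of $\mu_u$, not a single convolution. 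Replacing $(S_{\lambda_{\mathbf{i}}})_\sharp\mu_u$ by $(S_{2^{-k}})_\sharp\mu_u$ is a perturbation of the scale by a quantity comparable to the scale itself, and no modulus of continuity for $\lambda \mapsto (S_\lambda)_\sharp\mu_u$ that is useful for $L^q$ or discretized-entropy bookkeeping is available when $\mu_u$ is a priori singular --- which it is, since that singularity is exactly the hypothesis one is trying to contradict. Real analyticity of $u \mapsto t_j(u)$ and order-$K$ transversality give you no purchase here: they control the $u$-dependence, not the $\lambda$-dependence at a fixed $u$.

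The paper sidesteps this by a different decomposition entirely: the ``type'' decomposition from \cite{GSSY}. It groups length-$N$ words not by dyadic scale but by exact multiset of letters, so that all words of a given type $\tau$ have \emph{identical} contraction ratio $\lambda(\tau)$. This produces (Proposition~\ref{disintegrationProp}) a disintegration $\mu_u = \int_\Omega \eta_u^\omega \dd\tn(\omega)$ into random measures $\eta_u^\omega$ each of which is an honest infinite convolution. The proof then splits $\eta_u^\omega = \eta^{\omega}_{\mathrm{small},u} * \eta^{\omega}_{\mathrm{big},u}$, shows $\eta^\omega_{\mathrm{small},u}$ has positive Fourier dimension (the Erd\H{o}s--Kahane argument of Lemma~\ref{fourierDecayLemma}), shows $\eta^\omega_{\mathrm{big},u}$ has Hausdorff dimension $1$ outside the zero-dimensional set from Proposition~\ref{HCor}, and concludes $\eta^\omega_u \ll \calL^1$ by \cite[Lemma~2.1(2)]{Sh}. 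No $L^q$ inverse theorem appears. This also corrects a misattribution in your last paragraph: (A2) is not secondary in Theorem~\ref{mainIntro}. It is precisely what, via Lemma~\ref{nonLinearityLemma} and Corollary~\ref{fourierDecayCor2}, guarantees the non-degeneracy needed for the Fourier decay of the small factor; without it the whole absolute continuity mechanism is lost. In Proposition~\ref{projProp}, transversality (A1) is delivered by (P1), while (P2) is what supplies (A2).
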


Here $\calL^1$ denotes the Lebesgue measure on $\mathbb{R}$. The definition of a self-similar measure can be found in Section \ref{disintegration}.

\subsection{Comparison to previous work}\label{comparisonSection}
Theorem \ref{mainIntro} above is modelled after \cite[Theorem A]{SS} of Shmerkin and Solomyak. We now discuss the main differences between the two theorems, and also draw connections to other related results. First, \cite[Theorem A]{SS} allows for both the contraction parameters $\lambda_{j} = \lambda_{j}(u)$ and the translation vectors $t_{j} = t_{j}(u)$ (as in \eqref{form61Intro}) to depend on $u$. However, it is assumed in \cite[Theorem A]{SS} that $\lambda_{i}(u) = \lambda_{j}(u)$ for all $1 \leq i,j \leq m$. In other words, the lists of similitudes are equicontractive, but the contraction ratio may vary with $u$. The equicontractivity assumption is convenient, because $\mu_{u}$ then looks like a "generalised Bernoulli convolution", and a technique pioneered by Shmerkin in \cite{Sh} (in the context of classical Bernoulli convolutions) is available to study the absolute continuity of $\mu_{u}$. A crucial feature of (classical and generalised) Bernoulli convolutions in the proofs of \cite{Sh,SS} is the property that they can be expressed as infinite convolutions of (scaled copies) of a single atomic measure.

In the non-homogeneous setting of Theorem \ref{mainIntro}, the measures $\mu_{u}$ no longer have an infinite convolution structure, and hence the method of \cite{Sh,SS} is not directly applicable. A way around this problem was found in \cite{GSSY}: Galicer, Saglietti, Shmerkin, and Yavicoli (see \cite[Lemma 6.6]{GSSY}) discovered a way to express non-homogeneous self-similar measures as averages over measures with an infinite convolution structure. This naturally comes at a price: the infinite convolutions are no longer self-similar measures. The components of the infinite convolution are no longer rescaled copies of a single measure, but are, rather, drawn at random from a finite pool of (atomic) measures. 

It turns out that the lack of strict self-similarity is not an insurmountable problem. In \cite{SSS}, Saglietti, Shmerkin and Solomyak used the decomposition from \cite{GSSY} to study the absolute continuity of parametrised self-similar measures, where the translation vectors $t_{1},\ldots,t_{m}$ are fixed, but the contractions $\lambda_{1},\ldots,\lambda_{m}$ vary freely in an open set. The initial motivation for our studies was to understand if the technique in \cite{SSS} could be adapted to give new information on the projections of planar self-similar measures -- beyond the homogeneous case covered by \cite[Theorem A]{SS}. 
It can: the reader should, thus, view Theorem \ref{mainIntro} not only as a non-homogeneous variant of \cite[Theorem A]{SS}, but also as an adaptation of \cite[Theorem 1.1]{SSS} to the case where the translation parameters vary. 

\subsection{Application to projections of planar self-similar measures}
We now describe the main application of Theorem \ref{mainIntro}, to projections of planar self-similar measures. Let
\begin{displaymath}
\Psi = (\psi_{1},\ldots,\psi_{m}) = (\lambda_{1}x + t_{1},\ldots,\lambda_{m}x + t_{m}), \quad \lambda_{j} \in (0,1), \: t_{j} \in \R^{2},
\end{displaymath}
be a list of contractive homotheties on $\R^{2}$, and let $\mu$ be the self-similar measure associated to $\Psi$ and some probability vector $\mathbf{p} \in (0,1)^{m}$ such that 
\begin{equation}\label{form71}
s(\bar{\lambda},\mathbf{p}) > 1.
\end{equation}
Let $\pi_{u} \colon \R^{2} \to \R$, $u \in (0,2\pi)$, be the orthogonal projection $\pi_{u}(x) = x \cdot (\cos u, \sin u)$, and note that the measures $\pi_{u\sharp}\mu$ are again self-similar: they are the self-similar measures associated to the probability vector $\mathbf{p}$, and the lists of similitudes
\begin{displaymath}
\Psi_{u} = (\lambda_{1}x + \pi_{u}(t_{1}),\ldots,\lambda_{m}x + \pi_{u}(t_{m})), \qquad u \in (0,2\pi).
\end{displaymath} 
Note that the contraction ratios $\lambda_{1},\ldots,\lambda_{m}$ are independent of $u$, so $(\Psi_{u},\mathbf{p})$ satisfies the condition (A3) by \eqref{form71}. To verify that the family of similitudes $\{\Psi_{u}\}_{u \in U}$ also meets the assumptions (A1) and (A2), we need to impose the following two hypotheses on $\Psi$:
\begin{itemize}
\item[(P1)] $\limsup_{n \to \infty} \log \Delta_{n}/n > -\infty$, where
\begin{displaymath}
\Delta_{n} = \Delta_{n}(\Psi) = \min\{|\Delta_{\mathbf{i},\mathbf{j}}| : \mathbf{i},\mathbf{j} \in \{1,\ldots,m\}^{n}, \: \mathbf{i} \neq \mathbf{j}\},
\end{displaymath}
and $\Delta_{\mathbf{i},\mathbf{j}} = \psi_{\mathbf{i}}(0) - \psi_{\mathbf{j}}(0)$.
\item[(P2)] The fixed points of the similitudes in $\Psi$ do not lie on a common line.
\end{itemize}

We make some remarks on the sharpness of these assumptions after the proof of the following proposition. In case the self-similar measure is generated by maps having no rotations, the proposition is new in the non-homogeneous case, and also relaxes the separation assumption compared to \cite[Theorem B(i)]{SS} in the homogeneous case. If the maps have dense rotations, then the reader is referred to the works of Shmerkin and Solomyak \cite[Theorem B(ii)]{SS} and Rapaport \cite{Ra}.

\begin{proposition}\label{projProp}
If the pair $(\Psi,\mathbf{p})$ satisfies \eqref{form71} and \textup{(P1)}-\textup{(P2)}, then the family $\{\Psi_{u}\}_{u \in U}$ satisfies \textup{(A1)}-\textup{(A3)}. In particular, the self-similar measure $\mu$ associated to the pair $(\Psi,\mathbf{p})$ satisfies $\pi_{u\sharp}\mu \ll \mathcal{L}^{1}$ for all $u \in U \setminus E$, where $\Hd E = 0$.
\end{proposition}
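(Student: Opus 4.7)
My plan is to verify the three hypotheses (A1)--(A3) of Definition \ref{assumptionsIntro} for the parametrised family $\{\Psi_{u}\}_{u \in U}$ (one can just take $U = (0,2\pi)$) and then invoke Theorem \ref{mainIntro} verbatim. Condition (A3) is essentially free: as noted immediately before the proposition, the contraction ratios of the $\Psi_{u}$'s are the constants $\lambda_{1},\ldots,\lambda_{m}$, independent of $u$, so $s(\bar\lambda,\mathbf{p})>1$ is exactly \eqref{form71}.

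For (A1), the real-analyticity of $u \mapsto \pi_{u}(t_{j}) = t_{j} \cdot (\cos u, \sin u)$ is immediate, so the content is the transversality estimate. The key observation is the compatibility $\pi_{u} \circ \psi_{i} = \psi_{u,i} \circ \pi_{u}$, which iterates to give
\[
\Delta_{\mathbf{i},\mathbf{j}}(u) = \psi_{u,\mathbf{i}}(0) - \psi_{u,\mathbf{j}}(0) = \pi_{u}\bigl(\psi_{\mathbf{i}}(0) - \psi_{\mathbf{j}}(0)\bigr).
\]
Writing $\psi_{\mathbf{i}}(0) - \psi_{\mathbf{j}}(0) = (a,b) \in \R^{2}$, this is $a\cos u + b\sin u$ with $u$-derivative $-a\sin u + b\cos u$, so
\[
|\Delta_{\mathbf{i},\mathbf{j}}(u)|^{2} + |\Delta'_{\mathbf{i},\mathbf{j}}(u)|^{2} = a^{2}+b^{2} = |\psi_{\mathbf{i}}(0)-\psi_{\mathbf{j}}(0)|^{2},
\]
yielding the uniform lower bound $\max\{|\Delta_{\mathbf{i},\mathbf{j}}(u)|,|\Delta'_{\mathbf{i},\mathbf{j}}(u)|\} \geq |\psi_{\mathbf{i}}(0)-\psi_{\mathbf{j}}(0)|/\sqrt{2}$ for every $u \in U$. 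Hypothesis (P1) furnishes a sequence $(n_{j})$ and constant $c_{0}>0$ with $\min_{\mathbf{i}\neq\mathbf{j}}|\psi_{\mathbf{i}}(0)-\psi_{\mathbf{j}}(0)| \geq c_{0}^{n_{j}}$ for words of length $n_{j}$, which combines with the previous bound to give \eqref{transEq} with $K=1$ (after shrinking $c_{0}$ slightly to absorb the $1/\sqrt{2}$ and finitely many small $n_{j}$). That $K=1$ suffices reflects the fact that $\cos$ and $\sin$ share no common zero.

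For (A2), hypothesis (P2) lets me choose three indices $i_{0},j_{0},k_{0}$ whose planar fixed points $p_{i_{0}},p_{j_{0}},p_{k_{0}} \in \R^{2}$ are not collinear. I take the constant sequences $\mathbf{i}=(i_{0},i_{0},\ldots)$, $\mathbf{j}=(j_{0},j_{0},\ldots)$, $\mathbf{k}=(k_{0},k_{0},\ldots)$. For such sequences the limit $\psi_{u,\mathbf{i}}(0)$ is the fixed point of $\psi_{u,i_{0}}$, which equals $\pi_{u}(p_{i_{0}})$, and analogously for $\mathbf{j},\mathbf{k}$. If the identity $\pi_{u}(p_{i_{0}}) = \alpha\pi_{u}(p_{j_{0}}) + (1-\alpha)\pi_{u}(p_{k_{0}})$ held for some $\alpha \in [0,1]$ and all $u \in U$, then linearity of $\pi_{u}$ together with the fact that $\{(\cos u,\sin u):u \in U\}$ spans $\R^{2}$ would force the planar identity $p_{i_{0}} = \alpha p_{j_{0}} + (1-\alpha)p_{k_{0}}$, placing $p_{i_{0}}$ on the segment $[p_{j_{0}},p_{k_{0}}]$ and hence the three points on a line, contradicting (P2). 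The two remaining cases of the condition are symmetric.

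With (A1)--(A3) verified, Theorem \ref{mainIntro} applied to $\{\Psi_{u}\}_{u \in U}$ produces an exceptional set $E \subset U$ with $\Hd E = 0$ such that $\mu_{u} = \pi_{u\sharp}\mu \ll \calL^{1}$ for every $u \in U\setminus E$, which is the second conclusion. The substantive machinery lives in Theorem \ref{mainIntro}; I do not expect any genuine obstacle in the present proposition beyond the Pythagorean identity underlying the first-order transversality bound, which converts the planar separation provided by (P1) into the parametric transversality required by Definition \ref{transversality}.
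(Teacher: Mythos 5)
Your proof is correct and follows essentially the same route as the paper: (A3) from \eqref{form71}, (A1) by composing projection transversality with the separation from (P1), and (A2) by lifting the non-collinearity of the planar fixed points provided by (P2). The only cosmetic differences are that you make the transversality constant explicit via the Pythagorean identity (giving $\delta = 1/\sqrt{2}$) where the paper simply invokes the standard inequality \eqref{form72}, and that you take constant sequences $\mathbf{i},\mathbf{j},\mathbf{k}$ to realise the fixed points, which is the concrete instance the paper's argument leaves implicit.
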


\begin{proof} It is easy to check (and very well-known) that the projections $\pi_{u}$ satisfy the following transversality condition for some absolute constant $\delta > 0$:
\begin{equation}\label{form72}
\max\{ |\pi_{u}(x)|, |\partial_{u}\pi_{u}(x)| \} > \delta|x|, \qquad u \in (0,2\pi), \; x \in \R^{2}.
\end{equation}
Now, we claim that $\{\Psi_{u}\}_{u \in U}$ satisfies tranversality of order $1$, according to Definition \ref{transversality}. By (P1), there exists $c > 0$ and a sequence $(n_{j})_{j \in \N}$ of natural numbers such that
\begin{displaymath}
|\Delta_{\mathbf{i},\mathbf{j}}| \geq c^{n_{j}}, \qquad \mathbf{i},\mathbf{j} \in \{1,\ldots,m\}^{n_{j}}, \: \mathbf{i} \neq \mathbf{j}.
\end{displaymath}
Note that 
\begin{displaymath}
\psi_{u,\mathbf{\mathbf{k}}}(0) = \pi_{u}(\psi_{\mathbf{k}}(0)), \qquad \mathbf{k} \in \{\mathbf{i},\mathbf{j}\}, \; u \in U,
\end{displaymath}
so $\Delta_{\mathbf{i},\mathbf{j}}(u) = \pi_{u}(\Delta_{\mathbf{i},\mathbf{j}})$ for $u \in U$. It follows from \eqref{form72} and (P1) that
\begin{align*}
\max\{|\Delta_{\mathbf{i},\mathbf{j}}(u)|, & |\Delta_{\mathbf{i},\mathbf{j}}'(u)|\} = \max\{ |\pi_{u}(\Delta_{\mathbf{i},\mathbf{j}})|, |\partial_{u} \pi_{u} (\Delta_{\mathbf{i},\mathbf{j}})| \} \geq \delta |\Delta_{\mathbf{i},\mathbf{j}}| \geq \delta c^{n_{j}}
\end{align*}
for all $j \in \N$ and distinct $\mathbf{i},\mathbf{j} \in \{1,\ldots,m\}^{n_{j}}$. By adjusting $c$ slightly, this implies \eqref{transEq} with $K = 1$, and hence assumption (A1) is satisfied.

As noted above, (A3) follows immediately from \eqref{form71}. After verifying (A2), the final claim follows directly from Theorem \ref{mainIntro}. Thus it remains to check assumption (A2). Since the fixed points of the similitudes in $\Psi$ do not share a common line, there exist three sequences $\mathbf{i},\mathbf{j},\mathbf{k} \in \{1,\ldots,m\}^{\N}$ such that $\psi_{\mathbf{i}}(0)$, $\psi_{\mathbf{j}}(0)$, and $\psi_{\mathbf{k}}(0)$ do not lie on a common line either. Then, using the relations $\psi_{u,\mathbf{i}}(0) = \pi_{u}(\psi_{\mathbf{i}}(0))$ and so on, it is easy to check that none of the three functions 
\begin{displaymath}
u \mapsto \psi_{u,\mathbf{i}}(0), \quad u \mapsto \psi_{u,\mathbf{j}}(0), \quad \text{and} \quad u \mapsto \psi_{u,\mathbf{k}}(0)
\end{displaymath} 
can be expressed as a convex combination of the two others. This gives (A2), and the proof is complete.
\end{proof}

We close the section with a few remarks on the assumptions (P1)-(P2) and (A1)-(A2).

\begin{remark} We do not know if assumption (P1) is necessary: maybe it is possible to bundle \eqref{form71} and (P1) to the single assumption that $\Hd \mu > 1$? Then, of course, (P2) would become redundant and our result would strictly generalise what Marstrand's projection theorem results in this setting. In the present circumstances, however, assumption (P2) is necessary. To see this, we apply a result of Simon and V\'ag\'o \cite{SV} concerning the projections of the standard Sierpi\'nski carpet $S$, namely the self-similar set on $\R^{2}$ generated by the homotheties
\begin{displaymath}
\left\{ \psi_{i}(x) = \frac{x}{3} + \frac{t_{i}}{3} \right\}_{i = 1}^{8},
\end{displaymath} 
where the translation vectors $t_{i}$ range in the set $\{0,1,2\} \times \{0,1,2\} \setminus \{(1,1)\}$. It is shown in \cite[Theorem 14]{SV} that if $\nu$ is the self-similar measure on $S$ determined by 
\begin{displaymath}
\nu = \sum_{i = 1}^{8} \tfrac{1}{8} \cdot \psi_{i\sharp}\nu,
\end{displaymath}
then there exists a dense $G_{\delta}$-set of directions $u \in (0,2\pi)$ such that $\pi_{u\sharp}\nu \not\ll \calL^{1}$. We note that $\pi_{u\sharp}\nu$ is again a self-similar measure on $\R$, associated to the family of similitudes 
\begin{displaymath}
\left\{\psi_{u,i}(x) = \frac{x}{3} + \frac{\pi_{u}(t_{i})}{3} \right\}_{i = 1}^{8}.
\end{displaymath}
Further, it follows from the argument of \cite[Theorem 1.6]{Ho} that for every $u \in (0,2\pi) \setminus \Q$, there exist $c > 0$ (in fact, one can take $c = 1/30$) and a sequence of natural numbers $(n_{j})_{j \in \N}$ such that
\begin{equation}\label{form78}
|\psi_{u,\mathbf{i}}(0) - \psi_{u,\mathbf{j}}(0)| \geq c^{n_{j}}, \qquad \mathbf{i},\mathbf{j} \in \{1,\ldots,8\}^{n_{j}}, \: \mathbf{i} \neq \mathbf{j}.
\end{equation}
In particular, we may find $u \in (0,2\pi)$ such that \eqref{form78} holds, and $\pi_{u\sharp}\nu \not\ll \calL^{1}$. Finally, if $\mu := \pi_{u\sharp}\nu$, for this choice of $u \in (0,2\pi)$, is viewed as a measure on $\R \times \{0\} \subset \R^{2}$, then both \eqref{form71} and (P1) are satisfied, yet all the projections of $\mu$ are evidently also singular. Of course, (P2) fails in this case, so Proposition \ref{projProp} is not contradicted.
\end{remark}

\begin{remark}
In the homogenous analogue for our main theorem, namely \cite[Theorem A]{SS}, the assumptions (A1) and (A2) are elegantly bundled into a single hypothesis, which reads as follows: for any distinct $\mathbf{i},\mathbf{j} \in \{1,\ldots,m\}^{\N}$, the map $u \mapsto \Delta_{\mathbf{i},\mathbf{j}}(u)$ is not identically zero. We prefer to avoid making this assumption, as it would limit the scope of the previous application: it would force us to assume that $\Psi$ (in Proposition \ref{projProp}) satisfies the strong separation condition. Now (P1) is satisfied under -- for example -- the open set condition.
\end{remark}

\begin{ack}
We thank Eino Rossi for valuable discussions on the topic. We are also very grateful to the referee for suggesting a number of improvements, great and small: in particular, the first version of the paper claimed to prove Theorem \ref{mainIntro} in a framework, where also the contraction parameters $\lambda_{1},\ldots,\lambda_{m}$ are allowed to depend on $u$. As the referee pointed out, however, we had cut a corner in the proof of Proposition \ref{fourierDecayLemma}, and the more general version of Theorem \ref{mainIntro} remains open.
\end{ack}

\section{A model of random measures}
As we explained in Section \ref{comparisonSection}, a main hurdle in proving our main theorem is the fact that non-homogeneous self-similar measures do not have an "infinite convolution" structure. However, by the results in \cite{GSSY}, a non-homogeneous self-similar measure can, nonetheless, be expressed as an average of certain "statistically self-similar" random measures with an infinite convolution structure. We will need all the details of this decomposition, and they will now be thoroughly explained for the reader's convenience.

\subsection{An abstract random model}\label{randomModel}
Let $\calT$ be a finite index set; we will often refer to elements $\tau \in \calT$ as \emph{types}. To every $\tau \in \calT$, we associate a list of equicontractive similitudes on $\R^{d}$:
\begin{equation}\label{form3}
\Psi(\tau) := (\psi_{1}^{\tau},\ldots,\psi_{m(\tau)}^{\tau}) = (\lambda(\tau)x + t_{1}(\tau),\ldots,\lambda(\tau) + t_{m(\tau)}(\tau)),
\end{equation}
where $t_{j}(\tau) \in \R^{d}$, $\lambda(\tau) \in (0,1)$, and $m(\tau) \geq 1$. We emphasise that the contraction ratios $\lambda(\tau) \in (0,1)$ are allowed to depend on $\tau$, but they are constant within each individual family $\Psi(\tau)$. Also, repetitions are allowed: a single similitude may appear with multiple different indices in $\Psi(\tau)$. We also allow $m(\tau) = 1$ for one or more $\tau \in \calT$.

To each $\tau \in \calT$, we associate the following discrete measure:
\begin{equation}\label{form4}
\eta(\tau) := \frac{1}{m(\tau)} \sum_{j = 1}^{m(\tau)}\delta_{\psi_{j}^{\tau}(0)} = \frac{1}{m(\tau)} \sum_{j = 1}^{m(\tau)} \delta_{t_{j}(\tau)}.
\end{equation} 
Finally, to every type $\tau \in \calT$ we associate some probability $q(\tau) \in (0,1)$ such that
\begin{displaymath}
\sum_{\tau \in \calT} q(\tau) = 1.
\end{displaymath}

Next, we let $\Omega := \calT^{\N}$, and we let $\tn$ be the usual product probability (Bernoulli) measure on $\Omega$ determined by the probabilities $q(\tau)$. For each $\omega = (\omega_{1},\omega_{2},\ldots) \in \Omega$, we then associate the following infinite convolution:
\begin{equation}\label{form42}
\eta^{\omega} := \Asterisk_{n \geq 1} \eta^{\omega}_{n} := \Asterisk_{n \geq 1} \bigg[ \prod_{j = 1}^{n - 1} \lambda(\omega_{j}) \bigg]_{\sharp}\eta(\omega_{n}).
\end{equation} 
Here $r_{\sharp}\nu$, $r > 0$, stands for the push-forward of $\nu \in \calM(\R^{d})$ under the dilation $x \mapsto r x$ and $\calM(X) = \{\mu : \mu \text{ is a non-trivial Radon measure on } X \text{ with compact support}\}$. To get an idea of what is happening here, consider the following. If all the families $\Psi(\tau)$ were the same, $\Psi(\tau) \equiv \Psi$, and in particular $\lambda(\tau) \equiv \lambda$, then \eqref{form42} would simply give the usual self-similar measure generated by $\Psi$. To get an intuition of the general case, we refer to Figure \ref{fig1}.
  \begin{figure}[t!]
  \begin{center}
    \begin{tikzpicture}[scale=0.35]
      \draw (6,6) circle (6);
      \draw [dashed] (5,3) circle (2);
      \filldraw (5,3) circle (0.1);
      \draw [dashed] (9,7) circle (2);
      \filldraw (9,7) circle (0.1);
      \draw [dashed] (4,9) circle (2);
      \filldraw (4,9) circle (0.1);
      
      \draw [dotted] (4,9) -- (6,9);
      \node[align=center] at (7.7,10.5) {$\lambda(\omega_1)$};
      \draw [->] (6.2,10.5) arc (100:165:1.5);

      \node[align=center] at (2,5.6) {$\eta_1^\omega$};
      \draw [decoration={markings,mark=at position 1.0 with
        {\arrow[scale=1.5,>=stealth]{>}}},postaction={decorate}] (3,6) .. controls (3.8,7) .. (4,8.7);
      \draw [decoration={markings,mark=at position 1.0 with
        {\arrow[scale=1.5,>=stealth]{>}}},postaction={decorate}] (3,5.6) .. controls (6,6) .. (8.7,7);
      \draw [decoration={markings,mark=at position 1.0 with
        {\arrow[scale=1.5,>=stealth]{>}}},postaction={decorate}] (3,5.2) .. controls (4,4.2) .. (4.8,3.2);

      \draw [decoration={markings,mark=at position 1.0 with
        {\arrow[scale=1.5,>=stealth]{>}}},postaction={decorate}] (14,6) .. controls (16,6.5) .. (18,6);

      \draw (26,6) circle (6);
      \draw [dashed] (25,3) circle (2);
      \filldraw (24.4,2.4) circle (0.1);
      \filldraw (25.6,2.4) circle (0.1);
      \filldraw (24.4,3.6) circle (0.1);
      \filldraw (25.6,3.6) circle (0.1);
      \draw [dashed] (29,7) circle (2);
      \filldraw (28.4,6.4) circle (0.1);
      \filldraw (29.6,6.4) circle (0.1);
      \filldraw (28.4,7.6) circle (0.1);
      \filldraw (29.6,7.6) circle (0.1);
      \draw [dashed] (24,9) circle (2);
      \filldraw (23.4,8.4) circle (0.1);
      \filldraw (24.6,8.4) circle (0.1);
      \filldraw (23.4,9.6) circle (0.1);
      \filldraw (24.6,9.6) circle (0.1);
      \node[align=center] at (22,5.6) {$\eta_2^\omega$};

    \end{tikzpicture}
    \caption{The transition from $\eta^{\omega}_{1}$ to $\eta^{\omega}_{2}$ in \eqref{form42}.}
    \label{fig1}
  \end{center}
  \end{figure}
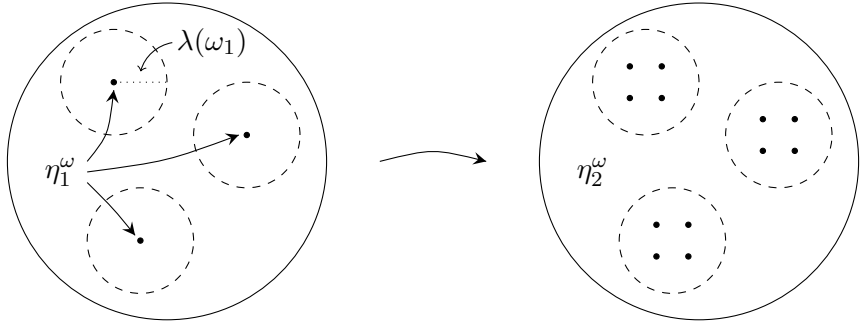
Now, the triple $(\Omega,\{\eta^{\omega}\}_{\omega \in \Omega},\tn)$ is a probability space of "statistically self-similar" measures.

\subsection{Disintegration of self-similar measures}\label{disintegration}
The random measures introduced above are mostly a tool in this paper; eventually, we are interested in deterministic self-similar measures. We now explain the connection, but the reader may wish to take a look at Proposition \ref{disintegrationProp} to see where we are headed. Let 
\begin{displaymath}
m \geq 2, \quad \lambda_{1},\ldots,\lambda_{m} \in (0,1), \quad \text{and} \quad t_{1},\ldots,t_{m} \in \R^{d}.
\end{displaymath}
Let $\Psi$ be the corresponding list of homotheties:
\begin{equation} \label{eq:ss-measure-def}
\Psi := (\psi_{1},\ldots,\psi_{m}) = (\lambda_{1}x + t_{1},\ldots,\lambda_{m}x + t_{m}).
\end{equation}
To each $j \in \{1,\ldots,m\}$ we further associate a probability $p_{j} \in (0,1)$ such that $\sum p_{j} = 1$. Then, there exists a unique probability measure $\mu$ on $\R^{d}$ satisfying the relation
\begin{displaymath}
\mu = \sum_{j = 1}^{m} p_{j} \cdot \psi_{j\sharp}\mu.
\end{displaymath}
Writing $\mathbf{p} = (p_{1},\ldots,p_{m})$, we call $\mu$ the \emph{self-similar measure associated to the pair $(\Psi,\mathbf{p})$.} Now, we relate the measure $\mu$ to the random measures discussed in the previous section. Fix an integer $N \geq 1$, and write
\begin{equation}\label{form48}
\calT := \calT^{N} := \{(N_{1},\ldots,N_{m}) \in \N^{m} : N_{1} + \cdots + N_{m} = N\}.
\end{equation}
The elements of $\calT$ should be understood as the \emph{types} from the previous section, and $N \geq 1$ should be understood as a free parameter, whose role will be clarified much later. We next define the probabilities $q(\tau)$, $\tau \in \calT$, and eventually the lists $\Psi(\tau)$, $\tau \in \calT$. Recall that $m \in \N$ was the cardinality of the family $\Psi$. We say that an $N$-sequence $(n_{1},\ldots,n_{N}) \in \{1,\ldots,m\}^{N}$ has type
\begin{displaymath}
\tau(n_{1},\ldots,n_{N}) = (N_{1},\ldots,N_{m}) \in \calT
\end{displaymath} 
if $k$ appears in the sequence $N_{k}$ times for all $1 \leq k \leq m$. The formula above defines a map $\tau \colon \{1,\ldots,m\}^{N} \to \calT$. 

\begin{ex}
If $m = 3$ and $N = 4$, then $\tau(1,2,1,2) = (2,2,0)$.
\end{ex}

Recalling the probabilities $p_{1},\ldots,p_{m}$ from above, we define the probabilities for each type in $\calT$ as follows:
\begin{equation} \label{eq:q-def}
q(N_{1},\ldots,N_{m}) := \mathop{\sum_{(n_{1},\ldots,n_{N}) \in \{1,\ldots,m\}^{N}}}_{\tau(n_{1},\ldots,n_{N}) = (N_{1},\ldots,N_{m})} p_{n_{1}} \cdots p_{n_{N}} = m(N_{1},\ldots,N_{m})p_{1}^{N_{1}} \cdots p_{m}^{N_{m}}.
\end{equation}
Here $m(\tau)$ is the number of $N$-sequences with type $\tau$. We used the fact that the value of the product $p_{n_{1}} \cdots p_{n_{N}}$ only depends on the type of the sequence $(n_{1},\ldots,n_{N})$. Clearly
\begin{displaymath}
\sum_{\tau \in \calT} q(\tau) = 1.
\end{displaymath}

Finally, it is time to define the lists $\Psi(\tau)$ for $\tau \in \calT$. Recall that $N \geq 1$ is a fixed parameter. For a type $\tau \in \calT$, we define the list
\begin{displaymath}
\Psi(\tau) := \Psi^{N}(\tau) := (\psi_{n_{1}} \circ \cdots \circ \psi_{n_{N}} : \tau(n_{1},\ldots,n_{N}) = \tau).
\end{displaymath}
Note that a single similitude may appear several times in this list, so in general
\begin{displaymath}
\Psi(\tau) \neq \{\psi_{n_{1}} \circ \cdots \psi_{n_{N}} : \tau(n_{1},\ldots,n_{N}) = \tau\},
\end{displaymath}
unless one interprets the right hand side as a multiset. It is nevertheless convenient to write "$\varphi \in \Psi(\tau)$"; this simply means that $\varphi$ appears at least once in the sequence $\Psi(\tau)$. 

A key point of the definition of $\Psi(\tau)$ is that all the similitudes in $\Psi(\tau)$ now have the same contraction ratio. More precisely, if $\tau = (N_{1},\ldots,N_{m}) \in \calT$ and $\varphi \in \Psi(\tau)$, then
\begin{equation}\label{form43}
\lambda(\varphi)  = \lambda_{1}^{N_{1}} \cdots \lambda_{m}^{N_{m}} =: \lambda(\tau).
\end{equation}
Thus, the lists $\Psi(\tau)$, $\tau \in \calT$, indeed have the form \eqref{form3}. With this in mind, the general framework from the previous section is applicable, and it yields the discrete measures $\eta(\tau)$ as in \eqref{form4}, the infinite convolutions $\eta^{\omega}$, $\omega \in \Omega = \calT^{\N}$, as in \eqref{form42}, and the measure $\tn$ derived from the probabilities $q(\tau)$, $\tau \in \calT$. Further, the self-similar measure $\mu$ is related to the measures $\eta^{\omega}$ via the following disintegration formula:

\begin{proposition}\label{disintegrationProp}
With the notation above, we have
\begin{displaymath}
\mu = \int_{\Omega} \eta^{\omega} \dd\tn(\omega).
\end{displaymath}
\end{proposition}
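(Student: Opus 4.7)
The plan is to iterate the self-similarity relation $\mu = \sum_{j=1}^{m} p_{j} \psi_{j\sharp}\mu$ exactly $N$ times and regroup the $m^{N}$ resulting summands by type. For any sequence $(n_{1},\ldots,n_{N})$ of type $\tau = (N_{1},\ldots,N_{m})$, the composition $\psi_{n_{1}} \circ \cdots \circ \psi_{n_{N}}$ is a similitude with contraction ratio $\lambda(\tau)$ by \eqref{form43} and with translation $\psi_{n_{1}} \circ \cdots \circ \psi_{n_{N}}(0)$, which is by definition one of the points $t_{i}(\tau)$ listed in $\Psi(\tau)$. Since the weight $p_{n_{1}} \cdots p_{n_{N}} = p_{1}^{N_{1}} \cdots p_{m}^{N_{m}}$ depends only on the type, using $(\psi^{\tau}_{i})_{\sharp}\mu = \delta_{t_{i}(\tau)} * [\lambda(\tau)]_{\sharp}\mu$ together with the definition of $q(\tau)$ in \eqref{eq:q-def} should collapse the sum to the one-step identity
\begin{equation*}
\mu = \sum_{\tau \in \calT} q(\tau) \, \eta(\tau) * [\lambda(\tau)]_{\sharp}\mu.
\end{equation*}
This is the conceptual heart of the argument: it plays the role of a ``type-level'' self-similar equation that mirrors the shift-factorisation $\eta^{\omega} = \eta_{1}^{\omega} * [\lambda(\omega_{1})]_{\sharp}\eta^{\sigma\omega}$ on the random side.

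I would then iterate this identity $k$ times by substituting it back into itself on the right. A short computation using $[r]_{\sharp}(\nu_{1} * \nu_{2}) = [r]_{\sharp}\nu_{1} * [r]_{\sharp}\nu_{2}$ yields
\begin{equation*}
\mu = \sum_{(\tau_{1},\ldots,\tau_{k}) \in \calT^{k}} q(\tau_{1}) \cdots q(\tau_{k}) \, \biggl(\Asterisk_{j=1}^{k} \Bigl[\prod_{l<j} \lambda(\tau_{l}) \Bigr]_{\sharp}\eta(\tau_{j}) \biggr) * \Bigl[\prod_{l=1}^{k} \lambda(\tau_{l})\Bigr]_{\sharp}\mu,
\end{equation*}
which is precisely the integral against $\tn$ of the $k$-fold partial convolution from \eqref{form42}, additionally convolved with a rescaled copy of $\mu$.

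Finally I would pass to the limit $k \to \infty$. Since $\E_{\tn}[\log\lambda(\tau)] = N\sum_{j} p_{j}\log\lambda_{j} < 0$, the strong law of large numbers gives $\prod_{l=1}^{k} \lambda(\omega_{l}) \to 0$ exponentially fast for $\tn$-a.e.\ $\omega$; hence the rescaled factor $[\prod_{l}\lambda(\omega_{l})]_{\sharp}\mu$ converges weakly to $\delta_{0}$, while the partial convolutions converge weakly to $\eta^{\omega}$ since the discarded tail has support of diameter comparable to $\prod_{l \leq k}\lambda(\omega_{l})$. The only delicate point is interchanging the limit with the $\tn$-integral, but this is routine weak-$*$ dominated convergence: all measures in sight are supported in one fixed compact set and carry unit mass. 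An alternative finish, should the interchange be awkward, is to observe that $\nu := \int_{\Omega}\eta^{\omega}\dd\tn(\omega)$ satisfies the same operator equation $\nu = \sum_{\tau} q(\tau)\,\eta(\tau) * [\lambda(\tau)]_{\sharp}\nu$ (via $\eta^{\omega} = \eta_{1}^{\omega} * [\lambda(\omega_{1})]_{\sharp}\eta^{\sigma\omega}$ and the Bernoulli structure of $\tn$), and then to invoke uniqueness of the fixed point of the corresponding contraction on compactly supported probability measures.
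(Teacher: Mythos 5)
Your proposal is correct. Your main line of attack is genuinely different from the paper's: you iterate the self-similarity relation to obtain the type-level identity $\mu = \sum_{\tau}q(\tau)\,\eta(\tau)*[\lambda(\tau)]_{\sharp}\mu$, substitute it into itself $k$ times, and pass to the limit $k\to\infty$ using weak-$*$ dominated convergence. The paper instead runs the computation in the opposite direction: it starts from $\nu := \int_{\Omega}\eta^{\omega}\dd\tn(\omega)$, peels off the first coordinate via $\eta^{\omega}=\eta(\omega_{1})*\lambda(\omega_{1})_{\sharp}\eta^{T(\omega)}$ and the Bernoulli structure of $\tn$, and shows that $\nu$ satisfies the $N$-fold iterated self-similarity equation $\nu=\sum_{(n_{1},\ldots,n_{N})}p_{n_{1}}\cdots p_{n_{N}}(\psi_{n_{1}}\circ\cdots\circ\psi_{n_{N}})_{\sharp}\nu$; uniqueness of the self-similar measure then forces $\nu=\mu$. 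The paper's route avoids any limiting argument and is a few lines shorter; your route is more constructive and makes visible how the partial convolutions of $\eta^{\omega}$ emerge from unrolling the iterated function system. Note that your ``alternative finish'' at the end \emph{is} essentially the paper's argument, so you have in effect described both. One small inefficiency: you do not need the strong law of large numbers to conclude $\prod_{l\le k}\lambda(\omega_{l})\to 0$ — this holds deterministically, since $\lambda(\tau)\le\lambda_{\max}<1$ for every $\tau$ in the finite set $\calT$, so the rescaled copy of $\mu$ shrinks to a point for \emph{every} $\omega\in\Omega$, not just for $\tn$-a.e.\ $\omega$.
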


\begin{proof}
  Although the proof can be found in \cite[Lemma 6.2]{SSS} or \cite[(55)]{GSSY}, we give the details for the convenience of the reader. Let $T$ be the left shift on $\Omega$ defined by $T(\omega_1,\omega_2,\ldots) = (\omega_2,\omega_3,\ldots)$. By \eqref{form42}, \eqref{form4}, and the fact that $\tn$ is the product probability measure on $\Omega$ determined by the probabilities $q(\tau)$, $\tau \in \calT$, defined in \eqref{eq:q-def}, we have
  \begin{align*}
    \int_\Omega \eta^\omega \dd\tn(\omega) &= \int_\Omega \eta(\omega_1) \ast \lambda(\omega_1)_\sharp\eta^{T(\omega)}\dd\tn(\omega) \\
    &= \int_\Omega \frac{1}{m(\omega_1)} \sum_{j=1}^{m(\omega_1)} \delta_{t_j(\omega_1)} \ast \lambda(\omega_1)_\sharp \eta^{T(\omega)} \dd\tn(\omega) \\
    &= \int_\Omega \frac{1}{m(\omega_1)} \sum_{j=1}^{m(\omega_1)} \psi_{j\sharp}^{\omega_1} \eta^{T(\omega)} \dd\tn(\omega) \\
    &= \sum_{\tau \in \calT} q(\tau) \int_\Omega \frac{1}{m(\tau)} \sum_{j=1}^{m(\tau)} \psi^\tau_{j\sharp} \eta^\omega \dd\tn(\omega) \\
    &= \sum_{\tau \in \calT} \frac{q(\tau)}{m(\tau)} \sum_{j=1}^{m(\tau)} \psi^\tau_{j\sharp} \int_\Omega \eta^\omega \dd\tn(\omega) \\
    &= \sum_{(n_1,\ldots,n_N) \in \{1,\ldots,m\}^N} p_{n_1} \cdots p_{n_N} \cdot (\psi_{n_1} \circ \cdots \circ \psi_{n_N})_\sharp \int_\Omega \eta^\omega \dd\tn(\omega).
  \end{align*}
  The proof is finished by the uniqueness of the self-similar measure; recall \eqref{eq:ss-measure-def}.
\end{proof}

\section{Fourier dimension estimates}
In this section, we will work with the following hypotheses:

\begin{definition}\label{FourierDecayAss}
Let $\calT$ be a (finite) collection of types as in Section \ref{randomModel}, and let $U \subset \R$ be an open interval. For each $u \in U$ and $\tau \in \calT$, associate a family of similitudes of the form
\begin{equation}\label{form45}
\Psi_{u}(\tau) = (\psi_{1}^{u},\ldots,\psi_{m(\tau)}^{u}) = (\lambda(\tau)x + t_{1}(\tau,u),\ldots,\lambda(\tau)x + t_{m(\tau)}(\tau,u)),
\end{equation}
where
\begin{displaymath}
m(\tau) \geq 1, \quad \lambda(\tau) \in (0,1), \quad \text{and} \quad t_{1}(\tau,u),\ldots,t_{m(\tau)}(\tau,u) \in \R.
\end{displaymath}
Note that since $\mathcal{T}$ is finite, we automatically have $\min \{\lambda(\tau) : \tau \in \calT \}  > 0$ and $\max \{\lambda(\tau) : \tau \in \calT \} < 1$. We will assume that, for fixed $\tau \in \calT$ and $1 \leq j \leq m(\tau)$, the map
\begin{displaymath}
u \mapsto t_{j}(\tau,u), \qquad u \in U,
\end{displaymath}
is real-analytic, and we assume that $\sup |t_{j}(\tau,u)| < \infty$, where the $\sup$ runs over $\tau \in \mathcal{T}$, $1 \leq j \leq m(\tau)$, and $u \in U$.
\end{definition}

For the remainder of this section, we fix a collection of types $\calT$, an open interval $U \subset \R$, and families of similitudes $\Psi_{u}(\tau)$, $(u,\tau) \in U \times \calT$, as in \eqref{form45}, satisfying the assumptions of Definition \ref{FourierDecayAss}. We also fix probabilities $q(\tau) \in (0,1)$, $\tau \in \calT$, such that $\sum_{\tau \in \calT} q(\tau) = 1$. Given these parameters, we follow the construction in Section \ref{randomModel} to generate the probability space $(\Omega,\tn)$ \textbf{which is independent of $u$} and also the measures
\begin{displaymath}
\eta_{u}(\tau), \: \tau \in \calT, \quad \text{and} \quad \eta^{\omega}_{u}, \: \omega \in \Omega.
\end{displaymath} 
We recall the following explicit formula for the measures $\eta_{u}(\tau)$:
\begin{displaymath}
\eta_{u}(\tau) = \frac{1}{m(\tau)} \sum_{j = 1}^{m(\tau)} \delta_{t_{j}(\tau,u)}.
\end{displaymath} 
We are interested in the Fourier transforms of the measures $\eta_{u}^{\omega}$. Recall that if $\nu \in \calM(\R^{d})$, then
\begin{displaymath}
\Fd \nu := \sup\{s \in [0,d] : \exists \: C > 0 \text{ such that } |\hat{\nu}(\xi)| \leq C|\xi|^{-s/2} \text{ for all } \xi \in \R^{d} \setminus \{0\}\}.
\end{displaymath}
Here is the main result of this section:

\begin{proposition}\label{fourierDecayProp}
Assume that there exists $\tau \in \calT$, and three indices $1 \leq i_{1} < i_{2} < i_{3} \leq m(\tau)$ such that $u \mapsto t_{i_{3}}(\tau,u) - t_{i_{1}}(\tau,u)$ is not identically zero, and
\begin{equation}\label{form47}
u \mapsto \frac{t_{i_{2}}(\tau,u) - t_{i_{1}}(\tau,u)}{t_{i_{3}}(\tau,u) - t_{i_{1}}(\tau,u)}, \qquad u \in U,
\end{equation}
is non-constant. Then, there exists a set $G \subset \Omega$ with $\tn(G) = 1$ such that if $\omega \in G$, then
\begin{equation}\label{form6}
\Hd \{u \in U : \Fd \eta_{u}^{\omega} = 0\} = 0.
\end{equation}  
\end{proposition}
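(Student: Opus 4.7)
Let $\tau_0 := \tau$ be the type provided by the hypothesis and set
\[
\phi_u(\xi) := \widehat{\eta_u(\tau_0)}(\xi) = \frac{1}{m(\tau_0)}\sum_{j=1}^{m(\tau_0)} e^{-2\pi i \xi t_j(\tau_0, u)}.
\]
By the convolution identity applied to \eqref{form42}, the Fourier transform factorises as
\[
\widehat{\eta_u^\omega}(\xi) \;=\; \prod_{n=1}^{\infty} \widehat{\eta_u(\omega_n)}\bigl(\xi\,\Lambda_n(\omega)\bigr), \qquad \Lambda_n(\omega) := \prod_{j=1}^{n-1} \lambda(\omega_j),
\]
and since each factor has modulus at most $1$, I would restrict attention to the subsequence of indices $n_1 < n_2 < \cdots$ with $\omega_{n_k} = \tau_0$. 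By the strong law of large numbers for the Bernoulli product $\tn$, there is a set $G \subset \Omega$ of full $\tn$-measure on which $n_k \sim k/q(\tau_0)$ and $-\log \Lambda_{n_k}(\omega) \sim k\chi/q(\tau_0)$, where $\chi := -\sum_\tau q(\tau)\log\lambda(\tau) > 0$. The plan is to fix $\omega \in G$ and transfer Fourier decay of the deterministic factors $\phi_u$ into Fourier decay of $\widehat{\eta_u^\omega}$.

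The technical heart is a transversality estimate for $\phi_u$. Writing $a(u) := t_{i_2}(\tau_0,u) - t_{i_1}(\tau_0,u)$ and $b(u) := t_{i_3}(\tau_0,u) - t_{i_1}(\tau_0,u)$, the hypothesis guarantees $b \not\equiv 0$ and that $a/b$ is a non-constant real-analytic function on $U$. Factoring out the phase $e^{-2\pi i \xi t_{i_1}(\tau_0,u)}$ and isolating the three distinguished terms, a short calculation shows that $|\phi_u(\xi)| \geq 1 - \delta$ forces the pair $(\xi a(u), \xi b(u))$ to lie within distance $O(\sqrt{\delta})$ of $\Z^2$. Real-analyticity of $(a,b)$ and non-constancy of $a/b$ then produce, via a \L ojasiewicz-type inequality, an estimate of the form
\begin{equation}\label{eq:plan-key}
\calL^1\bigl(\{u \in I : |\phi_u(\xi)| \geq \rho\}\bigr) \;\leq\; C|\xi|^{-c_0}, \qquad |\xi| \geq R_0,
\end{equation}
for fixed $\rho \in (0,1)$ and $c_0 > 0$, on every compact subinterval $I \Subset U$.

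The final step is an Erd\H{o}s--Kahane style covering argument in the spirit of \cite{SS, SSS}. For $s > 0$ small, define
\[
\mathcal{B}_R(s) := \bigl\{u \in U : \exists\, \xi \in [R, 2R] \text{ with } |\widehat{\eta_u^\omega}(\xi)| \geq R^{-s/2}\bigr\}.
\]
Taking logarithms in the product and using $|\phi_u| \leq 1$, if $u \in \mathcal{B}_R(s)$ then at most $O(s \log R)$ of the $\asymp q(\tau_0)\chi^{-1}\log R$ relevant indices $k$ can satisfy $|\phi_u(\xi\Lambda_{n_k}(\omega))| < \rho$; hence at least a fraction $1 - O(s)$ of the geometrically spaced scales $\xi_k := \xi \Lambda_{n_k}(\omega)$ fall in the bad set of \eqref{eq:plan-key}. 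Discretising $\xi \in [R, 2R]$ at spacing $\sim 1/R$ (justified by the Lipschitz dependence of $\widehat{\eta_u^\omega}(\xi)$ in $\xi$), estimating each bad-scale event with \eqref{eq:plan-key}, and combining with a Frostman-type pigeonholing over the bad-scale patterns, I would obtain
\[
\Hd\Bigl(\bigcap_{N} \bigcup_{R \geq N}\mathcal{B}_R(s)\Bigr) \leq h(s),
\]
with $h(s) \to 0$ as $s \to 0$. Since $\{u : \Fd \eta_u^\omega = 0\} \subseteq \bigcap_N \bigcup_{R \geq N}\mathcal{B}_R(s)$ for every $s > 0$, a countable intersection over $s = 1/j$, $j \in \N$, yields \eqref{form6}.

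The principal obstacle is the transversality estimate \eqref{eq:plan-key} with an exponent $c_0 > 0$ \emph{independent of $\xi$}: one must convert the qualitative hypothesis on the ratio \eqref{form47} into a quantitative bound that holds uniformly in the frequency. This is precisely where the three-atom assumption, together with real-analyticity of $u \mapsto t_j(\tau,u)$, must be used in full force; once \eqref{eq:plan-key} is in hand, the subsequent Erd\H{o}s--Kahane covering is technical but follows established lines.
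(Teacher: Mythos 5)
Your overall scaffolding matches the paper's: pass to the Fourier side, single out the three distinguished atoms of $\eta_u(\tau_0)$ so that the non-constancy of $(t_{i_2}-t_{i_1})/(t_{i_3}-t_{i_1})$ can be exploited, restrict to the subsequence of $\tau_0$-blocks using the law of large numbers, and close with a Frostman/Borel--Cantelli argument. However, the central estimate \eqref{eq:plan-key} is not correct as stated, and this is a genuine gap, not a technicality. For a fixed threshold $\rho$ close to $1$, the condition $|\phi_u(\xi)|\geq\rho$ forces $\|\xi a(u)\|$ and $\|\xi b(u)\|$ to be $O(\sqrt{1-\rho})$, and the Lebesgue measure of the corresponding $u$-set is comparable to $(1-\rho)$ (one degree of smallness from each of the two constraints, but divided by the Jacobian $\sim|\xi|$ and multiplied by the $\sim|\xi|$ candidate lattice points); it does \emph{not} decay as $|\xi|\to\infty$. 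A \L ojasiewicz inequality governs how the level $\rho$ enters, not the frequency $\xi$, so no exponent $c_0>0$ independent of $\xi$ can hold. With the correct bound $\lesssim (1-\rho)$ per scale, the subsequent ``union over bad-scale patterns'' would require the events $\{|\phi_u(\xi\Lambda_{n_k})|\geq\rho\}$ to behave as if nearly independent across $k$, which they emphatically are not: the integer approximations $K_k\approx \xi\Lambda_{n_k}a(u)$ at consecutive scales are rigidly linked through the known ratios $\Lambda_{n_{k+1}}/\Lambda_{n_k}$.

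This is precisely what the paper's Erd\H{o}s--Kahane lemma (Lemma~\ref{fourierDecayLemma}) supplies and what your outline omits. Rather than a per-frequency measure estimate, the paper records the nearest integers $K_m$ to $\Theta_m z_1\nu$ and $L_m$ to $\Theta_m z_2\nu$, derives the recursion (essentially $K_{m+2}\approx K_{m+1}(K_{m+1}/K_m)^{\beta(W_{M-(m+1)})/\beta(W_{M-m})}$, cf.\ \eqref{form13}) showing that $(K_{m+2},L_{m+2})$ is \emph{determined} by its two predecessors at all but $O(\delta M)$ indices (Lemma~\ref{lemma1}), and then \emph{counts} the admissible sequences $(K_m,L_m)_{m=1}^M$: there are only $\exp(O(\log(1/\delta)\,\delta M))$ of them, each producing a candidate ratio $K_M/L_M$ that localises $z_1/z_2=\zeta(u)$ to an interval of length $\lesssim\lambda_{\max}^M$. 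It is this counting of integer sequences, not a measure bound in $\xi$, that yields summability of $\sigma(\tilde E_{M,\delta})$ and hence, via Borel--Cantelli, the conclusion. To repair your argument you would need to replace \eqref{eq:plan-key} and the ``pigeonholing over bad-scale patterns'' by an analysis of the $(K_m,L_m)$ recursion and the resulting covering of the ratio set $\zeta(I)$, as in Lemma~\ref{fourierDecayLemma}.
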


To get started, we compute an explicit expression for the Fourier transform $\widehat{\eta_{u}^{\omega}}$. Recall that for all $r > 0$,
\begin{equation}\label{form46}
\widehat{r_{\sharp}\nu}(\xi) = \hat{\nu}(r \xi), \qquad \nu \in \calM(\R^{d}), \: \xi \in \R^{d}.
\end{equation}
For brevity, we write
\begin{equation}\label{form10}
\lambda(\omega|_{n}) := \prod_{j = 1}^{n} \lambda(\omega_{j}), \qquad n \geq 0,
\end{equation}
where $\omega|_{n} := (\omega_{1},\ldots,\omega_{n})$ is the initial segment of $\omega$ of length $n$. In particular, recalling \eqref{form42},
\begin{displaymath}
\eta_{u}^{\omega} = \Asterisk_{n \geq 1} [\lambda(\omega|_{n - 1})_{\sharp}\eta_{u}(\omega_{n})].
\end{displaymath}
Then, by \eqref{form46},
\begin{align}
\widehat{\eta_{u}^{\omega}}(\xi) & = \prod_{n \geq 1} \widehat{\eta_{u}(\omega_{n})}(\lambda(\omega|_{n - 1})\xi) \notag\\
&\label{form5} = \prod_{n \geq 1} \frac{1}{m(\omega_{n})} \sum_{j = 1}^{m(\omega_n)} \exp\left(-2\pi i \lambda(\omega|_{n - 1})t_{j}(\omega_{n},u)\xi\right).
\end{align}
Now that we have the formula \eqref{form5} at hand, we record a continuity property for $\widehat{\eta_{u}^{\omega}}$, which will be needed to verify the measureabilty of sets defined via $\Fd \eta^{\omega}_{u}$.
\begin{lemma} For $\xi \in \R$ fixed, the map $(\omega,u) \mapsto \widehat{\eta_{u}^{\omega}}(\xi)$ is continuous. \end{lemma}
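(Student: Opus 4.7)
The plan is to exploit the explicit product formula \eqref{form5} and argue that the infinite product converges uniformly on $\Omega \times U$ to a continuous function. The proof will have three short steps.

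First I would verify that the individual factor
\[
f_{n}(\omega,u) := \frac{1}{m(\omega_{n})} \sum_{j=1}^{m(\omega_n)} \exp\bigl(-2\pi i \lambda(\omega|_{n-1}) t_{j}(\omega_{n},u) \xi\bigr)
\]
is continuous on $\Omega \times U$ for each fixed $n$ and $\xi$. Continuity in $u$ is immediate from the real-analyticity of $u \mapsto t_{j}(\tau,u)$ assumed in Definition \ref{FourierDecayAss}. For the dependence on $\omega$, note that $f_{n}$ depends on $\omega$ only through the initial segment $\omega|_{n} = (\omega_{1},\ldots,\omega_{n})$; since the product topology on $\Omega = \calT^{\N}$ makes any cylinder function locally constant in $\omega$, the map $\omega \mapsto f_{n}(\omega,u)$ is continuous for each $u$. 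Joint continuity on $\Omega \times U$ follows at once because $f_{n}$ is essentially a continuous function of $u$ parametrised by a locally constant choice of data.

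Next I would establish a uniform geometric bound $|f_{n}(\omega,u) - 1| \leq C_{\xi}\Lambda^{n-1}$, where $\Lambda := \max_{\tau \in \calT} \lambda(\tau) < 1$ (finite because $\calT$ is finite). Using $|e^{i\theta}-1| \leq |\theta|$, the triangle inequality inside the average, the bound $\lambda(\omega|_{n-1}) \leq \Lambda^{n-1}$ from \eqref{form10}, and the global bound $M := \sup_{\tau,j,u}|t_{j}(\tau,u)| < \infty$ from Definition \ref{FourierDecayAss}, one obtains the claim with $C_{\xi} = 2\pi M |\xi|$.

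Since $\sum_{n} \Lambda^{n-1} < \infty$, the infinite product $\prod_{n \geq 1} f_{n}$ converges absolutely and uniformly on $\Omega \times U$. Each finite partial product is continuous by the first step, and a uniform limit of continuous functions is continuous, so $(\omega,u) \mapsto \widehat{\eta_{u}^{\omega}}(\xi) = \prod_{n \geq 1} f_{n}(\omega,u)$ is continuous. The only point that deserves a moment's thought — rather than being a genuine obstacle — is that the product topology on $\Omega$ is coarse enough that cylinder functions are continuous, so the finiteness of $\calT$ is used twice: once to make $\Lambda < 1$ (giving uniform tail control) and once (combined with the hypothesis on $t_{j}$) to make $M$ finite.
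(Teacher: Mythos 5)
Your proof is correct and follows essentially the same route as the paper's: both hinge on the uniform factor-wise bound $|f_{n}(\omega,u)-1| \le 2\pi M|\xi| \Lambda^{n-1}$ (the paper calls the constants $t_{\sup}$ and $\lambda_{\sup}$) and on the fact that each $f_{n}$ is continuous because it only depends on the initial segment $\omega|_{n}$, which is locally constant in the product topology. The paper phrases the conclusion via an explicit $\epsilon$--$N$--$\delta$ cross-term estimate splitting $\prod_{\le N}$ and $\prod_{>N}$, whereas you invoke uniform convergence of the product and pass the continuity through the uniform limit; these are two presentations of the same argument.
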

\begin{proof} The statement concerns the product topology on $\Omega \times U$, so convergence $(\omega^{j},u^{j}) \to (\omega,u)$ means that for any $N \in \N$ and $\delta > 0$, the following holds for $j \in \N$ sufficiently large:
\begin{equation}\label{form79} \omega^{j}|_{N} = \omega|_{N} \quad \text{and} \quad |u^{j} - u| < \delta. \end{equation}
Fix $\epsilon > 0$. We now claim that the difference $|\widehat{\eta^{\omega_{j}}_{u_{j}}}(\xi) - \widehat{\eta^{\omega}_{u}}(\xi)|$ can be made smaller than $\epsilon$ by first choosing $N$ in \eqref{form79} large enough, depending on $\epsilon,\xi$, and then $\delta$ small enough, depending on $\epsilon,N$. To see this, write first
\begin{displaymath} \widehat{\eta^{\omega_{j}}_{u_{j}}}(\xi) = \prod_{n = 1}^{N} F(\omega^{j}_{n},u_{j},\xi) \times \prod_{n = N + 1}^{\infty} F(\omega^{j}_{n},u_{j},\xi) =: \Pi_{\leq N}(\omega^{j},u^{j},\xi) \cdot \Pi_{> N}(\omega^{j},u^{j},\xi), \end{displaymath} 
where $F(\ldots)$ is an abbreviation for the function appearing in \eqref{form5}. Similarly $\widehat{\eta_{u}^{\omega}}(\xi) =: \Pi_{\leq N}(\omega,u,\xi) \cdot \Pi_{> N}(\omega,u,\xi)$. Then, introducing a cross-term, one may estimate
\begin{align} |\widehat{\eta^{\omega_{j}}_{u_{j}}}(\xi) - \widehat{\eta^{\omega}_{u}}(\xi)| & \leq |\Pi_{\leq N}(\omega^{j},u^{j},\xi)||\Pi_{> N}(\omega^{j},u^{j},\xi) - \Pi_{> N}(\omega,u,\xi)|\notag\\
&\quad + |\Pi_{> N}(\omega,u,\xi)||\Pi_{\leq N}(\omega^{j},u^{j},\xi) - \Pi_{\leq N}(\omega,u,\xi)|\notag \\
&\label{form80} \leq |\Pi_{> N}(\omega^{j},u^{j},\xi) - \Pi_{> N}(\omega,u,\xi)|\\
&\label{form81} \quad + |\Pi_{\leq N}(\omega,u^{j},\xi) - \Pi_{\leq N}(\omega,u,\xi)|,   \end{align} 
noting that all partial products are bounded by $1$, and using $\omega^{j}|_{N} = \omega|_{N}$ upon arrival at \eqref{form81}. Since $\xi \in \R$ is fixed, the term $\Pi_{> N}(\omega,u,\xi)$ will converge to $1$ as $N \to \infty$, at a rate independent of $\omega$ and $u$, and the same is true for $\Pi_{> N}(\omega^{j},u^{j},\xi)$. To see this, first estimate the individual factors in the product $\Pi_{> N}(\omega,u,\xi)$, for $n > N$:
\begin{align*}
|F(\omega_{n},u,\xi) - 1| & \leq \frac{1}{m(\omega_{n})} \sum_{j = 1}^{m(\omega_{n})} \left| \exp\left(-2\pi i \lambda(\omega|_{n - 1})t_{j}(\omega_{n},u)\xi\right) - 1\right| \leq 2\pi \lambda_{\sup}^{n - 1}t_{\sup}|\xi|,
\end{align*}  
using that $x \mapsto e^{ix}$ is $1$-Lipschitz, and recalling from Definition \ref{FourierDecayAss} that
\begin{displaymath}
\lambda_{\sup} := \sup \{\lambda(\tau) : \tau \in \mathcal{T} \} < 1 \quad \text{and} \quad t_{\mathrm{sup}} := \sup \{t(\tau,u) : \tau \in \mathcal{T}, \, u \in U\} < \infty.
\end{displaymath}
This implies that the factors of $\Pi_{> N}(\omega,u,\xi)$ converge to $1$ rapidly enough to also ensure $\Pi_{> N}(\omega,u,\xi) \to 1$, as $N \to \infty$, uniformly in $(\omega,u)$. So, \eqref{form80} can be made $< \epsilon$ by choosing $N$ large, in a manner depending only on $\epsilon,\xi$. After this, to handle \eqref{form81}, one recalls that
\begin{displaymath} \eqref{form81} = \left|\prod_{n = 1}^{N} F(\omega_{n},u^{j},\xi) - \prod_{n = 1}^{N} F(\omega_{n},u,\xi) \right|. \end{displaymath} 
Since each individual function $u \mapsto F(\omega_{n},u,\xi)$ is continuous, $1 \leq n \leq N$, the difference above can be made $< \epsilon$ by requiring that $\delta$ in \eqref{form79} is small enough, depending only on $\epsilon,N$. This completes the proof. \end{proof} 

\begin{cor}\label{c:borel} The set $\{(\omega,u) \in \Omega \times U : \Fd \widehat{\eta^{\omega}_{u}} = 0\}$ is Borel. \end{cor}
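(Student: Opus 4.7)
The plan is to unpack the definition of the Fourier dimension into a countable Boolean combination of sets that are open in $\Omega \times U$, using the preceding continuity lemma as the main input. First, I would observe that $\Fd \eta^\omega_u = 0$ holds if and only if, for every $s > 0$, no constant $C > 0$ satisfies $|\widehat{\eta^\omega_u}(\xi)| \leq C |\xi|^{-s/2}$ for all $\xi \neq 0$. Equivalently, for every $s > 0$ and every $C > 0$ there exists $\xi \neq 0$ with $|\widehat{\eta^\omega_u}(\xi)| \cdot |\xi|^{s/2} > C$. Restricting $s$ to positive rationals and $C$ to positive integers costs nothing, yielding
\begin{displaymath}
\{(\omega,u) : \Fd \eta^\omega_u = 0\} = \bigcap_{s \in \Q_{+}} \bigcap_{C \in \N} \bigcup_{\xi \in \R \setminus \{0\}} \{(\omega,u) : |\widehat{\eta^\omega_u}(\xi)| \cdot |\xi|^{s/2} > C\}.
\end{displaymath}

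Second, I would reduce the inner uncountable union to a countable one. Since $\eta_u^\omega$ is a compactly supported Radon measure (a convolution of finite sums of point masses with uniformly bounded support, by Definition \ref{FourierDecayAss}), its Fourier transform is continuous on $\R$, and hence for each fixed $(\omega,u,s)$ the map $\xi \mapsto |\widehat{\eta^\omega_u}(\xi)| \cdot |\xi|^{s/2}$ is continuous on $\R \setminus \{0\}$. Consequently, the set of $\xi \neq 0$ where the strict inequality $|\widehat{\eta^\omega_u}(\xi)| |\xi|^{s/2} > C$ holds is open in $\R \setminus \{0\}$, and meets $\R \setminus \{0\}$ if and only if it meets the dense subset $\Q \setminus \{0\}$. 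So the inner union over $\xi \in \R \setminus \{0\}$ can be replaced by the countable union over $\xi \in \Q \setminus \{0\}$.

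Third, by the preceding lemma, for each fixed $\xi \in \R$ the map $(\omega,u) \mapsto \widehat{\eta^\omega_u}(\xi)$ is continuous on $\Omega \times U$, so $\{(\omega,u) : |\widehat{\eta^\omega_u}(\xi)| |\xi|^{s/2} > C\}$ is open in $\Omega \times U$. Taking the countable union over $\xi \in \Q \setminus \{0\}$ preserves openness, and the two subsequent countable intersections over $C \in \N$ and $s \in \Q_+$ therefore produce a $G_\delta$ subset of $\Omega \times U$, which is Borel.

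There is no real obstacle here — the corollary is a routine measurability argument once the continuity lemma is in hand. The only point that requires even a moment's thought is the reduction of the inner supremum/union from $\R \setminus \{0\}$ to $\Q \setminus \{0\}$, and this is immediate from the continuity of $\widehat{\eta^\omega_u}$ in the frequency variable together with the fact that $|\xi|^{s/2}$ is continuous and nonvanishing off the origin.
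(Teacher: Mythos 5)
Your argument is correct and takes essentially the same route as the paper: both express the set $\{\Fd\eta^\omega_u = 0\}$ as a countable intersection/union of sets of the form $\{(\omega,u) : |\widehat{\eta^\omega_u}(\xi)| > (\text{threshold})\}$, invoke the preceding continuity lemma to conclude these are open in $\Omega \times U$, and use continuity in $\xi$ to reduce the frequency parameter to a countable (rational) index set. The paper's formulation replaces the explicit constant $C$ by the requirement $|\xi| \geq i$ for all $i$, which is a cosmetic variant of your decomposition with no substantive difference.
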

\begin{proof} The set in question can be expressed as
\begin{displaymath} \bigcap_{\epsilon > 0} \bigcap_{i = 1}^{\infty} \bigcup_{|\xi| \geq i} \{(\omega,u) : |\widehat{\eta^{\omega}_{u}}(\xi)| > |\xi|^{-\epsilon}\}, \end{displaymath}
where the unions and intersections run over rational numbers, and the individual sets $\{(\omega,u) : |\widehat{\eta^{\omega}_{u}}(\xi)| > |\xi|^{-\epsilon}\}$ are open by the previous lemma. \end{proof} 

We now return to the proof of Proposition \ref{fourierDecayProp}. We single out the type $\tau_{0} \in \calT$ such that \eqref{form47} holds, and we assume with no loss of generality that $u \mapsto t_{3}(\tau_0,u) - t_{1}(\tau_0,u)$ is not identically zero, and
\begin{displaymath}
u \mapsto \frac{t_{2}(\tau_0,u) - t_{1}(\tau_0,u)}{t_{3}(\tau_0,u) - t_{1}(\tau_0,u)}
\end{displaymath}
is non-constant on $U$. We note that the event 
\begin{equation}\label{setG}
G_{0} := \{\omega \in \Omega : \liminf_{n \to \infty} \tfrac{1}{n} |\{1 \leq i \leq n : \omega_{i} = \tau_{0}\} > \wp \}|
\end{equation} 
has $\tn(G_{0}) = 1$ by the law of large numbers for any choice of 
\begin{displaymath}
0 < \wp < q(\tau_{0}).
\end{displaymath}
We write $\wp := q(\tau_{0})/2$. In the sequel, we will only consider points $\omega \in G_{0}$. We will \textbf{not} quite prove \eqref{form6} for $\omega \in G_{0}$, but the eventual full probability set appearing in Proposition \ref{fourierDecayProp} will be contained in $G_{0}$. 

We start by noting that
\begin{displaymath}
\frac{1}{m(\omega_{n})} \left| \sum_{j = 4}^{m(\omega_n)} \exp\left(-2\pi i \lambda(\omega|_{n - 1}) t_{j}(\omega_{n},u)\xi \right) \right| \leq 1 - \frac{3}{m(\omega_{n})}.
\end{displaymath} 
With this in mind, and writing
\begin{displaymath}
f_{1}(u) := t_{2}(\tau_0,u) - t_{1}(\tau_0,u) \quad \text{and} \quad f_{2}(u) := t_{3}(\tau_0,u) - t_{1}(\tau_0,u), \quad u \in U,
\end{displaymath}
we may rather crudely estimate as follows for all $n \geq 1$ such that $\omega_{n} = \tau_{0}$:
\begin{align*}
\frac{1}{m(\omega_{n})} & \left| \sum_{j = 1}^{m(\omega_n)} \exp\left(-2\pi i \lambda(\omega|_{n - 1}) t_{j}(\omega_{n},u)\xi \right) \right|\\
& \leq \frac{1}{m(\omega_{n})} \big|1 + \exp\left(-2\pi i \lambda(\omega|_{n - 1})f_{1}(u)\xi \right) + \exp\left(-2\pi i \lambda(\omega|_{n - 1})f_{2}(u)\xi \right)\big|\\
& \qquad + \left(1 - \frac{3}{m(\omega_{n})} \right).
\end{align*} 
So, if we write $\zeta_{\omega,u}(n,\xi)$ for the term on the middle line, that is,
\begin{displaymath}
\zeta_{\omega,u}(n,\xi) = \big|1 + \exp\left(-2\pi i \lambda(\omega|_{n - 1})f_{1}(u)\xi \right) + \exp\left(-2\pi i \lambda(\omega|_{n - 1})f_{2}(u)\xi \right)\big|,
\end{displaymath} 
then, recalling \eqref{form5}, we have now shown that
\begin{equation}\label{form9}
|\widehat{\eta^{\omega}_{u}}(\xi)| \leq \mathop{\prod_{n \geq 1}}_{\omega_{n} = \tau_{0}} \left[ \frac{\zeta_{\omega,u}(n,\xi)}{m(\omega_{n})} + \left(1 - \frac{3}{m(\omega_{n})} \right) \right].
\end{equation}
The indices $\omega_{n}$ with $\omega_{n} \neq \tau_{0}$ will be irrelevant for the estimate, but there are plenty of indices $\omega_{n} = \tau_{0}$ by the assumption $\omega \in G_{0}$. Note that trivially $\zeta_{\omega,u}(n,\xi) \leq 3$, and the right hand side of \eqref{form9} gives useful information for precisely those indices $n \geq 1$ with $\omega_{n} = \tau_{0}$ such that $\zeta_{\omega,u}(n,\xi) < 3$.

To achieve a useful estimate for $\zeta_{\omega,u}(n,r)$, we note that $|1 + \exp(-2\pi i x) + \exp(-2\pi i y)| = 3$ if and only if $\|x\| = 0 = \|y\|$, where $\|x\| \in [0,1/2]$ stands for the distance of $x \in \R$ to the nearest integer. Furthermore, by compactness (or a more quantitative argument if desired), for any $\rho > 0$ there exists $\alpha > 0$ such that
\begin{displaymath}
\max\{\|x\|,\|y\|\} > \rho \quad \Longrightarrow \quad |1 + \exp(-2\pi i x) + \exp(-2\pi i y)| \leq 3 - \alpha.
\end{displaymath}
Recalling the definition of $\zeta_{\omega,u}(n,r)$, it follows that
\begin{equation}\label{form8}
\max\left\{ \left\| \lambda(\omega|_{n - 1})f_{1}(u) \xi  \right\|, \left\| \lambda(\omega|_{n - 1})f_{2}(u) \xi  \right\| \right\} \geq \rho \quad \Longrightarrow \quad \zeta_{\omega,u}(n,\xi) \leq 3 - \alpha.
\end{equation}
So, now the remaining task is to show that the quantity on the left hand side of \eqref{form8} is $> \rho$ quite often, if $\rho > 0$ is taken sufficiently small. To formulate a more rigorous statement, a few additional pieces of notation are beneficial. First, we will write
\begin{displaymath}
\theta(\tau) := \lambda(\tau)^{-1}, \quad \theta(W) := \prod_{i = j}^{j + k} \theta(\omega_{i}), \quad \text{and} \quad \lambda(W) := \prod_{i = j}^{j + k} \lambda(\omega_{i}),
\end{displaymath}
whenever $\tau \in \calT$, and $W = (\omega_{j},\ldots,\omega_{j + k})$ is a finite word over $\calT$. The collection of all finite words over $\calT$ will be denoted by $\calT^{\ast}$. The notation above agrees with \eqref{form10}. We also define
  \begin{displaymath}
\lambda(\emptyset) := 1 =: \theta(\emptyset),
\end{displaymath}  
where $\emptyset$ is the empty word. It is unpleasant that the numbers $\lambda(\omega|_{n - 1})f_{j}(u)\xi$ from \eqref{form8} decrease as $n$ increases, so we wish to reindex them in increasing order. Second, we are only interested in those $n \geq 1$ such that $\omega_{n} = \tau_{0}$, and we want to reshape our notation to reflect this. So, for $\omega \in G_{0}$, write
\begin{equation}\label{omegaW}
\omega = W_{1}W_{2} \cdots,
\end{equation}
where each $W_{m}$ has the form $W_{m} = W_{m}'\tau_{0}$ with $W_{m}' \in (\calT \setminus \{\tau_{0}\})^{\ast}$ (we allow $W_{m}' = \emptyset$ here). We will generally use the letter $m$ to index the words $W_{m}$.

Now, we fix $\omega \in G_{0}$ and a large integer $M \geq 1$, and we define
\begin{equation}\label{form19}
\Theta_{m} := \Theta_{m}^{(M,\omega)} := \theta(\tau_{0})\theta(W_{M - m + 1}\cdots W_{M}), \qquad 1 \leq m \leq M.
\end{equation}  
Then $\Theta_{1} = \theta(\tau_{0})\theta(W_M)$, and $\Theta_{m} \leq \Theta_{m + 1}$ for $1 \leq m \leq M - 1$. 

\begin{remark}\label{remark1}
Let $M \geq 1$ be a large integer, and let $\omega \in G_{0}$. Let $1 \leq n(1) < n(2) <\cdots < n(M + 1)$ be the $M + 1$ first indices with $\omega(n(m)) = \tau_{0}$. Let
\begin{displaymath}
\xi \in [\theta(\omega|_{n(M)}),\theta(\omega|_{n(M + 1)})) \quad \text{and} \quad \nu := \frac{\xi}{\theta(\omega|_{n(M)})} \in [1,\theta(W_{M + 1})),
\end{displaymath}
Then, if $1 \leq m \leq M$, and the numbers $\Theta_{m}$ are defined as in \eqref{form19}, we have
\begin{equation}\label{form18}
\Theta_{m}f_{j}(u)\nu = \theta(\tau_{0})\frac{\lambda(W_{1}\cdots W_{M})f_{j}(u)\xi}{\lambda(W_{M - m + 1} \cdots W_{M})} = \lambda(\omega|_{n(M - m) - 1})f_{j}(u)\xi.
\end{equation}
So, $\Theta_{m}f_{j}(u)\nu$ is far from an integer
\begin{displaymath}
\textit{for all } \nu \in [1,\theta(W_{M + 1})) \textit{ and for most } 1 \leq m \leq M,
\end{displaymath}
if and only if $\lambda(\omega|_{n(m) - 1})f_{j}(u)\xi$ is far from an integer
\begin{displaymath}
\textit{ for all } \xi \in [\theta(\omega|_{n(M)}),\theta(\omega|_{n(M + 1)})) \textit{ and for most } 1 \leq m \leq M.
\end{displaymath}
Recalling \eqref{form8}, we need exactly the latter kind of information to treat the product \eqref{form9}, while the next lemma will give information of the former kind.
\end{remark}

\begin{lemma}\label{fourierDecayLemma}
There is a set $G \subset G_{0}$ with $\tn(G) = 1$ such that the following holds for all 
\begin{displaymath}
\omega = W_{1}W_{2}\cdots \in G \qquad \emph{(as in \eqref{omegaW})},
\end{displaymath}
$M \geq 1$, $c > 0$ and $\delta \in (0,1)$. If $\rho > 0$ is sufficiently small, depending on $\delta,\wp,\theta_{\max}$ and $\log \theta_{\max}/\log \theta_{\min}$, where
\begin{displaymath}
\theta_{\min} := \inf\{\theta(\tau) : \tau \in \calT \} > 1 \quad \text{and} \quad \theta_{\max} := \sup\{\theta(\tau) : \tau \in \calT \}  < \infty,
\end{displaymath}
then the set
\begin{align*}
E_{\rho,\delta,M,\omega,c} := \bigg\{\frac{z_{1}}{z_{2}} : \;&|z_{i}| \in [c,2c] \text{ and } \exists \: \nu \in \left[1,\theta(W_{M + 1})\right) \text{ such that } \\
& \tfrac{1}{M} \left| \left\{ 1 \leq m \leq M : \max\{\|\Theta_{m}z_{1}\nu\|, \|\Theta_{m}z_{2}\nu\|\} < \rho \right\} \right| \geq 1 - \delta \bigg\}
\end{align*}
can be covered by $\lesssim_{\omega,c} \exp(H \cdot \log(1/\delta) \cdot \delta M)$ intervals of length $\lesssim_{c} \lambda_{\max}^{M}$, where $\lambda_{\max} = \theta_{\min}^{-1}$, and $H \geq 1$ depends on $\theta_{\min},\theta_{\max}$ and $(\Omega,\tn)$. Here $\Theta_{m} = \Theta_{m}^{(M,\omega)}$ as in \eqref{form19}.
\end{lemma}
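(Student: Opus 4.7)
The plan is to cover $E_{\rho,\delta,M,\omega,c}$ by parametrising via $w_{i} := z_{i}\nu$, so that the hypothesis becomes: $(w_{1},w_{2})$ lies in $\{(w_{1},w_{2}) : |w_{i}| \in [c, 2c\theta(W_{M+1})), \; |w_{1}|/|w_{2}| \in [1/2, 2]\}$ and $\|\Theta_{m} w_{i}\| < \rho$ holds for $i \in \{1,2\}$ and for $m$ in a subset $B^{c} \subseteq \{1,\ldots,M\}$ with $|B^{c}| \geq (1-\delta)M$. Every ratio $z_{1}/z_{2}$ in $E_{\rho,\delta,M,\omega,c}$ is the image of such a pair $(w_{1},w_{2})$ under $(w_{1},w_{2}) \mapsto w_{1}/w_{2}$, so it suffices to cover the two-variable set in $\R^{2}$ and project.

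First I would union-bound over the \emph{bad sets} $B \subseteq \{1,\ldots,M\}$ with $|B| \leq \delta M$; there are at most $\binom{M}{\lfloor\delta M\rfloor} \leq \exp(H_{0}\delta \log(1/\delta) M)$ of them, providing the main exponential factor in the final bound. For each fixed $B$ with complement $B^{c} = \{m_{1} < \cdots < m_{K}\}$, I would estimate
\begin{displaymath}
|E^{(i)}_{B}| := |\{w \in [c, 2c\theta(W_{M+1})) : \|\Theta_{m} w\| < \rho \text{ for all } m \in B^{c}\}|, \quad i = 1, 2,
\end{displaymath}
via a scale-by-scale refinement: a single step $m_{j} \to m_{j+1}$ reduces the Lebesgue measure by a factor comparable to $\max(\rho, 1/r_{j})$, with $r_{j} := \Theta_{m_{j+1}}/\Theta_{m_{j}}$. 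Telescoping, and combining with (i) the budget $\sum_{j}(m_{j+1}-m_{j}-1) \leq \delta M$, (ii) the gap lower bound $r_{j} \geq \theta_{\min}^{m_{j+1}-m_{j}}$ (from $\ell(W_{i}) \geq 1$), and (iii) a uniform-in-$M$ upper bound $\Theta_{M} \lesssim_{\omega} \theta_{\max}^{M/\wp + o(M)}$ valid on a full-$\tn$-measure subset $G \subseteq G_{0}$ (obtained from the strong law of large numbers applied to the hitting times of $\tau_{0}$ in $\omega$), yields $|E^{(i)}_{B}| \lesssim_{\omega,c} \lambda_{\max}^{M} \exp(H_{1}\delta M)$, provided $\rho$ is sufficiently small in the parameters stated in the lemma. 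The product $E^{(1)}_{B}\times E^{(2)}_{B}$ can thus be covered by $\lesssim_{\omega,c} \exp(2H_{1}\delta M)$ axis-aligned squares of side $\lesssim_{c}\lambda_{\max}^{M}$; since $(w_{1},w_{2}) \mapsto w_{1}/w_{2}$ is $O(1/c)$-Lipschitz on our domain, projection produces a cover of the $B$-contribution to $E_{\rho,\delta,M,\omega,c}$ by comparably many intervals of length $\lesssim_{c}\lambda_{\max}^{M}$. Summing over $B$ and taking $H$ large enough (in terms of $H_{0}$, $H_{1}$ and the geometric parameters $\theta_{\min}, \theta_{\max}, \wp$) delivers the claimed bound.

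The principal obstacle lies in the per-step measure bound $|E^{(i)}_{j+1}| \lesssim \max(\rho, 1/r_{j}) \cdot |E^{(i)}_{j}|$: a naive analysis produces an additional multiplicative constant $C > 1$ from ``edge'' contributions, arising when a component of the admissible set straddles a boundary of the $\Theta_{m_{j+1}}$-grid, and if left unamortised these would yield a prohibitive factor $C^{K} \sim C^{M}$. Overcoming this will require (a) exploiting $|B| \leq \delta M$ to force all but $O(\delta M)$ of the gaps $m_{j+1}-m_{j}$ to equal $1$; (b) choosing $\rho$ small enough that $2\rho r_{j} \ll 1$ for the typical small-gap indices, pushing the edge terms into a lower-order regime whose cardinality is controlled by the budget constraint; and (c) summing the edge contributions across the $K$ steps, which form a geometric series in $\lambda_{\max}^{m_{j+1}}$ and hence contribute only an $O(1)$ overhead absorbable into the $\omega$-dependent constants on the full-measure set $G$.
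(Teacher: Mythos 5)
Your high-level plan — parametrise by $(w_1,w_2)=(z_1\nu,z_2\nu)$, union-bound over bad index sets $B$ (this is the Stirling step the paper saves for the end), do a scale-by-scale refinement on the good indices, and project — is a legitimate reorganisation of the Erd\H{o}s--Kahane counting that the paper performs by tracking the integer sequences $(K_m,L_m)$. It would in fact allow a one-step recursion in place of the paper's two-step one (the two steps are inherited from \cite{SSS}, where the contraction ratios vary with $u$; here $\Theta_{m+1}/\Theta_m=\theta(W_{M-m})$ is known once $\omega$ is fixed). But there are two places where your sketch does not close.

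First, you work with the \emph{Lebesgue measure} $|E_B^{(i)}|$, yet what the lemma asks for is a \emph{cover} by $\lambda_{\max}^M$-intervals. The components of $E_B^{(i)}$ at the final scale are pieces of $\rho/\Theta_M$-neighbourhoods of the grid $\Theta_M^{-1}\mathbb{Z}$, and such pieces can be arbitrarily short, so a measure bound does not bound the number of components. The clean way through, and the engine of the paper's proof, is to count integers: each surviving interval at step $j$ is labelled by $K_{m_j}=\lfloor \Theta_{m_j}w\rceil$, and one shows that when $\rho(r_j+1)<1/2$ the integer $K_{m_{j+1}}$ is \emph{uniquely determined} by $K_{m_j}$ (it is the unique integer within $\rho(r_j+1)<1/2$ of $r_jK_{m_j}$), giving per-step factor exactly $1$ for good indices. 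Your proposed fix (c) — that the ``edge'' contributions sum as a geometric series and give $O(1)$ overhead — confuses an additive quantity with a multiplicative one: the per-step factor $2\rho r_j+2$ multiplies across $\sim M$ steps, and even a factor of $2$ per good step yields $2^M$, which is far beyond the allowed $\exp(H\log(1/\delta)\,\delta M)$. Only the unique-determination step (count factor $=1$, not $\leq 2$) kills this.

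Second, the condition $\rho(r_j+1)<\tfrac12$ requires an \emph{upper} bound on $r_j=\Theta_{m_{j+1}}/\Theta_{m_j}\leq\theta_{\max}^{\,|W_{M-m_j}|+\cdots+|W_{M-m_{j+1}+1}|}$, i.e.\ control of the relevant word lengths. Your item (ii) records the useless lower bound $r_j\geq\theta_{\min}^{m_{j+1}-m_j}$. And the claim that the needed full-measure set $G$ comes from ``the strong law of large numbers applied to hitting times'' is too weak: the LLN gives $|W_1\cdots W_M|\sim M/q(\tau_0)$ (which you use correctly for the interval-length bound on $\Theta_M$), but what the counting requires is a \emph{uniform} bound
\[
\max_{\substack{\calI\subset\{1,\ldots,M\}\\ |\calI|\leq 4\delta M}}\;\sum_{m\in\calI}\bigl(|W_{M-m}|+|W_{M-m-1}|\bigr)\;\leq\;C\log(1/\delta)\,\delta M
\]
over all sparse index sets $\calI$, which is a large-deviations estimate (this is \cite[Lemma 5.2]{SSS}, used in the paper to define $G$). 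Without this, the bad-index multiplicative factors $(2B_m+1)^2$ with $B_m\sim\theta_{\max}^{O(|W_{M-m}|+|W_{M-m-1}|)}$ — equivalently your factors $2\rho r_j+2$ when $\rho r_j\geq 1$ — cannot be amortised to $\exp(O(\log(1/\delta)\,\delta M))$.
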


The notation $a \lesssim_p b$ above means that there exists a constant $C \ge 1$, depending only on the parameter $p$, such that $0 \le a \le Cb$. The proof of the lemma is an "Erd\H{o}s-Kahane"-type argument, and is very similar to \cite[Proposition 5.4]{SSS} -- so similar, in fact, that we can use many estimates from \cite[Proposition 5.4]{SSS} verbatim. The best way to describe the difference between Lemma \ref{fourierDecayLemma} and \cite[Proposition 5.4]{SSS} is perhaps to say that Lemma \ref{fourierDecayLemma} is a combination of \cite[Proposition 5.4]{SSS} and \cite[Lemma 3.2]{SS}. The argument originates to the works of Erd\H{o}s \cite{Er} and Kahane \cite{Ka}. If the reader is not familiar with the general scheme of the proof, then we recommend \cite[Proposition 6.1]{PSS} for a neat version of the argument in a simpler setting.

Before proving the lemma, we use it to prove Proposition \ref{fourierDecayProp}.

\begin{proof}[Proof of Proposition \ref{fourierDecayProp}]
We claim that the set $G$ appearing in the statement of Lemma \ref{fourierDecayLemma} also works here. In other words, if $\omega \in G$, then
\begin{equation}\label{form38}
\Hd \{u \in U : \Fd \eta_{u}^{\omega} = 0\} = 0.
\end{equation}
Assume that \eqref{form38} fails, define a Borel set $B := \{u \in U : \Fd \eta_{u}^{\omega} = 0\}$, and let $\sigma \in \calM(B)$ be an $\epsilon$-Frostman measure for some $\epsilon > 0$ (i.e.\ $\sigma([a,b]) \le (b-a)^\epsilon$ for all $a<b$). One can show that $B$ is Borel as in Corollary \ref{c:borel}. We will reach a contradiction by showing that $\sigma(B) = 0$. To do so, it suffices to show that $\sigma(B \cap I) = 0$ for all intervals $I \subset \R$ such that
\begin{displaymath}
u \mapsto \zeta(u) := \frac{f_{1}(u)}{f_{2}(u)} = \frac{t_{2}(\tau,u) - t_{1}(\tau,u)}{t_{3}(\tau,u) - t_{1}(\tau,u)}
\end{displaymath}
is $C_{I}$-bilipschitz on $I$: indeed, by analyticity, there is only a discrete set of values $u \in U$ where either $t_{3}(\tau,u) - t_{1}(\tau,u) = 0$ or $\zeta'(u) = 0$. We now fix one such interval $I$. Then, we also fix $\delta \in (0,1)$ and $M \geq 1$. We assume without loss of generality that there exists $c = c_{I} > 0$ such that
\begin{equation}\label{form20}
c \leq \inf_{u \in I} \min\{|f_{1}(u)|,|f_{2}(u)|\} \leq \sup_{u \in I} \max\{|f_{1}(u)|,|f_{2}(u)|\} \leq 2c.
\end{equation}
The maps $f_{1},f_{2}$ are real-analytic and non-constant by assumption, so $I$ can, up to a countable set, be further partitioned into intervals where \eqref{form20} holds. Thus, it suffices to show that $\sigma(B \cap I) = 0$ for all such intervals $I$.

Next, we find $\rho > 0$ so small that the conclusion of Lemma \ref{fourierDecayLemma} holds for $E = E_{\rho,\delta M,\omega,c}$. As the lemma says, the set $E \cap \zeta(I)$ can be covered by $\lesssim_{\omega,c} \exp(H \cdot \log(1/\delta) \cdot \delta M)$ intervals of length $\lesssim_{c} \lambda_{\max}^{M}$, where $c = c_{I}$ is the constant appearing in \eqref{form20}. Since $\zeta$ is $C_{I}$-bilipschitz on $I$, the same conclusion (up to a change of constants) is true for the following set:
\begin{align*}
\tilde{E}_{M,\delta} := \bigg\{u \in I : \;& \exists \: \nu \in \left[1,\theta(W_{M + 1})\right) \text{ such that } \\
& \tfrac{1}{M} \left| \left\{ 1 \leq m \leq M : \max\{\|\Theta_{m}f_{1}(u)\nu\|, \|\Theta_{m}f_{2}(u)\nu\|\} < \rho \right\} \right| \geq 1 - \delta \bigg\}.
\end{align*}
From the $\epsilon$-Frostman property of $\sigma$, we infer that
\begin{equation}\label{form39}
\sigma(\tilde{E}_{M,\delta}) \lesssim_{\omega,c_{I},C_{I}} \exp(H \cdot \log(1/\delta) \cdot \delta M) \cdot \lambda_{\max}^{\epsilon M}.
\end{equation} 
Taking $\delta > 0$ sufficiently small, depending on $\epsilon,H$ and $\lambda_{\max}$, we see from \eqref{form39} that
\begin{displaymath}
\sum_{M \geq 1} \sigma(\tilde{E}_{M,\delta}) < \infty,
\end{displaymath}
and consequently $\tilde{E} := \limsup_{M \to \infty} \tilde{E}_{M,\delta}$ has vanishing $\sigma$ measure by the Borel-Cantelli lemma. To complete the proof, it remains to show that
\begin{displaymath}
B \cap I \subset \tilde{E}.
\end{displaymath}
Pick $u \in I \setminus \tilde{E}$. We wish to show that $u \notin B$, or in other words
\begin{displaymath}
\Fd \eta_{u}^{\omega} > 0.
\end{displaymath}
Pick any $M \geq 1$ so large that $u \notin \tilde{E}_{M,\delta}$, and, as in Remark \ref{remark1} above, let $1 \leq n(1) < n(2) < \cdots < n(M + 1)$ be an enumeration of the $M + 1$ first indices for which $\omega(n(m)) = \tau_{0}$. Recall from \eqref{form18} the relationship
\begin{displaymath}
\Theta_{m}f_{j}(u)\left(\frac{\xi}{\theta(\omega|_{n(M)})} \right) = \lambda(\omega|_{n(M - m) - 1})f_{j}(u)\xi,
\end{displaymath}
valid for $j \in \{1,2\}$, $1 \leq m \leq M$, and $\xi \in [\theta(\omega|_{n(M)}),\theta(\omega|_{n(M + 1)}))$. Since 
\begin{displaymath}
\nu := \frac{\xi}{\theta(\omega|_{n(M)})} \in [1,\theta(W_{M + 1}))
\end{displaymath}
for any such choice of $\xi$, the assumption $u \notin \tilde{E}_{M,\delta}$ states that
\begin{align*}
|\{1 \leq m \leq M : \;&\max\{\|\lambda(\omega|_{n(m) - 1})f_{1}(u)\xi\|, \|\lambda(\omega|_{n(m) - 1})f_{2}(u)\xi\| \} \geq \rho\}|\\
& = \left| \left\{ 1 \leq m \leq M : \max\{\|\Theta_{m}f_{1}(u)\nu\|, \|\Theta_{m}f_{2}(u)\nu\|\} \geq \rho \right\} \right| \geq \delta M
\end{align*} 
for all $\xi \in  [\theta(\omega|_{n(M)}),\theta(\omega|_{n(M + 1)}))$. Recalling \eqref{form8}, and then \eqref{form9}, we infer that
\begin{equation}\label{form69}
|\widehat{\eta_{u}^{\omega}}(\xi)| \leq \left(1 - \frac{\alpha}{m(\tau_{0})} \right)^{\delta M}, \qquad \xi \in  [\theta(\omega|_{n(M)}),\theta(\omega|_{n(M + 1)})),
\end{equation}
where $\alpha = \alpha(\rho) > 0$. But since $\wp n(M) \lesssim M \leq n(M)$ for $M \geq 1$ sufficiently large (recall the parameter $\wp$ from \eqref{setG}, and that $\omega \in G \subset G_{0}$), and also
\begin{displaymath}
\theta_{\min}^{n(M)} \leq \theta(\omega|_{n(M)}) \leq \theta(\omega|_{n(M + 1)}) \leq \theta_{\max}^{n(M + 1)},
\end{displaymath} 
the estimate in \eqref{form69} yields $\Fd \eta_{u}^{\omega} > 0$. The proof is complete.
\end{proof}

It remains to establish Lemma \ref{fourierDecayLemma}.

\begin{proof}[Proof of Lemma \ref{fourierDecayLemma}]
Fix $\omega = W_{1}W_{2}\cdots \in G_0$, $M \geq 1$, $c>0$, $\delta,\rho \in (0,1)$. Assume that 
\begin{displaymath}
z_{1}/z_{2} \in E := E_{\rho,\delta,M,\omega,c}
\end{displaymath}
with $|z_{1}|,|z_{2}| \in [c,2c]$, so by definition there exists $\nu \in \left[1,\theta(W_{M + 1})\right)$ such that
\begin{equation}\label{form22}
\left| \left\{ 1 \leq m \leq M : \max\{\|\Theta_{m}z_{1}\nu\|, \|\Theta_{m}z_{2}\nu\|\} < \rho \right\} \right| \geq (1 - \delta)M.
\end{equation} 
We only consider the case $z_{1},z_{2} \in [c,2c]$. Now, for $1 \leq m \leq M$, we write
\begin{equation}\label{KmLm}
\Theta_{m}z_{1}\nu =: K_{m} + \epsilon_{m} \quad \text{and} \quad  \Theta_{m}z_{2}\nu =: L_{m} + \delta_{m},
\end{equation}
where $K_{m},L_{m} \in \N$, and $\epsilon_{m},\delta_{m} \in [-\tfrac12,\tfrac12)$. To emphasise the obvious, all the numbers $K_{m},L_{m},\epsilon_{m}$ and $\delta_{m}$ depend on the parameters $M,z_{j},\nu,\omega,u$ even if we suppress this from the notation -- whenever the reader sees $K_{m}$, say, we ask him/her to think of $K_{m}^{M,z_{1},z_{2},\nu,\omega,u}$. We note that
\begin{equation}\label{form21}
\min\{K_{M},L_{M}\} \gtrsim \Theta_{M} \min \{z_{1},z_{2}\}\nu \geq c\lambda_{\max}^{-|W_{1}\cdots W_{M}|} \geq c\lambda_{\max}^{-M}
\end{equation}
by the definition of $\Theta_{M}$. Now, we discuss the rest of the proof in a heuristic manner. By \eqref{form21}, we have
\begin{equation}\label{form37}
\frac{z_{1}}{z_{2}} = \frac{\Theta_{M}z_{1}\nu}{\Theta_{M}z_{2}\nu} = \frac{K_{M} + \epsilon_{M}}{L_{M} + \delta_{M}} \in B \left(\frac{K_{M}}{L_{M}},C\lambda_{\max}^{M} \right), \quad C = C_{c} \geq 1.
\end{equation}
To cover the ratios $z_{1}/z_{2}$, we will use the balls above, and hence we need to estimate the number of possible ratios $K_{M}/L_{M}$, for all admissible choices of $z_{1},z_{2},\nu,\omega$. This number will be, in fact, estimated by finding an upper bound on the number of possible sequences 
\begin{equation}\label{sequences}
(K_{m})_{m = 1}^{M} \quad \text{and} \quad (L_{m})_{m = 1}^{M}.
\end{equation}
We will use the fact that these sequences arise from the (real) sequences $(\Theta_{m}z_{j}\nu)_{j = 1}^{M}$ satisfying \eqref{form22}. This will imply the following useful property on both sequences in \eqref{sequences}: if $\rho > 0$ is picked sufficiently small in \eqref{form22}, then for most indices $1 \leq m \leq M - 2$ (depending on $\delta > 0$ in \eqref{form22}), the number $K_{m + 2}$ (resp. $L_{m + 2}$) is determined by $K_{m}$ and $K_{m + 1}$ (resp. $L_{m}$ and $L_{m + 1}$). And even for those values of $m$ for which this fails, there are $\lesssim 1$ options for $K_{m + 2}$ and $L_{m + 2}$. These properties will be established in Lemma \ref{lemma1} below. So, at the end of the day, estimating the number of sequences \eqref{sequences} boils down to the following combinatorial question: how many sequences $(n_{m})_{m = 1}^{M}$ of natural numbers are there such that 
\begin{itemize}
\item for most indices $m$ the number $n_{m + 2}$ is determined by $(n_{m},n_{m + 1})$, and
\item for the remaining indices there are $\lesssim 1$ choices for $n_{m + 2}$. 
\end{itemize} 
Note that this problem no longer contains any reference to $u,\nu,\omega$. The answer turns out to be so small that the proof can be concluded.

We turn to the details, and the first main task is to quantify the dependence of $K_{m + 2}$ on $K_{m},K_{m + 1}$. This estimate is verbatim the same as the one obtained in the proof of \cite[Proposition 5.4]{SSS}, but we repeat the details for the reader's convenience. We start by observing that
\begin{equation}\label{form23}
\frac{\Theta_{m + 1}}{\Theta_{m}} = \frac{\theta(W_{M - m}\cdots W_{M})}{\theta(W_{M - m + 1}\cdots W_{M})}  = \theta(W_{M - m}), \qquad 1 \leq m \leq M - 1,
\end{equation} 
by \eqref{form19}.  On the other hand, the ratio $\Theta_{m + 1}/\Theta_{m}$ is quite close to $K_{m + 1}/K_{m}$:
\begin{equation}\label{form25}
\frac{\Theta_{m + 1}}{\Theta_{m}} - \frac{K_{m + 1}}{K_{m}} = \frac{\epsilon_{m + 1}}{K_{m}} - \left(\frac{\Theta_{m + 1}}{\Theta_{m}} \right) \frac{\epsilon_{m}}{K_{m}},
\end{equation}
as a direct computation based on \eqref{KmLm} shows. In the sequel we will write
\begin{displaymath}
\theta := \theta(\tau_{0}) > 1.
\end{displaymath} 
We also define $\beta(\tau) > 0$, $\tau \in \calT$, such that $\theta(\tau) = \theta^{\beta(\tau)}$ (in particular $\beta(\tau_{0}) = 1$), and we write
\begin{displaymath}
\beta(W) := \sum_{i = j}^{j + k} \beta(\omega_{j}), \qquad W = (\omega_{j},\ldots,\omega_{j + k}) \in \calT^{\ast}.
\end{displaymath}
Then \eqref{form23} can be rewritten as
\begin{equation}\label{form24}
\frac{\Theta_{m + 1}}{\Theta_{m}} = \theta^{\beta(W_{M - m})}, \qquad 1 \leq m \leq M - 1.
\end{equation}
Next, combining \eqref{form25} and \eqref{form24}, we obtain
\begin{equation}\label{form26}
\left|\theta^{\beta(W_{M - m})} - \frac{K_{m + 1}}{K_{m}} \right| \leq \frac{\theta^{\beta(W_{M - m})}|\epsilon_{m}| + |\epsilon_{m + 1}|}{K_{m}}, \quad 1 \leq m \leq M - 1.
\end{equation}
Noting that $\beta(W_{M - m})^{-1} \leq \beta(\tau_{0})^{-1} = 1$ since $W_{M - m}$ ends in $\tau_{0}$, we may infer from \eqref{form26} further that
\begin{equation} \label{eq:SSS-50}
\begin{split}
\left|\theta - \left(\frac{K_{m + 1}}{K_{m}} \right)^{\beta(W_{M - m})^{-1}} \right| & = \left|(\theta^{\beta(W_{M - m})})^{\beta(W_{M - m})^{-1}} - \left(\frac{K_{m + 1}}{K_{m}} \right)^{\beta(W_{M - m})^{-1}} \right|\\
& \leq \left|\theta^{\beta(W_{M - m})} - \frac{K_{m + 1}}{K_{m}} \right| \leq \frac{\theta^{\beta(W_{M - m})}|\epsilon_{m}| + |\epsilon_{m + 1}|}{K_{m}},
\end{split}
\end{equation}
using also the inequality $|x^{s} - y^{s}| \leq |x - y|$, valid for $x,y \geq 1$ and $0 \leq s \leq 1$. Similarly, we have
\begin{equation} \label{eq:SSS-51}
  \left|\theta - \left(\frac{K_{m + 2}}{K_{m+1}} \right)^{\beta(W_{M - (m+1)})^{-1}} \right| \le \frac{\theta^{\beta(W_{M - (m+1)})}|\epsilon_{m+1}| + |\epsilon_{m + 2}|}{K_{m+1}}.
\end{equation}
Using trivial estimates, it follows from \eqref{form26} that
\begin{equation} \label{eq:SSS-52}
  \frac{K_{m+1}}{K_m} \le \theta^{\beta(W_{M-m})} + \frac{\theta^{\beta(W_{M-m})}+1}{2K_m} \le 2\theta^{\beta(W_{M-m})} \le (2\theta_{\max})^{\beta(W_{M-m})}
\end{equation}
and similarly for $K_{m+2}/K_{m+1}$. Note that
\begin{equation*}
  \beta(W_{M-(m+1)}) \le 1+\beta(W_{M-(m+1)}) \le \theta^{\beta(W_{M-(m+1)})/\log\theta} \le \theta_{\max}^{k\beta(W_{M-(m+1)})},
\end{equation*}
where $k \in \N$ is such that $e^{1/k} \le \theta_{\min}$, and consequently, by \eqref{eq:SSS-52},
\begin{align} \label{eq:puiseva-lasku}
  \beta(W_{M-(m+1)}) &\max\left\{ \left( \frac{K_{m+2}}{K_{m+1}} \right)^{\frac{\beta(W_{M-(m+1)})-1}{\beta(W_{M-(m+1)})}}, \left( \frac{K_{m+1}}{K_{m}} \right)^{\frac{\beta(W_{M-(m+1)})-1}{\beta(W_{M-m})}} \right\} \\
  &\le \theta_{\max}^{k\beta(W_{M-(m+1)})} (2\theta_{\max})^{\beta(W_{M-(m+1)})-1} =: (C\theta_{\max}^{k+1})^{\beta(W_{M-(m+1)})}. \notag
\end{align}
Therefore, using the inequality $|x^s-y^s| \le s\max\{ x^{s-1},y^{s-1} \}|x-y|$, valid for $x,y > 0$ and $s \ge 1$, we get (note that $s = \beta(W_{M - (m + 1)}) \geq 1$ since $W_{M - (m + 1)}$ ends in $\tau_{0}$)
\begin{align*}
  \biggl| &\frac{K_{m+2}}{K_{m+1}} - \biggl( \frac{K_{m+1}}{K_m} \biggr)^{\frac{\beta(W_{M-(m+1)})}{\beta(W_{M-m})}} \biggr| = \biggl| \biggl( \frac{K_{m+2}}{K_{m+1}} \biggr)^{\frac{\beta(W_{M-(m+1)})}{\beta(W_{M-(m+1)})}} - \biggl( \frac{K_{m+1}}{K_m} \biggr)^{\frac{\beta(W_{M-(m+1)})}{\beta(W_{M-m})}} \biggr| \\
  &\qquad\le \beta(W_{M-(m+1)}) \max\left\{ \left( \frac{K_{m+2}}{K_{m+1}} \right)^{\frac{\beta(W_{M-(m+1)})-1}{\beta(W_{M-(m+1)})}}, \left( \frac{K_{m+1}}{K_{m}} \right)^{\frac{\beta(W_{M-(m+1)})-1}{\beta(W_{M-m})}} \right\} \\
  &\qquad\qquad\qquad\qquad\qquad\qquad \cdot \left| \left( \frac{K_{m+2}}{K_{m+1}} \right)^{\beta(W_{M-(m+1)})^{-1}} - \left( \frac{K_{m+1}}{K_{m}} \right)^{\beta(W_{M-m})^{-1}} \right| \\
  &\qquad\le (C\theta_{\max}^{k+1})^{\beta(W_{M-(m+1)})} \left[ \frac{\theta^{\beta(W_{M-(m+1)})}|\epsilon_{m+1}|+|\epsilon_{m+2}|}{K_{m+1}} + \frac{\theta^{\beta(W_{M-m})}|\epsilon_{m}|+|\epsilon_{m+1}|}{K_m} \right]
\end{align*}
by applying \eqref{eq:SSS-52}, \eqref{eq:puiseva-lasku}, \eqref{eq:SSS-50}, and \eqref{eq:SSS-51}. Finally, this yields
\begin{align}\label{form13}
\bigg| K_{m + 2} - & K_{m + 1} \left(\frac{K_{m + 1}}{K_{m}} \right)^{\frac{\beta(W_{M - (m + 1)})}{\beta(W_{M - m})}} \bigg|\\
& \leq (C\theta_{\max}^{k+2})^{\beta_{\max}(|W_{M - m}| + |W_{M - (m + 1)}|)} \cdot \max\{|\epsilon_{m}|,|\epsilon_{m + 1}|,|\epsilon_{m + 2}|\}, \quad 1 \leq m \leq M - 2. \notag
\end{align}
Here $|W|$ denotes the length of the word $W \in \calT^*$, $C \geq 1$ is an absolute constant, and
\begin{equation}\label{form28}
\beta_{\max} := \sup\{\beta(\tau) : \tau \in \calT \} \leq \frac{\log \theta_{\max}}{\log \theta_{\min}}.
\end{equation}
As far as the argument above is concerned, there is no difference between the numbers $K_{m}$ and $L_{m}$ (recall \eqref{KmLm}). Hence also
\begin{align}\label{form14-2}
\bigg| L_{m + 2} - & L_{m + 1} \left(\frac{L_{m + 1}}{L_{m}} \right)^{\frac{\beta(W_{M - (m + 1)})}{\beta(W_{M - m})}} \bigg|\\
& \leq (C\theta_{\max}^{k+2})^{\beta_{\max}(|W_{M - m}| + |W_{M - (m + 1)}|)} \cdot \max\{|\delta_{m}|,|\delta_{m + 1}|,|\delta_{m + 2}|\}, \quad 1 \leq m \leq M - 2. \notag
\end{align}
For $1 \leq m \leq M - 2$, we write
\begin{equation}\label{form29}
B_{m} := (C\theta_{\max}^{k+2})^{\beta_{\max}(|W_{M - m}| + |W_{M - (m + 1)}|)} \quad \text{and} \quad \rho_{m} := (2B_{m})^{-1}.
\end{equation}
Then, it is immediate from \eqref{form13} and \eqref{form14-2} that whenever $1 \leq m \leq M - 2$ and
\begin{equation}\label{form14}
\max\{|\delta_{m}|,|\delta_{m + 1}|,|\delta_{m + 2}|,|\epsilon_{m}|,|\epsilon_{m + 1}|,|\epsilon_{m + 2}|\} < \rho_{m},
\end{equation}
we have
\begin{displaymath}
\max\left\{ \bigg| K_{m + 2} - K_{m + 1} \left(\frac{K_{m + 1}}{K_{m}} \right)^{\frac{\beta(W_{M - (m + 1)})}{\beta(W_{M - m})}} \bigg|, \bigg| L_{m + 2} - L_{m + 1} \left(\frac{L_{m + 1}}{L_{m}} \right)^{\frac{\beta(W_{M - (m + 1)})}{\beta(W_{M - m})}} \bigg| \right\} < 1.
\end{displaymath} 
Since $K_{m + 2}$ and $L_{m + 2}$ are integers, this implies that the pair $(K_{m + 2},L_{m + 2})$ is uniquely determined by the pairs $(K_{m},L_{m})$ and $(K_{m + 1},L_{m + 1})$. This proves (b) of the following lemma, which is a modification of \cite[Lemma 5.5]{SSS} to the case of two sequences:

\begin{lemma}\label{lemma1} Let $1 \leq m \leq M - 2$. 
\begin{itemize} 
\item[(a)] Given $(K_{m},L_{m}),(K_{m + 1},L_{m + 1})$, there are $\leq (2B_{m} + 1)^{2}$ possible choices for the pair $(K_{m + 2},L_{m + 2})$. Further, there are $\lesssim_{c,\tau_{0}} B_{0}^{4}$ choices for the quadruple $(K_{1},L_{1},K_{2},L_{2})$, where
\begin{displaymath}
B_{0} := \theta_{\max}|W_{M - 1}W_{M}W_{M + 1}|.
\end{displaymath}
\item[(b)] If \eqref{form14} holds, then the pair $(K_{m + 2},L_{m + 2})$ is uniquely determined by the pairs $(K_{m},L_{m})$ and $(K_{m + 1},L_{m + 1})$.
\end{itemize}
\end{lemma}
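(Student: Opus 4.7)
The plan is to read off both parts of the lemma from the inequalities \eqref{form13} and \eqref{form14-2} together with the explicit formulas in \eqref{KmLm} and \eqref{form19}, without introducing any new machinery.

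For part (b), the point is that the right-hand side of \eqref{form13} is at most $B_m \cdot \max\{|\epsilon_m|, |\epsilon_{m+1}|, |\epsilon_{m+2}|\}$, and similarly for \eqref{form14-2}. Under assumption \eqref{form14} these maxima are strictly less than $\rho_m = (2B_m)^{-1}$, so $K_{m+2}$ lies within distance strictly less than $1/2$ of the explicit real-valued prediction built from $K_m$ and $K_{m+1}$, and likewise $L_{m+2}$ with respect to $(L_m, L_{m+1})$. Since $K_{m+2}, L_{m+2} \in \mathbb{Z}$, each is forced to equal that unique nearby integer, so uniqueness follows. This portion is in fact already laid out in the paragraph preceding the lemma statement, and I would simply cite it.

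For the first bullet of part (a), I would drop \eqref{form14} and use only the universal bound $|\epsilon_j|, |\delta_j| \leq 1/2$. Substituting this into \eqref{form13} and \eqref{form14-2} confines $K_{m+2}$ and $L_{m+2}$ each to an interval of length at most $2B_m$, centred at a value determined by $(K_m, K_{m+1})$ or $(L_m, L_{m+1})$ respectively. An interval of length $2B_m$ contains at most $2B_m + 1$ integers, so there are at most $(2B_m + 1)^2$ admissible pairs $(K_{m+2}, L_{m+2})$.

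For the second bullet of part (a), I would argue directly from the definition $\Theta_m z_j \nu = K_m + \epsilon_m$ (for $j=1$; the $j=2$ case being identical for the $L_m$), using $|z_j| \leq 2c$, $|\epsilon_m| \leq 1/2$, and $\nu \leq \theta(W_{M+1})$ to obtain $|K_m| \leq 2c\,\Theta_m\,\theta(W_{M+1}) + 1/2$. Plugging in $\Theta_1 = \theta(\tau_0)\theta(W_M)$ and $\Theta_2 = \theta(\tau_0)\theta(W_{M-1} W_M)$ together with $\theta(W_{M-1} W_M W_{M+1}) \leq \theta_{\max}^{|W_{M-1} W_M W_{M+1}|} = B_0$, each of $K_1, L_1, K_2, L_2$ lies in an interval of length $\lesssim_{c, \tau_0} B_0$, so the product of the four coordinatewise counts gives the $\lesssim_{c, \tau_0} B_0^4$ bound. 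I anticipate no genuine obstacle here: the entire argument is bookkeeping, and the conceptual work was already discharged in deriving \eqref{form13} and \eqref{form14-2}. The only mildly delicate point is keeping the normalisation of the exponent $\beta(W_{M-(m+1)})/\beta(W_{M-m})$ straight when extracting the factor $B_m$, but this is already built into \eqref{form29}.
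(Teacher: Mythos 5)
Your proof is correct and follows the paper's own (very terse) argument line by line. Two small points where you were more careful than the paper itself: first, the paper's display preceding the lemma only asserts the bound $<1$, but uniqueness of the integer $K_{m+2}$ requires the sharper $<1/2$ that you correctly extract from $\rho_m = (2B_m)^{-1}$; second, the definition of $B_0$ as written, $\theta_{\max}|W_{M-1}W_MW_{M+1}|$, must be a typo for $\theta_{\max}^{|W_{M-1}W_MW_{M+1}|}$ (the product version grows only linearly in $|W_M|$ and cannot dominate $\Theta_1 = \theta(\tau_0)\theta(W_M)$), and you implicitly used the correct exponential reading in your estimate $\Theta_j\,\theta(W_{M+1}) \leq B_0$.
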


The first statement in (a) follows from the estimates \eqref{form13} and \eqref{form14-2}. We justify the second statement in (a): the number of choices for $K_{j}$, for $j \in \{1,2\}$, is the number of integers satisfying the equation $\Theta_{j}z_{j}\nu = K_{j} + \epsilon_{j}$ with parameters $\Theta_{j},z_{j}$ and $\nu \in [1,\theta(W_{M + 1}))$. By definition 
\begin{displaymath}
\Theta_{1} = \theta(\tau_{0})\theta(W_{M}) \lesssim_{\tau_{0}} B_{0} \quad \text{and} \quad \Theta_{2} = \theta(\tau_{0})\theta(W_{M - 1}W_{M}) \lesssim_{\tau_{0}} B_{0}.
\end{displaymath}
Recalling that $|z_{j}| \in [c,2c]$ by assumption, we obtain the desired estimate.

\subsubsection*{Heuristic digression} Before giving the final details, we make a little heuristic digression: assume for a moment (completely unrealistically) that \eqref{form14} holds for all $1 \leq m \leq M - 2$. Then, by Lemma \ref{lemma1}(b), the pair $(K_{m + 2},L_{m + 2})$ would always be uniquely determined by $(K_{m},L_{m})$ and $(K_{m + 1},L_{m + 1})$. This would imply that the \textbf{total} number of sequences $(K_{m},L_{m})_{m = 1}^{M}$ is the same as the number of initial quadruples $(K_{1},L_{1},K_{2},L_{2})$, that is, $\lesssim_{c,\tau_{0}} B_{0}^{4}$. So, how large is $B_{0}^{4}$ actually? Recall that $\omega \in G_{0}$ (as in \eqref{setG}), so
\begin{equation}\label{form16}
\liminf_{n \to \infty} \tfrac{1}{n} |\{1 \leq i \leq n : \omega_{i} = \tau_{0}\}| > \wp.
\end{equation} 
In particular, the gap $|W_{M + 1}| = n(M + 1) - n(M)$ between two consecutive indices $n(j)$ with $\omega(n(j)) = \tau_{0}$ becomes arbitrarily short relative to $n(M)$, as $M \to \infty$. It follows that, for any $\delta > 0$, we have
\begin{displaymath}
|W_{M - 1}W_{M}W_{M + 1}| \leq \delta |W_{1}\cdots W_{M}|
\end{displaymath}
for $M \gg_{\delta,\omega} 1$, and hence $B_{0}^{4} = (\theta_{\max}|W_{M - 1}W_{M}W_{M + 1}|)^{4} \leq \exp(C\delta  |W_{1}\cdots W_{M}|)$. Since $|W_{1}\cdots W_{M}|$ is comparable to $M$ for $M \gg_{\omega} 1$ by \eqref{form16}, this would complete the proof under the assumption that \eqref{form14} holds for all $1 \leq m \leq M - 2$.

\subsubsection*{The remaining details} We shall now continue the rigorous proof of Lemma \ref{fourierDecayLemma}. Recall from \eqref{form22} that $z_{1},z_{2} \in [-2c,-c] \cup [c,2c]$ and $\nu \in [1,\theta(W_{M + 1}))$ are such that
\begin{displaymath}
| \{ 1 \leq m \leq M : \max\{\|\Theta_{m}z_{1}\nu\|, \|\Theta_{m}z_{2}\nu\|\} < \rho \} | \geq (1 - \delta)M,
\end{displaymath}
and note that this can be re-written as
\begin{displaymath}
|\{ 1 \leq m \leq M : \max\{|\epsilon_{m}|,|\delta_{m}|\} \geq \rho \}| < \delta M.
\end{displaymath}
Consequently,
\begin{equation}\label{form31}
| \{ 1 \leq m \leq M - 2 : \max\{|\epsilon_{m}|,|\epsilon_{m + 1}|,|\epsilon_{m + 2}|,|\delta_{m}|,|\delta_{m + 1}|,|\delta_{m + 2}|\} \geq \rho \} | \leq 3\delta M.
\end{equation}
The property in \eqref{form31} may look similar to the useful condition \eqref{form14}, except that there is now a fixed number $\rho$ instead of $\rho_{m}$. Fortunately, it turns out that if $\rho > 0$ is taken small enough, depending on $\delta, |\calT|,\theta_{\max}$, then actually $\rho \leq \rho_{m}$ for most choices of $m$, and \eqref{form31} does provide useful information.

Let $N := |W_{1}\cdots W_{M}|$, and pick $M \geq 1$ (depending on $\omega$) so large that
\begin{equation}\label{form27}
\tfrac{M}{N} = \tfrac{1}{N} |\{1 \leq n \leq N : \omega_{n} = \tau_{0}\}| \geq \wp.
\end{equation}
This is possible by \eqref{form16}. Since $N = \sum_{1 \leq m \leq M} |W_{m}|$, we infer from Chebyshev's inequality and \eqref{form27} that
\begin{equation}\label{form30}
|\{1 \leq m \leq M : |W_{m}| \geq \tfrac{2}{\wp \delta}\}| \leq \frac{\wp \delta N}{2} \leq \frac{\delta M}{2}.
\end{equation}
Then, set
\begin{displaymath}
\rho := \tfrac{1}{2} (C\theta_{\max})^{-4\beta_{\max}/(\wp \delta)},
\end{displaymath} 
where $\beta_{\max} \leq \log \theta_{\max}/\log \theta_{\min}$ is familiar from \eqref{form28}. Now is also a good time to recall the number $\rho_{m}$, $1 \leq m \leq M - 2$, from \eqref{form28}. We next claim that
\begin{equation}\label{form32}
|\{1 \leq m \leq M - 2 : \rho \geq \rho_{m}\}| \leq \delta M.
\end{equation}
To see this, re-write the inequality $\rho \geq \rho_{m}$ as
\begin{displaymath}
(C\theta_{\max})^{-4\beta_{\max}/(\wp \delta)} \geq (C\theta_{\max})^{-\beta_{\max}(|W_{m - m}| + |W_{M - m - 1}|)}.
\end{displaymath}
This is equivalent to 
\begin{displaymath}
|W_{M - m}| + |W_{M - m - 1}| \geq 4/(\wp \delta),
\end{displaymath}
which implies $\max\{|W_{M - m}|, |W_{M - m - 1}|\} \geq 2/(\wp \delta)$. By \eqref{form30}, this is only possible for $\leq \delta M$ indices $m \in \{1,\ldots,M - 2\}$, as claimed.

Now, note that if $\max\{|\epsilon_{m}|,|\epsilon_{m + 1}|,|\epsilon_{m + 2}|,|\delta_{m}|,|\delta_{m + 1}|,|\delta_{m + 2}|\} \geq \rho_{m}$, then either
\begin{displaymath}
\max\{|\epsilon_{m}|,|\epsilon_{m + 1}|,|\epsilon_{m + 2}|,|\delta_{m}|,|\delta_{m + 1}|,|\delta_{m + 2}|\} \geq \rho \quad \text{or} \quad \rho \geq \rho_{m}.
\end{displaymath} 
Thus, combining \eqref{form31} and \eqref{form32}, we find that the index set
\begin{displaymath}
\calI := \calI_{M,z_{1},z_{2},\nu,\omega} := \{1 \leq m \leq M - 2 : \max\{|\epsilon_{m}|,|\epsilon_{m + 1}|,|\epsilon_{m + 2}|,|\delta_{m}|,|\delta_{m + 1}|,|\delta_{m + 2}|\} \geq \rho_{m}\}
\end{displaymath}
has cardinality
\begin{equation}\label{form33}
|\calI| \leq 4\delta M.
\end{equation}

Now, it is time to set aside the parameters $\omega,\nu$ for a moment. Let us just consider the following combinatorial question: Fix an index set $\calJ \subset \{1,\ldots,M - 2\}$ and consider all possible sequences of pairs of natural numbers $(k_{m},l_{m})_{m = 1}^{M}$ with the properties that
\begin{itemize}
\item[(i)] there are $A_{0} \in \N$ choices for the initial quadruple $(k_{1},l_{1},k_{2},l_{2})$, 
\item[(ii)] for $(k_{m},l_{m})$ and $(k_{m + 1},l_{m + 1})$ fixed, the pair $(k_{m + 2},l_{m + 2})$ can be chosen in at most $A_{m} \in \N$ different ways, and
\item[(iii)] for $m \in \{1,\ldots,M - 2\} \setminus \calJ$, the pair $(k_{m + 2},l_{m + 2})$ is uniquely determined by the pairs $(k_{m},l_{m})$ and $(k_{m + 1},l_{m + 1})$. 
\end{itemize}
How many sequences $(k_{m},l_{m})_{m = 1}^{M}$ are there satisfying (i)-(iii)? The answer is: at most
\begin{displaymath}
A_{0} \cdot \prod_{m \in \calJ} A_{m}
\end{displaymath}
sequences.

Now, returning to the main line of the proof, we recall from Lemma \ref{lemma1}, combined with \eqref{form33}, that the sequence $(K_{m},L_{m})_{m = 1}^{M}$ satisfies the conditions (i)-(iii) with constants $A_{0} \lesssim_{c,\tau_{0}} B_{0}^{4}$ and $A_{m} = (2B_{m} + 1)^{2}$, and with index set $\calJ = \calI_{M,z_{1},z_{2},\nu,\omega} = \calI$. Thus, there are at most
\begin{equation}\label{form34}
\lesssim_{c,\tau_{0}} \calB_{M} := \calB_{M,\omega,\nu} := B_{0}^{4} \cdot \prod_{m \in \calI} (2B_{m} + 1)^{2}
\end{equation}
sequences $(K_{m},L_{m})_{m = 1}^{M}$ corresponding to this $\calI$.

The proof so far has only used the assumption $\omega \in G_{0}$, but the rest of the argument only works for $\omega$ in a slightly smaller set $G \subset \Omega$ (which still has full probability). This is because of the quantity on the right hand side of \eqref{form34}, which depends on $\omega$; recall the definitions of $B_{0}$ and $B_{m}$ from Lemma \ref{lemma1} and \eqref{form29}. The quantity would be too large, if the lengths of the words $W_{1},\ldots,W_{M + 1}$ were very unevenly distributed. At the end of the proof of \cite[Proposition 5.4]{SSS} (see also \cite[Lemma 5.2]{SSS}), the following estimate is obtained, which holds for all $\omega \in G_{0}$ in a set of full probability (this set is finally the set $G$), and for all $M \geq 1$ sufficiently large (depending on $\omega$):
\begin{equation}\label{form35}
\mathop{\max_{\calI \subset \{0,\ldots,M - 2\}}}_{|\calI| \leq 4\delta M} \sum_{m \in \calI} (|W_{M - n}| + |W_{M - n - 1}|) \leq C \cdot \log(1/4\delta) \cdot \delta M,
\end{equation}
where $C \geq 1$ is a constant depending on $(\Omega,\tn)$.
In particular, for these sequences $\omega \in G_{0}$, and recalling from \eqref{form33} that $|\calI| \leq 4\delta M$, one obtains the following estimate from the definition of the numbers $B_{0},B_{m}$, and \eqref{form35}:
\begin{equation}\label{form36}
\calB_{M} \leq \exp(H \cdot \log(1/\delta) \cdot \delta M).
\end{equation} 
Here $H \geq 1$ is a constant depending only on $\theta_{\min},\theta_{\max}$ and $(\tn,\Omega)$ as desired. In fact, the contribution from the lonely factor $B_{0}^{4}$ could be handled in a more elementary way, as explained in the heuristic digression earlier, and only requires $\omega \in G_{0}$.

Now we have argued that the number of sequences $(K_{m},L_{m})_{m = 1}^{M}$ arising from the fixed index set $\calI_{M,z_{1},z_{2},\omega,\nu}$ is bounded by a constant times the right hand side of \eqref{form36}. To wrap up, we use Stirling's formula to observe that the number of subsets of $\{0,\ldots,M - 2\}$ of cardinality $\leq 4\delta M$ is bounded from above by $\leq \exp(C\delta M)$. So, the previous estimate for the number of sequences only changes by a constant factor if we take all relevant index sets into account!

Recalling \eqref{form37}, and the discussion following \eqref{form37}, the proof of Lemma \ref{fourierDecayLemma} is now complete.
\end{proof}

\section{Proof of the main result}\label{absoluteContinuity}
This section contains the proof of Theorem \ref{mainIntro}. The argument is very similar to that in \cite[Section 6]{SSS}. However, from a technical perspective, many steps in the proof in \cite{SSS} seem to require slight adjustment in our setting. Such adjustments would be difficult to explain properly without repeating virtually all of the details from \cite{SSS} -- even where no adjustments are necessary.

Here are the assumptions of the main theorem once more:

\begin{definition}\label{assumptions}
Let $U \subset \R$ be an open interval, and $m \geq 2$. We associate to each $u \in U$ a list of contractive similitudes on $\R$ of the form
\begin{equation}\label{form61}
\Psi_{u} := (\psi_{u,1},\ldots,\psi_{u,m}) = (\lambda_{1}x + t_{1}(u),\ldots,\lambda_{m}x + t_{m}(u)),
\end{equation} 
where 
\begin{displaymath}
\lambda_{1},\ldots,\lambda_{m} \in (0,1) \quad \text{and} \quad t_{1}(u),\ldots,t_{m}(u) \in \R, \quad u \in U.
\end{displaymath}
We make the following assumptions:
\begin{itemize}
\item[(A1)] The map $u \mapsto t_{j}(u)$ is real-analytic, and the family $\{\Psi_{u}\}_{u \in U}$ satisfies transversality of order $K$ for some $K \in \N$, recall Definition \ref{transversality}.
\item[(A2)] There exist three sequences $\mathbf{i},\mathbf{j},\mathbf{k} \in \{1,\ldots,m\}^{\N}$ such that none of the maps $u \mapsto \psi_{u,\mathbf{i}}(0)$, $u \mapsto \psi_{\mathbf{j},u}(0)$ and $u \mapsto \psi_{\mathbf{k},u}(0)$ is a convex combination of the other two.
\item[(A3)] For some probability vector $\mathbf{p} = (p_{1},\ldots,p_{m}) \in (0,1)^{m}$ with $p_{1} + \cdots + p_{m} = 1$, the \emph{similarity dimension}
\begin{displaymath}
s(\bar{\lambda},\mathbf{p}) := \frac{\sum_{j = 1}^{m} p_{j} \log p_{j}}{\sum_{j = 1}^{m} p_{j} \log \lambda_{j}},
\end{displaymath} 
where $\bar{\lambda}=(\lambda_1,\ldots,\lambda_m)$,
satisfies $s(\bar{\lambda},\mathbf{p}) > 1$.
\end{itemize}
\end{definition}

Here is the main result again:

\begin{thm}\label{main}
Let $\mu_{u}$, $u \in U$, be the self-similar measure associated to a pair $(\Psi_{u},\mathbf{p})$ satisfying the assumptions in Definition \ref{assumptions}. Then, there exists a set $E \subset U$ of Hausdorff dimension $0$ such that $\mu_{u} \ll \calL^{1}$ for all $u \in U \setminus E$.
\end{thm}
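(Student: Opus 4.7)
The plan is to combine three ingredients: the disintegration $\mu_u = \int_\Omega \eta_u^\omega \, d\tn(\omega)$ of Proposition \ref{disintegrationProp}, the Fourier decay output of Proposition \ref{fourierDecayProp}, and the parametrised version of Hochman's theorem (Proposition \ref{HCor}) which, under (A1) and (A3), forces $\Hd \mu_u = 1$ outside a zero-dimensional exceptional set. The target will in fact be the stronger statement $\mu_u \in L^2(\R)$; by the Cauchy--Schwarz inequality applied to the disintegration this reduces to
\begin{displaymath}
\int_\Omega \|\widehat{\eta_u^\omega}\|_2^2 \, d\tn(\omega) < \infty
\end{displaymath}
for all $u$ outside a zero-dimensional set.

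First I would fix a large $N \in \N$, form the type set $\calT = \calT^N$ as in \eqref{form48}, and check that the resulting random model meets the hypotheses of Proposition \ref{fourierDecayProp}. Starting from the three sequences $\mathbf{i},\mathbf{j},\mathbf{k}$ supplied by (A2), consider the three length-$3n$ compositions
\begin{displaymath}
\mathbf{I} = \mathbf{i}|_n \mathbf{j}|_n \mathbf{k}|_n, \quad \mathbf{J} = \mathbf{j}|_n \mathbf{k}|_n \mathbf{i}|_n, \quad \mathbf{K} = \mathbf{k}|_n \mathbf{i}|_n \mathbf{j}|_n,
\end{displaymath}
all of which share a common type $\tau_0 \in \calT^{3n}$. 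Because the innermost compositions collapse, $\psi_{u,\mathbf{I}}(0) \to \psi_{u,\mathbf{i}}(0)$ as $n \to \infty$ uniformly on compact subsets of $U$, and likewise for $\mathbf{J},\mathbf{K}$. Since (A2) excludes every non-trivial affine relation among $\psi_{u,\mathbf{i}}(0), \psi_{u,\mathbf{j}}(0), \psi_{u,\mathbf{k}}(0)$ viewed as functions of $u$, the same non-degeneracy transfers to the approximants for $n$ large, which is exactly the non-constant ratio condition \eqref{form47} for $\tau_0$. With $N := 3n$, Proposition \ref{fourierDecayProp} then produces a full-probability set $G \subset \Omega$ on which $\{u : \Fd \eta_u^\omega = 0\}$ has Hausdorff dimension zero. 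Using the scaling identity $\eta_u^{\omega,>k} = \lambda(\omega|_k)_\sharp \eta_u^{\sigma^k\omega}$ and the shift-invariance of $\tn$, a positive Fourier decay exponent $s_0 > 0$ is inherited by every tail of the infinite convolution $\eta_u^\omega$.

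Next I would convert this Fourier decay into an $L^2$ bound using the head--tail factorisation $\eta_u^\omega = \eta_u^{\omega,\leq k} \ast \eta_u^{\omega,>k}$. The plan is to apply Proposition \ref{HCor} in the finer form it takes in \cite[Section 6]{SSS}: for every $\epsilon > 0$, there is a zero-dimensional exceptional set $E_\epsilon \subset U$ such that, for $u \in U \setminus E_\epsilon$ and $\tn$-a.e.\ $\omega$, the head $\eta_u^{\omega,\leq k}$ satisfies $(1-\epsilon)$-Frostman-type decay down to its natural scale $\lambda(\omega|_k)$ for every sufficiently large $k$. Choosing $\epsilon < s_0$ and combining with the Fourier decay of the tail by a standard $s$-energy/Plancherel computation gives $\|\widehat{\eta_u^\omega}\|_2^2 < \infty$ with an integrable-in-$\omega$ bound; via Fubini and Cauchy--Schwarz this yields $\widehat{\mu_u} \in L^2(\R)$. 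Taking a countable union over $\epsilon = 1/j$ preserves the zero-dimensionality of the exceptional set, so $\mu_u \in L^2 \subset L^1$ for all $u$ outside a zero-dimensional $E \subset U$.

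The hard part will be producing the uniform-in-$\omega$ Frostman bound on the atomic heads $\eta_u^{\omega,\leq k}$ invoked in the previous paragraph. In the homogeneous setting of \cite{SSS}, the head is, up to rescaling, a self-similar measure already controlled by earlier dimension estimates, which makes the transfer of Frostman information from $\mu_u$ to the random heads essentially mechanical. In the non-homogeneous framework studied here, however, the head is a genuinely new atomic convolution whose dependence on $u$ must be tracked simultaneously at all scales using the real-analyticity of (A1) together with the transversality constant; it is precisely this step that will require the technical adjustments to \cite{SSS} foreshadowed at the opening of Section \ref{absoluteContinuity}.
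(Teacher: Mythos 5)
Your opening moves --- the disintegration $\mu_u = \int_\Omega \eta_u^\omega \dd\tn(\omega)$, the verification of the non-degeneracy hypothesis \eqref{form47} by cycling the prefixes $\mathbf{i}|_n, \mathbf{j}|_n, \mathbf{k}|_n$ to manufacture three words of a common type, and the remark that shift-invariance of $\tn$ propagates positive Fourier dimension to tails --- all track the paper's Lemma \ref{nonLinearityLemma} and Corollary \ref{fourierDecayCor2} closely. But the decomposition you subsequently convolve against is the wrong one, and this is not a cosmetic difference.

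You split $\eta_u^\omega$ as a \emph{finite head} times an \emph{infinite tail}, $\eta_u^{\omega,\leq k}\ast\eta_u^{\omega,>k}$. The head is a finite convolution of atomic measures, hence itself a purely atomic measure of Hausdorff dimension $0$. That is fatal for both routes you mention. Shmerkin's convolution lemma \cite[Lemma 2.1(2)]{Sh} --- which is what the paper actually invokes --- requires one factor to have Hausdorff dimension $1$, and an atomic head can never supply that. The $L^2$/Plancherel route does not close either: with a $(1-\epsilon)$-Frostman bound on the head valid only down to its resolution scale $\lambda(\omega|_k)$, and a tail whose Fourier transform decays like $(\lambda(\omega|_k)|\xi|)^{-s_0/2}$ for some (typically small) $s_0>0$, the integral $\int|\widehat{\eta_u^{\omega,\leq k}}|^2|\widehat{\eta_u^{\omega,>k}}|^2\dd\xi$ is of order $\lambda(\omega|_k)^{-\epsilon}\sum_{j\geq 0}2^{j(1-s_0)}$, which diverges as soon as $s_0<1$, and letting $k\to\infty$ only worsens the prefactor. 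There is also no ``finer form'' of Proposition \ref{HCor} in \cite[Section 6]{SSS} giving Frostman estimates for atomic heads; that step of your plan would have to be built from scratch, and the exponent count above shows it would not suffice anyway. Note additionally that you are aiming for $\widehat{\mu_u}\in L^2$, which is strictly stronger than absolute continuity and is not what the argument (the paper's or yours) delivers.

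The paper sidesteps all of this by using a genuinely different decomposition. Fix a second parameter $s\geq 1$ and write $\eta_u^\omega = \eta_{\mathrm{small},u}^\omega\ast\eta_{\mathrm{big},u}^\omega$, where $\eta_{\mathrm{small}}$ runs the convolution over indices $n$ divisible by $s$ and $\eta_{\mathrm{big}}$ over the complementary indices (see \eqref{form49}--\eqref{form49b}). The key point is that \emph{both} factors are infinite convolutions. The sparse factor $\eta_{\mathrm{small},u}^\omega$ inherits positive Fourier dimension from Corollary \ref{fourierDecayCor2}. The dense factor $\eta_{\mathrm{big},u}^\omega$ retains a fraction $1-1/s$ of the entropy, so for $s$ large its similarity dimension still exceeds $1$ by (A3); the exact-dimensionality and dimension-formula machinery of \cite[Theorems 1.2--1.3]{SSS}, together with Proposition \ref{HCor} to control the separation numbers $\Delta_n(u)$, then force $\Hd\eta_{\mathrm{big},u}^\omega=1$ for $\tn$-a.e.\ $\omega$ and all $u$ outside a zero-dimensional set (Proposition \ref{dimProp}). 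Shmerkin's lemma then applies verbatim, and the proof closes with a Frostman-measure contradiction argument plus a Borel-measurability check to justify Fubini. You should replace the head--tail split by the small--big split and drop the $L^2$ target.
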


We start by recording the following consequence of assumption (A1):

\begin{proposition}\label{HCor} Assume \textup{(A1)}, and define the numbers
\begin{displaymath}
\Delta_{n}(u) := \min \{|\psi_{u,\mathbf{i}}(0) - \psi_{u,\mathbf{j}}(0)| : \mathbf{i},\mathbf{j} \in \{1,\ldots,m\}^{n}, \: \mathbf{i} \neq \mathbf{j}\}.
\end{displaymath}  
Then, there exists a set $E \subset U$ with $\Hd E = 0$ such that
\begin{equation}\label{form65}
\limsup_{n \to \infty} \frac{\log \Delta_{n}(u)}{n} > -\infty, \qquad u \in U \setminus E.
\end{equation}
\end{proposition}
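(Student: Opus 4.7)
The plan is to show that $\Hd(E \cap U_0) = 0$ for $E := \{u \in U : \limsup_{n \to \infty} (\log \Delta_n(u))/n = -\infty\}$ and every compact subinterval $U_0 \subset U$; this suffices by countable stability of Hausdorff dimension. Only the transversality along the subsequence $(n_j)$ from (A1) will be used, together with one preliminary observation: since $\lambda_j \in (0,1)$ and each $t_j$ is real-analytic, the series representation
\begin{displaymath}
  \psi_{u,\mathbf{i}}(0) = \sum_{k \geq 1} \lambda_{i_1} \cdots \lambda_{i_{k-1}} t_{i_k}(u)
\end{displaymath}
(with the $k=1$ term equal to $t_{i_1}(u)$) converges uniformly on $U_0$ together with all its derivatives, and hence
\begin{displaymath}
  \max_{0 \leq \ell \leq K+1}\; \sup_{u \in U_0}\; |\Delta_{\mathbf{i},\mathbf{j}}^{(\ell)}(u)| \leq C_0
\end{displaymath}
uniformly in $n$ and in $\mathbf{i},\mathbf{j} \in \{1,\ldots,m\}^n$.

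The main step is to convert transversality into an efficient covering of sublevel sets. Fix $j \in \N$ and a pair $\mathbf{i} \neq \mathbf{j}$ in $\{1,\ldots,m\}^{n_j}$, and write $f := \Delta_{\mathbf{i},\mathbf{j}}$. Transversality supplies, at every $u_0 \in U_0$, an index $k(u_0) \in \{0,\ldots,K\}$ with $|f^{(k(u_0))}(u_0)| \geq c^{n_j}$, and the uniform $C_0$-bound on $f^{(k(u_0))+1}$ propagates this to $|f^{(k(u_0))}(u)| \geq c^{n_j}/2$ on an interval of length $\sim c^{n_j}/C_0$ around $u_0$. Covering $U_0$ by $\lesssim c^{-n_j}$ such intervals, the standard single-variable sublevel-set lemma -- if $|g^{(k)}| \geq \sigma$ on an interval $I$, then $\{u \in I : |g(u)| < \epsilon\}$ is contained in $O_K(1)$ subintervals of length $\lesssim_K (\epsilon/\sigma)^{1/K}$, proved by induction on $k \leq K$ -- applies on each. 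Taking the union over the at most $m^{2n_j}$ pairs, the set
\begin{displaymath}
  A_j^B := \{u \in U_0 : \Delta_{n_j}(u) < e^{-Bn_j}\}
\end{displaymath}
is then contained in $\lesssim m^{2n_j} c^{-n_j}$ intervals of diameter $\lesssim_K (e^{-Bn_j}/c^{n_j})^{1/K}$.

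Finally I would close with a Borel--Cantelli argument. For any $s > 0$, the cover just obtained yields
\begin{displaymath}
  \cH^s(A_j^B) \lesssim \exp\!\left( n_j \left[ 2\log m + |\log c|\left(1 + s/K\right) - sB/K \right] \right),
\end{displaymath}
which, after choosing $B = B(s)$ large enough, is geometrically summable in $j$. Since every $u \in E$ satisfies $\Delta_{n_j}(u) < e^{-Bn_j}$ for all sufficiently large $j$, we have $E \cap U_0 \subset \liminf_j A_j^B \subset \limsup_j A_j^B$, whence $\cH^s(E \cap U_0) = 0$ by Borel--Cantelli. As $s > 0$ was arbitrary, $\Hd(E \cap U_0) = 0$. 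The main technical obstacle is the sublevel-set lemma itself: the induction on the order of the derivative is where the transversality condition is genuinely exploited, while the uniform bound on one extra derivative is needed only to turn the pointwise transversality into a lower bound valid on a short interval around $u_0$.
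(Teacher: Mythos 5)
Your argument is correct and reaches the same conclusion as the paper, but by a recognisably different route. The paper (Proposition \ref{HCorAppendix}) also reduces to compact subintervals and covers sublevel sets of $\Delta_{\mathbf{i},\mathbf{j}}$, but it invokes Hochman's covering result \cite[Lemma 5.8]{Ho} as a black box -- giving $\lesssim_{C} c^{-2n_j}$ intervals of length $\lesssim (\epsilon^{n_j}/c^{n_j})^{1/2^K}$ per pair -- and then concludes via \emph{lower box dimension}: it shows $\underline{\dim}_{\mathrm{B}} E_{\epsilon}^{N} = o_{C,K,m}(\epsilon)$ uniformly in $N$, and sends $\epsilon \to 0$. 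You instead propagate the pointwise transversality to short intervals using the uniform bound on one extra derivative, apply a standard van der Corput/\L ojasiewicz-type sublevel estimate (exponent $1/K$ rather than Hochman's $1/2^K$) directly, and close the argument with $\mathcal{H}^{s}$-content and Borel--Cantelli rather than a box-dimension calculation. Both routes are valid; yours is slightly more self-contained in that it replaces the black-box appeal to \cite[Lemma 5.8]{Ho} by an explicit induction, and it works directly with Hausdorff measure rather than passing through box dimension -- though you do leave the sublevel-set lemma itself as a sketch. The sharper exponent $1/K$ versus $1/2^K$ is immaterial for the qualitative conclusion, and the exceptional set you obtain is, as desired, exactly the one appearing in the rest of the paper.

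One small point worth stating explicitly if you write this up: when you propagate $|f^{(k(u_0))}(u_0)| \geq c^{n_j}$ to $|f^{(k(u_0))}| \geq c^{n_j}/2$ on an interval of length $\sim c^{n_j}/C_0$, the index $k(u_0)$ is frozen on that interval, which is exactly what the sublevel-set lemma requires; and the interval cover of a fixed short interval can genuinely be taken to consist of $O_K(1)$ pieces of length $\lesssim_K (\epsilon/\sigma)^{1/K}$ because a function with non-vanishing $k$th derivative has at most $k$ zeros, so its $\epsilon$-sublevel set has at most $k+1$ components, each of length controlled by the measure bound. With those two observations made explicit, the argument is complete.
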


The statement above is superficially the same as \cite[Theorem 5.9]{Ho}, but recall that we are using a definition of transversality somewhat different from Hochman's. We postpone the proof to the appendix, see Proposition \ref{HCorAppendix}.

Now we start the proof of Theorem \ref{main} by fixing a number $N \geq 1$. We recall the types $\calT = \calT^{N}$ defined in \eqref{form48}. Then, for every $u \in U$, we follow the procedure of Sections \ref{randomModel}-\ref{disintegration} to write
\begin{equation}\label{form75}
\mu_{u} = \int_{\Omega} \eta^{\omega}_{u} \dd\tn(\omega),
\end{equation}
where 
\begin{displaymath}
\eta^{\omega}_{u} = \Asterisk_{n \geq 1} \bigg[ \prod_{j = 1}^{n - 1} \lambda(\omega_{j}) \bigg]_{\sharp}\eta_{u}(\omega_{n}),
\end{displaymath}
and
\begin{displaymath}
\eta_{u}(\tau) = \frac{1}{m(\tau)} \sum_{j = 1}^{m(\tau)} \delta_{\psi_{u,j}^{\tau}(0)}.
\end{displaymath} 
We recall that the maps in
\begin{displaymath}
\Psi_{u}(\tau) = (\psi_{u,1}^{\tau},\ldots,\psi_{u,m(\tau)}^{\tau}) = (\lambda(\tau)x + t_{1}(\tau,u),\ldots,\lambda(\tau)x + t_{m(\tau)}(\tau,u)), \quad u \in U, \: \tau \in \calT,
\end{displaymath}
were obtained (via the procedure described in Section \ref{disintegration}) as $N$-fold compositions of the maps in $\Psi_{u}$ in \eqref{form61}, and they all have a common contraction ratio $\lambda(\tau)$, depending only on $\tau \in \calT$.

Next, as in \cite{SSS}, we fix another integer parameter $s \geq 1$. Then, for $\omega \in \Omega$ and $u \in U$ fixed, we split the infinite convolution defining $\eta^{\omega}_{u}$ as $\eta^{\omega}_{u} = \eta_{\mathrm{small},u}^{\omega} \ast \eta_{\mathrm{big},u}^{\omega}$, where
\begin{equation}\label{form49}
\eta^{\omega}_{\mathrm{small},u} := \left( \Asterisk_{s \text{ divides } n} \left[ \prod_{j = 1}^{n - 1} \lambda(\omega_{j}) \right]_{\sharp} \eta_{u}(\omega_{n}) \right),
\end{equation}
and
\begin{equation}\label{form49b}
\eta^{\omega}_{\mathrm{big},u} := \left( \Asterisk_{s \text{ does not divide } n} \left[ \prod_{j = 1}^{n - 1} \lambda(\omega_{j}) \right]_{\sharp} \eta_{u}(\omega_{n}) \right).
\end{equation} 

The plan will be to show that, for generic choices of $\omega,u$, the measure $\eta^{\omega}_{\mathrm{small},u}$ has positive Fourier dimension, whereas $\eta^{\omega}_{\mathrm{big},u}$ has Hausdorff dimension one (if $N$ and $s$ were chosen large enough). These observations are eventually combined in Section \ref{conclusion} to complete the proof of Theorem \ref{main}. If the reader is not familiar with the argument in \cite{SSS}, then it might be a good idea to start with reading the (short) Section \ref{conclusion} to see where we are headed.

\subsection{Fourier decay for $\eta^{\omega}_{\mathrm{small},u}$}
We infer the following corollary from Proposition \ref{fourierDecayProp}:

\begin{cor}\label{fourierDecayCor}
Assume the same notation as in the previous section. Assume that there exists $\tau_{0} \in \calT$, and three indices $1 \leq i_{1} < i_{2} < i_{3} \leq m(\tau_{0})$ such that the map $u \mapsto t_{i_{3}}(\tau_{0},u) - t_{i_{1}}(\tau_{0},u)$ is not identically zero, and
\begin{equation}\label{form551}
u \mapsto \frac{t_{i_{2}}(\tau_{0},u) - t_{i_{1}}(\tau_{0},u)}{t_{i_{3}}(\tau_{0},u) - t_{i_{1}}(\tau_{0},u)}, \quad u \in U,
\end{equation}
is non-constant. 
Then, there exists a set $G \subset \Omega$ with $\tn(G) = 1$ such that if $\omega \in G$, then
\begin{displaymath}
\Hd \{u \in U : \Fd \eta_{\mathrm{small},u}^{\omega} = 0\} = 0.
\end{displaymath}  
\end{cor}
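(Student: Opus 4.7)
The plan is to reduce the corollary to Proposition \ref{fourierDecayProp} by identifying $\eta^{\omega}_{\mathrm{small},u}$ as the random infinite convolution attached to a suitable new random model, obtained by grouping the types of $\calT$ into blocks of length $s$.

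Concretely, set $\tilde{\calT} := \calT^{s}$, with probabilities $\tilde{q}(W_{1},\ldots,W_{s}) := q(W_{1})\cdots q(W_{s})$, and let the block map $\pi \colon \Omega \to \tilde{\Omega} := \tilde{\calT}^{\N}$ be defined by $\pi(\omega)_{k} := (\omega_{(k-1)s+1},\ldots,\omega_{ks})$; by construction, $\pi_{\sharp}\tn$ is the product measure $\tilde{\tn}$ on $\tilde{\Omega}$ built from $\tilde{q}$. To each $\tilde{\tau} = (W_{1},\ldots,W_{s}) \in \tilde{\calT}$, attach an equicontractive family of contraction ratio $\tilde{\lambda}(\tilde{\tau}) := \lambda(W_{1})\cdots \lambda(W_{s}) \in (0,1)$ with $\tilde{m}(\tilde{\tau}) := m(W_{s})$ translations
\begin{displaymath}
\tilde{t}_{i}(\tilde{\tau},u) := \lambda(W_{1})\cdots \lambda(W_{s-1}) \cdot t_{i}(W_{s},u), \qquad 1 \leq i \leq m(W_{s}).
\end{displaymath}
These maps are real-analytic and uniformly bounded in $u$, so the new data meets the requirements of Definition \ref{FourierDecayAss}. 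A direct comparison of \eqref{form49} with \eqref{form42}, using the telescoping identity
\begin{displaymath}
\prod_{j=1}^{ks-1} \lambda(\omega_{j}) = \bigg[\prod_{l=1}^{k-1} \tilde{\lambda}(\pi(\omega)_{l})\bigg] \cdot \lambda(W_{1}) \cdots \lambda(W_{s-1})
\end{displaymath}
with $(W_{1},\ldots,W_{s}) := \pi(\omega)_{k}$, shows that the infinite convolution $\tilde{\eta}_{u}^{\tilde{\omega}}$ associated to the new model equals $\eta^{\omega}_{\mathrm{small},u}$ whenever $\tilde{\omega} = \pi(\omega)$.

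Next, I would verify the hypothesis of Proposition \ref{fourierDecayProp} for the new model with the distinguished type $\tilde{\tau}_{0} := (\tau_{0},\ldots,\tau_{0})$, whose last block is the $\tau_{0}$ provided in the statement. Since the translations of $\tilde{\tau}_{0}$ are simply a nonzero scalar multiple $c := \lambda(\tau_{0})^{s-1}$ of those of $\tau_{0}$, the three indices $i_{1} < i_{2} < i_{3}$ still satisfy $\tilde{t}_{i_{3}}(\tilde{\tau}_{0},u) - \tilde{t}_{i_{1}}(\tilde{\tau}_{0},u) = c\cdot(t_{i_{3}}(\tau_{0},u) - t_{i_{1}}(\tau_{0},u)) \not\equiv 0$, and the ratio in \eqref{form47} literally reduces to \eqref{form551}, hence is non-constant on $U$.

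Applying Proposition \ref{fourierDecayProp} to the new model then produces a set $\tilde{G} \subset \tilde{\Omega}$ with $\tilde{\tn}(\tilde{G}) = 1$ such that $\Hd \{u \in U : \Fd \tilde{\eta}_{u}^{\tilde{\omega}} = 0\} = 0$ for every $\tilde{\omega} \in \tilde{G}$. Setting $G := \pi^{-1}(\tilde{G})$ gives $\tn(G) = 1$ by the measure-preserving property of $\pi$, and the identification $\tilde{\eta}_{u}^{\pi(\omega)} = \eta^{\omega}_{\mathrm{small},u}$ finishes the proof. The argument is mostly bookkeeping; the most delicate point is choosing the prefactor $\lambda(W_{1})\cdots \lambda(W_{s-1})$ in the definition of $\tilde{t}_{i}$ correctly, so that the new model's convolution matches the ``small'' subproduct of \eqref{form49} term by term.
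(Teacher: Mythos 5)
Your proof is correct and takes essentially the same route as the paper: introduce the block model $\calT^{s}$, check the hypothesis of Proposition~\ref{fourierDecayProp} using the all-$\tau_{0}$ block type, and pull the full-measure set back along the block map. One small but genuine refinement is your inclusion of the prefactor $\lambda(W_{1})\cdots\lambda(W_{s-1})$ in the block translations $\tilde{t}_{i}(\tilde\tau,u)$; with this the identity $\tilde\eta_{u}^{\pi(\omega)} = \eta_{\mathrm{small},u}^{\omega}$ is exact term by term, whereas the unscaled translations in the paper's \eqref{form54} match \eqref{form49} only up to a per-factor rescaling by $\lambda(\omega_{(k-1)s+1})\cdots\lambda(\omega_{ks-1})$ — the conclusion survives because the ratio in \eqref{form47} is scale-invariant, but your bookkeeping is the cleaner one.
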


Here $t_{j}(\tau,u)$, $1 \leq j \leq m(\tau)$, are the translation vectors of the similitudes in $\Psi_{u}(\tau)$. For Proposition \ref{fourierDecayProp} to be applicable, we first need to realise $\eta^{\omega}_{\mathrm{small},u}$ as a typical measure arising from a random model as in Section \ref{randomModel}. Here we mostly follow the proof of \cite[Lemma 6.4]{SSS}.

\begin{proof}[Proof of Corollary \ref{fourierDecayCor}]
We first define a new set of types $\calT' := \calT^{s}$. For $\tau' := (\omega_{1},\ldots,\omega_{s}) \in \calT'$, we define the contraction ratio
\begin{equation}\label{form50}
\lambda(\tau') := \lambda(\omega_{1})\cdots \lambda(\omega_{s}).
\end{equation}
We also define the probabilities
\begin{displaymath}
q'(\tau') := q(\omega_{1})\cdots q(\omega_{s}), \qquad \tau' = (\omega_{1},\ldots,\omega_{s}) \in \calT',
\end{displaymath}
where $q(\tau) > 0$ are the probabilities associated with the initial types $\tau \in \calT$. Clearly
\begin{displaymath}
\sum_{\tau' \in \calT'} q'(\tau') = 1.
\end{displaymath}
We let $\tn'$ be the product probability measure on the space $\Omega' := (\calT')^{\N}$ induced by the probabilities $q'(\tau')$. Then, we define the similitudes
\begin{equation}\label{form54}
\Psi_{u}(\tau') := \{\lambda(\tau')x + t_{1}(\omega_{s},u),\ldots,\lambda(\tau')x + t_{m(\omega_{s})}(\omega_{s},u)\},
\end{equation}
for $\tau' = (\omega_{1},\ldots,\omega_{s}) \in \calT'$. Now that these types and similitudes have been defined, the formulae in Section \ref{randomModel} give rise to the measures
\begin{equation}\label{form51}
\eta_{u}(\omega_{1},\ldots,\omega_{s}) = \frac{1}{m(\omega_{s})} \sum_{j = 1}^{m(\omega_{s})} t_{j}(\omega_{s},u) = \eta_{u}(\omega_{s}), \quad (\omega_{1},\ldots,\omega_{s}) \in \calT',
\end{equation}
and finally
\begin{equation}\label{form52}
\eta^{\omega'}_{u} = \Asterisk_{n \geq 1} \left[ \prod_{j = 1}^{n - 1} \lambda(\omega_{j}') \right]_{\sharp} \eta_{u}(\omega_{n}'),
\end{equation}
where $\omega_{j}',\omega_{n}' \in \calT'$ for $j,n \geq 1$.

Next, we "embed" the random measures $\eta_{\mathrm{small},u}^{\omega}$ inside the family of random measures defined in \eqref{form52}. To this end, if $\omega = (\omega_{1},\omega_{2},\ldots) \in \Omega$, we define the sequence $F(\omega) \in \Omega'$ by the obvious formula
\begin{equation}\label{F}
F(\omega) = ((\omega_{1},\ldots,\omega_{s}),(\omega_{s + 1},\ldots,\omega_{2s}),\ldots).
\end{equation}
Then, it follows from the definitions \eqref{form49} and \eqref{form50}-\eqref{form52} that
\begin{displaymath}
\eta_{u}^{F(\omega)} = \eta_{\mathrm{small},u}^{\omega}, \qquad \omega \in \Omega,
\end{displaymath}
where the left hand side refers to the measure defined in \eqref{form52}. Further, we note that $F_{\sharp}\tn = \tn'$, where $\tn$ is the probability on $\Omega = \calT^{\N}$ induced by the probabilities $q(\tau)$. Hence, the conclusion of Corollary \ref{fourierDecayCor} will follow once we manage to produce a set $G' \subset \Omega'$ of full $\tn'$-probability such that 
\begin{displaymath}
\Hd \{u \in U : \Fd \eta^{\omega'}_{u} = 0\} = 0, \qquad \omega' \in G'.
\end{displaymath}
Here we finally use Proposition \ref{fourierDecayProp}: all we need to find is a type $\tau' \in \calT'$, and three indices $1 \leq i_{1} < i_{2} < i_{3} \leq m(\tau')$ such that the map $u \mapsto t_{i_{3}}'(\tau',u) - t_{i_{1}}'(\tau',u)$ is not identically zero, and 
\begin{equation}\label{form70}
u \mapsto \frac{t_{i_{2}}'(\tau',u) - t_{i_{1}}'(\tau',u)}{t_{i_{3}}'(\tau',u) - t_{i_{1}}'(\tau',u)}, \quad u \in U,
\end{equation}
is non-constant. (We also note that the assumption $\sup \{|t_{j}(\tau',u)| : u \in U, \, \tau \in \mathcal{T'}, \, 1 \leq j \leq m(\tau')\} < \infty$ from Definition \ref{FourierDecayAss} can be arranged by splitting $U$ to countably many intervals, since the maps $u \mapsto t_{j}(\tau',u) \in (0,1)$ are continuous, each, and $\calT'$ is finite.) 

Returning to \eqref{form70}, we recall from \eqref{form54} that the translation vectors associated to the type $(\omega_{1},\ldots,\omega_{s}) \in \calT'$ coincide with the translation vectors of the type $\omega_{s} \in \calT$. Thus, we can -- for example -- take $\tau' := (\tau_{0},\tau_{0},\ldots,\tau_{0}) \in \calT^{s}$, where $\tau_{0} \in \calT$ is the type appearing in \eqref{form551}. The proof of Corollary \ref{fourierDecayCor} is complete.
\end{proof}

In order to use Corollary \ref{fourierDecayCor} in the proof of Theorem \ref{main}, we need to secure its main hypothesis. This is the content of the next lemma.

\begin{lemma}\label{nonLinearityLemma}
Under the assumptions \textup{(A1)}-\textup{(A2)}, there are arbitrarily large values of $N \geq 1$ such that the following holds. There exists a type $\tau_{N} \in \calT^{N}$, and three values $1 \leq i_{1} < i_{2} < i_{3} \leq m(\tau_{N})$ such that the map $u \mapsto t_{i_{3}}(\tau_{N},u) - t_{i_{1}}(\tau_{N},u)$ is not identically zero, and 
\begin{displaymath}
u \mapsto \frac{t_{i_{2}}(\tau_{N},u) - t_{i_{1}}(\tau_{N},u)}{t_{i_{3}}(\tau_{N},u) - t_{i_{1}}(\tau_{N},u)}, \qquad u \in U,
\end{displaymath} 
is non-constant.
\end{lemma}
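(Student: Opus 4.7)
My plan is to use (A2) to build, for each large $N$, three length-$3N$ words of a common type whose limiting behaviour as $N \to \infty$ forces the desired non-degeneracy. Fix the sequences $\mathbf{i},\mathbf{j},\mathbf{k}$ furnished by (A2), and abbreviate $a(u) := \psi_{u,\mathbf{i}}(0)$, $b(u) := \psi_{u,\mathbf{j}}(0)$, $c(u) := \psi_{u,\mathbf{k}}(0)$. For each $N \geq 1$, define three words of length $3N$ by cyclically rotating the blocks:
\begin{align*}
\alpha_N &:= (\mathbf{i}|_N,\mathbf{j}|_N,\mathbf{k}|_N),\\
\beta_N &:= (\mathbf{j}|_N,\mathbf{k}|_N,\mathbf{i}|_N),\\
\gamma_N &:= (\mathbf{k}|_N,\mathbf{i}|_N,\mathbf{j}|_N).
\end{align*}
These are permutations of a common string, so they share a type $\tau_N \in \calT^{3N}$; and they are pairwise distinct for $N$ large, since (A2) forces $\mathbf{i},\mathbf{j},\mathbf{k}$ to be distinct as infinite sequences. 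Thus $\alpha_N,\beta_N,\gamma_N$ correspond to three distinct indices in the list $\Psi(\tau_N)$, and I denote the corresponding translations $\psi_{u,\alpha_N}(0),\psi_{u,\beta_N}(0),\psi_{u,\gamma_N}(0)$ by $t_1^{(N)}(u),t_2^{(N)}(u),t_3^{(N)}(u)$.

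Because the innermost two blocks of each word contribute a contraction factor tending to $0$, a routine composition estimate yields the pointwise convergence $t_1^{(N)}(u) \to a(u)$, $t_2^{(N)}(u) \to b(u)$, and $t_3^{(N)}(u) \to c(u)$ for every $u \in U$. From (A2) we immediately get $c \not\equiv a$: otherwise $c = 1\cdot a + 0\cdot b$ is a (trivial) convex combination of $a$ and $b$. Consequently, by the identity principle for real-analytic functions, $t_3^{(N)} - t_1^{(N)}$ is not identically zero on $U$ as soon as $N$ is large enough. Hence the first condition of the lemma is automatic for all large $N$, and it remains to control the ratio.

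Suppose for contradiction that the ratio $\rho_N(u) := (t_2^{(N)}(u) - t_1^{(N)}(u))/(t_3^{(N)}(u) - t_1^{(N)}(u))$ is a constant function $\theta_N \in \R$ on $U$ for all sufficiently large $N$. Fixing any $u_0 \in U$ with $c(u_0) \neq a(u_0)$, pointwise convergence of the $t_i^{(N)}$ forces $\theta_N \to \theta := (b(u_0) - a(u_0))/(c(u_0) - a(u_0))$. Passing to the limit in the identity $t_2^{(N)}(u) - t_1^{(N)}(u) = \theta_N (t_3^{(N)}(u) - t_1^{(N)}(u))$ then gives, for every $u \in U$,
\begin{displaymath}
b(u) = (1-\theta)\,a(u) + \theta\,c(u).
\end{displaymath}
The proof concludes with a case split on $\theta$ that contradicts (A2). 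If $\theta \in [0,1]$, then $b$ is already a convex combination of $a$ and $c$. If $\theta < 0$, rearranging yields $a = \tfrac{1}{1-\theta}\,b + \tfrac{-\theta}{1-\theta}\,c$, with both coefficients positive and summing to $1$, exhibiting $a$ as a convex combination of $b$ and $c$. If $\theta > 1$, a similar rearrangement gives $c = \tfrac{\theta-1}{\theta}\,a + \tfrac{1}{\theta}\,b$, exhibiting $c$ as a convex combination of $a$ and $b$. Every alternative violates (A2). The only genuinely nonroutine step is this final case analysis on $\theta$; the rest reduces to the routine bookkeeping of contractive compositions.
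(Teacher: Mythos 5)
Your proof is correct and follows essentially the same strategy as the paper's: cyclically permute three blocks built from the sequences $\mathbf{i},\mathbf{j},\mathbf{k}$ of (A2) to obtain three distinct words of a common type, and then reduce the claim to a case analysis on a putative constant value of the ratio, ruling out each case as a violation of the ``no convex combination'' hypothesis. The only differences are organizational: the paper first shows the limiting ratio $\zeta = (\psi_{u,\mathbf{j}}(0)-\psi_{u,\mathbf{i}}(0))/(\psi_{u,\mathbf{k}}(0)-\psi_{u,\mathbf{i}}(0))$ is non-constant, fixes $M$ so that evaluation at two points $u_1,u_2$ survives any truncation at level $M$, and then uses repeated blocks $(\mathbf{i}_0\mathbf{j}_0\mathbf{k}_0)^N$; you argue contrapositively via pointwise convergence as the block length grows. (One small point worth making explicit: you produce three \emph{words} $\alpha_N,\beta_N,\gamma_N$ which need to be relabelled so that their positions in $\Psi(\tau_N)$ satisfy $i_1 < i_2 < i_3$, and the non-constancy condition should then be checked to be invariant under this relabelling; it is, since $(b-a)/(c-a)$ is constant iff $(a-b)/(c-b)$ is, etc., once the pairwise differences are known to be not identically zero, which your convergence argument already yields.)
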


\begin{proof}
Let $\mathbf{i},\mathbf{j},\mathbf{k} \in \{1,\ldots,m\}^{\N}$ be the sequences specified in (A2). In other words, none of the maps $u \mapsto \psi_{u,\mathbf{i}}(0)$, $u \mapsto \psi_{u,\mathbf{j}}(0)$, and $u \mapsto \psi_{u,\mathbf{k}}(0)$ can be expressed as a convex combination of the two others. In particular, 
\begin{equation}\label{form73}
\psi_{u,\mathbf{i}}(0) \not\equiv \psi_{u,\mathbf{j}}(0) \quad \text{and} \quad \psi_{u,\mathbf{i}}(0) \not\equiv \psi_{u,\mathbf{k}}(0).
\end{equation}
Thus, by analyticity, $u \mapsto \psi_{u,\mathbf{k}}(0) - \psi_{u,\mathbf{i}}(0)$ has a discrete set of zeroes on $U$, and
\begin{displaymath}
u \mapsto \zeta(u) := \frac{\psi_{u,\mathbf{j}}(0) - \psi_{u,\mathbf{i}}(0)}{\psi_{u,\mathbf{k}}(0) - \psi_{u,\mathbf{i}}(0)}
\end{displaymath} 
is well-defined and analytic in the complement of those points. Moreover, $\zeta$ is non-constant, because if $\zeta \equiv C$ for some $C \in [0,1]$, one can solve
\begin{displaymath}
\psi_{u,\mathbf{j}}(0) \equiv C \cdot \psi_{u,\mathbf{k}}(0) + (1 - C) \cdot \psi_{u,\mathbf{i}}(0),
\end{displaymath}
violating the choice of $\mathbf{i},\mathbf{j},\mathbf{k}$. The cases $C < 0$ and $C > 1$ are also ruled out by similar calculations: for example, if $\zeta \equiv C \in (-1,0)$, then one can instead solve
\begin{displaymath}
\psi_{u,\mathbf{i}}(0) \equiv \frac{1}{1 - C} \cdot \psi_{u,\mathbf{j}}(0) + \frac{-C}{1 - C} \cdot \psi_{u,\mathbf{k}}(0),
\end{displaymath}
again violating the choice of $\mathbf{i},\mathbf{j},\mathbf{k}$. We now pick $u_{1},u_{2} \in U$ such that $\zeta(u_{1}),\zeta(u_{2})$ are finite and distinct. 

Then, we note that for any $u \in U$, in particular $u \in \{u_{1},u_{2}\}$, it holds that
\begin{equation}\label{form74}
\sup\{|\psi_{u,\mathbf{i}}(0) - \psi_{u,\mathbf{w}}(0)| : \mathbf{w} \in \{1,\dots,m\}^{\ast}, \: \mathbf{w}|_{n} = \mathbf{i}|_{n}\} \to 0,
\end{equation}
as $n \to \infty$. The same holds with $\mathbf{i}$ replaced by $\mathbf{j}$ or $\mathbf{k}$. Applying \eqref{form74} at the points $u_{1},u_{2} \in U$, we infer that there exists $M \in \N$ such that the following holds. If $\mathbf{i}',\mathbf{j}',\mathbf{k}' \in \{1,\ldots,m\}^{\ast}$ are any finite sequences with 
\begin{displaymath}
\mathbf{i}'|_{M} = \mathbf{i}|_{M} =: \mathbf{i}_{0}, \quad \mathbf{j}'|_{M} = \mathbf{j}|_{M} =: \mathbf{j}_{0}, \quad \text{and} \quad \mathbf{k}'|_{M} = \mathbf{k}|_{M} =: \mathbf{k}_{0},
\end{displaymath}
then $u \mapsto \psi_{u,\mathbf{k}'}(0) - \psi_{u,\mathbf{i}'}(0)$ is not identically zero, and the map
\begin{equation}\label{form64}
u \mapsto \frac{\psi_{u,\mathbf{j}'}(0) - \psi_{u,\mathbf{i}'}(0)}{\psi_{u,\mathbf{k}'}(0) - \psi_{u,\mathbf{i}'}(0)}, \qquad u \in U,
\end{equation} 
is non-constant (it suffices to check that the map takes different values at $u_{1}$ and $u_{2}$). 

We apply this to sequences $\mathbf{i}',\mathbf{j}',\mathbf{k}'$ of the form
\begin{displaymath}
\mathbf{i}' := (\mathbf{i}_{0}\mathbf{j}_{0}\mathbf{k}_{0})^{N}, \quad \mathbf{j}' := (\mathbf{j}_{0}\mathbf{k}_{0}\mathbf{i}_{0})^{N}, \quad \text{and} \quad \mathbf{k}' := (\mathbf{k}_{0}\mathbf{i}_{0}\mathbf{j}_{0})^{N},
\end{displaymath}
which have common length $3MN$, and more importantly common type in $\calT^{3MN}$, say $\tau$, recalling the definition \eqref{form48}. Then the numbers $\psi_{u,\mathbf{i}'}(0),\psi_{u,\mathbf{j}'}(0)$ and $\psi_{u,\mathbf{k}'}(0)$ coincide with certain translation vectors $t_{i_{1}}(\tau,u), t_{i_{2}}(\tau,u)$ and $t_{i_{3}}(\tau,u)$, with $1 \leq i_{1} < i_{2} < i_{3} \leq m(\tau)$. Thus, the non-constancy of the map in \eqref{form64} is equivalent to the claim of the lemma.
\end{proof}

Combining the previous lemma with Corollary \ref{fourierDecayCor} finally gives the following consequence, which can be applied -- eventually -- in the proof of Theorem \ref{main}.

\begin{cor}\label{fourierDecayCor2}
Under the assumptions \textup{(A1)}-\textup{(A2)}, and if $N \geq 1$ is chosen as in Lemma \ref{nonLinearityLemma}, there exists a set $G \subset \Omega$ with $\tn(G) = 1$ such that if $\omega \in G$, then
\begin{displaymath}
\Hd \{u \in U : \Fd \eta_{\mathrm{small},u}^{\omega} = 0\} = 0.
\end{displaymath}
\end{cor}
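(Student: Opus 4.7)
The proof will be a direct concatenation of the two preceding results, and nothing further is really required. The plan is, first, to invoke Lemma \ref{nonLinearityLemma} in order to fix a value of $N \geq 1$ (as large as we like) together with a type $\tau_{0} := \tau_{N} \in \calT = \calT^{N}$ and indices $1 \leq i_{1} < i_{2} < i_{3} \leq m(\tau_{0})$ such that $u \mapsto t_{i_{3}}(\tau_{0},u) - t_{i_{1}}(\tau_{0},u)$ is not identically zero on $U$ and
\begin{displaymath}
u \mapsto \frac{t_{i_{2}}(\tau_{0},u) - t_{i_{1}}(\tau_{0},u)}{t_{i_{3}}(\tau_{0},u) - t_{i_{1}}(\tau_{0},u)}, \qquad u \in U,
\end{displaymath}
is non-constant. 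This is precisely the hypothesis \eqref{form551} needed to trigger Corollary \ref{fourierDecayCor}.

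The second (and final) step is to apply Corollary \ref{fourierDecayCor} with this choice of $\tau_{0}$ and $i_{1},i_{2},i_{3}$. That corollary directly produces a set $G \subset \Omega$ with $\tn(G) = 1$ such that $\Hd \{u \in U : \Fd \eta^{\omega}_{\mathrm{small},u} = 0\} = 0$ for all $\omega \in G$, which is exactly the conclusion of Corollary \ref{fourierDecayCor2}. I do not expect any obstacle: the corollary and the lemma were designed to dovetail, and all the genuine work—constructing the random model, verifying the Erd\H{o}s--Kahane-style Fourier decay on $\eta^{\omega}_{u}$ in Proposition \ref{fourierDecayProp}, embedding $\eta^{\omega}_{\mathrm{small},u}$ inside that model via the map $F$ in \eqref{F}, and producing a type with the non-degenerate translation-ratio property out of the abstract hypothesis (A2)—has already been carried out. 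The only minor point worth double-checking is that the parameter $N$ furnished by Lemma \ref{nonLinearityLemma} is compatible with the definition of $\eta^{\omega}_{\mathrm{small},u}$ in \eqref{form49}, which it is, since $N$ and $s$ are independent parameters in the construction of Section \ref{disintegration}.
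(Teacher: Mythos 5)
Your proof is correct and matches the paper's own (implicit) argument exactly: the paper simply states that ``Combining the previous lemma with Corollary \ref{fourierDecayCor} finally gives the following consequence'' and gives no further details. Your observation that $N$ and $s$ are independent parameters, so that fixing $N$ via Lemma \ref{nonLinearityLemma} does not interfere with the $\eta^{\omega}_{\mathrm{small},u}$ construction, is a valid and worthwhile sanity check.
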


\subsection{Dimension of $\eta_{\mathrm{big},u}^{\omega}$}
In this section, we study the dimension of the measures $\eta_{\mathrm{big},u}^{\omega}$, again following \cite{SSS} closely. Here is the goal:

\begin{proposition}\label{dimProp}
If the parameters $N, s \geq 1$ are chosen large enough, then there exists a set $E \subset U$ of Hausdorff dimension zero such that for all $u \in U \setminus E$
\begin{displaymath}
\Hd \eta_{\mathrm{big},u}^{\omega} = 1 \quad \text{for } \tn\text{ a.e. } \omega \in \Omega.
\end{displaymath}
In fact, the set $E$ coincides with the set from Proposition \ref{HCor}.
\end{proposition}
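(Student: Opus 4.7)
The plan is to follow the strategy of \cite[Section 6]{SSS} closely: re-express $\eta^\omega_{\mathrm{big},u}$ as an infinite convolution fitting the random model of Section \ref{randomModel}, compute its similarity dimension via the strong law of large numbers, and invoke a random analogue of Hochman's theorem, for which the required exponential separation is supplied by Proposition \ref{HCor}.

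For the embedding, I would mimic the map $F$ from \eqref{F}: group the indices $\omega_{(k-1)s+1},\ldots,\omega_{ks}$ into a single super-type $\tilde\tau_k \in \calT^s$, with super-contraction $\lambda(\tilde\tau_k) := \prod_{i=1}^{s}\lambda(\omega_{(k-1)s+i})$ and atomic base measure formed by convolving the $\eta_u(\omega_{(k-1)s+i})$ at appropriate scales for $i=1,\ldots,s-1$, so that the $i=s$ factor is precisely the one dropped in \eqref{form49b}. Under this identification $\eta^\omega_{\mathrm{big},u}$ becomes an infinite convolution in the framework of Section \ref{randomModel}. The similarity dimension can then be computed by the strong law of large numbers. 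A direct calculation with $q(\tau) = m(\tau)p_1^{N_1}\cdots p_m^{N_m}$ and $\lambda(\tau) = \lambda_1^{N_1}\cdots\lambda_m^{N_m}$ gives $\E[\log\lambda(\omega_1)] = N\sum_k p_k\log\lambda_k$, and Stirling's formula applied to $m(\tau) = \binom{N}{N_1,\ldots,N_m}$ yields $\E[\log m(\omega_1)] = NH(\mathbf{p}) + O(\log N)$, where $H(\mathbf{p}) = -\sum_k p_k\log p_k$. Since only a fraction $1-1/s$ of the atomic factors survive in $\eta^\omega_{\mathrm{big},u}$, the resulting similarity dimension for $\tn$-a.e.\ $\omega$ equals $(1-1/s)\,s(\bar\lambda,\mathbf{p}) + O((\log N)/N)$. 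Since $s(\bar\lambda,\mathbf{p}) > 1$ by (A3), I would first pick $s$ large enough that $(1-1/s)\,s(\bar\lambda,\mathbf{p}) > 1$, and then $N$ large enough to absorb the $O((\log N)/N)$ correction.

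To upgrade this similarity-dimension lower bound to a Hausdorff-dimension lower bound, I would use exponential separation. Any two atoms $x_1, x_2$ of the truncated measure $\eta^{\omega,K}_{\mathrm{big},u}$ can be written $x_1 - x_2 = \psi_{u,\mathbf{V}^{(1)}}(0) - \psi_{u,\mathbf{V}^{(2)}}(0)$ for distinct words $\mathbf{V}^{(i)} \in \{1,\ldots,m\}^{NK}$ obtained by filling in identical skip-blocks at positions divisible by $s$. Hence $|x_1 - x_2| \geq \Delta_{NK}(u)$, and Proposition \ref{HCor} provides, for every $u \in U \setminus E$ (with $\Hd E = 0$), constants $c = c(u) > 0$ and infinitely many $K \in \N$ with $\Delta_{NK}(u) \geq c^{NK}$. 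This is exactly the exponential separation hypothesis required by the random version of Hochman's theorem used in \cite[Section 6]{SSS}, and invoking it yields $\Hd\eta^\omega_{\mathrm{big},u} = \min(1,\text{similarity dim}) = 1$ for $\tn$-a.e.\ $\omega$. The main technical obstacle is verifying that the Hochman/SSS entropy machinery continues to function in the present non-homogeneous setting, where the super-contractions $\lambda(\tilde\tau)$ vary across $\tilde\tau \in \calT^s$ rather than being constant; fortunately this variability is confined to a compact subinterval of $(0,1)$, so the entropy-scale arguments should carry over with only cosmetic changes to the bookkeeping.
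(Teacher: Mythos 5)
Your proposal follows essentially the same route as the paper: re-express $\eta^\omega_{\mathrm{big},u}$ as a measure of the random model of Section \ref{randomModel} over the super-types $\calT' = \calT^s$, show the similarity dimension of that model exceeds $1$ by choosing $N$ and $s$ large (the paper cites \cite[Lemma 6.2(v)]{SSS} and a computation on the last page of \cite{SSS} where you re-derive the estimate by Stirling, and the order in which you fix $s$ and $N$ is immaterial), and then invoke the random Hochman-type theorem \cite[Theorem 1.3]{SSS} together with the exponential separation supplied by Proposition \ref{HCor}. Two small remarks: the worry at the end is unfounded, since \cite[Theorem 1.3]{SSS} is already stated for the random model with type-dependent contraction ratios, so no new entropy machinery is needed; and the separation bound should read $\Delta_{NKs}(u)$ rather than $\Delta_{NK}(u)$, because truncating to $K$ super-types produces words of length $NKs$ over $\{1,\ldots,m\}$ after the dummy maps are inserted, matching the inequality $\Delta_n(u,\omega') \geq \Delta_{Nns}(u)$ in \eqref{form68}.
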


The first task is, again, to realise $\eta_{\mathrm{big},u}^{\omega}$ as a typical measure arising from a random model, as in Section \ref{randomModel}. The details are the same as in the proof of \cite[Lemma 6.5]{SSS}, but we record most of them here for completeness. As in the previous section, we define $\calT' := (\calT)^{s}$, and we also define
\begin{equation}\label{form56}
\lambda(\tau') := \lambda(\omega_{1}) \cdots \lambda(\omega_{s}) \quad \text{and} \quad q(\tau') := q(\omega_{1})\cdots q(\omega_{s})
\end{equation}
for $\tau' = (\omega_{1},\ldots,\omega_{s}) \in \calT'$, as before. We also let $\tn'$ be the product probability measure on $\Omega' = (\calT')^{\N}$ induced by the numbers $q(\tau')$. Defining the translation vectors for the similitudes in $\Psi_{u}(\tau')$ is a little trickier in this case. Here is how to do it: for $\tau' = (\omega_{1},\ldots,\omega_{s}) \in \calT'$ fixed, we first let
\begin{displaymath}
\calI(\tau') := \prod_{l = 1}^{s - 1} \{1,\ldots,m(\omega_{l})\}.
\end{displaymath}
Then, for any $\mathbf{i} = (i_{1},\ldots,i_{s - 1}) \in \calI(\tau')$, we define the translation vector
\begin{displaymath}
t_{\mathbf{i}}(\tau',u) := \sum_{l = 1}^{s - 1} \left[\prod_{j = 1}^{l - 1} \lambda(\omega_{j}) \right] t_{i_{l}}(\omega_{l},u),
\end{displaymath}
where $t_{i_{l}}(\omega_{l},u)$, $i_{l} \in \{1,\ldots,m(\omega_{l})\}$, is the $(i_{l})^{th}$ translation vector of the family $\Psi_{u}(\omega_{l})$. Then, we set
\begin{equation}\label{form55}
\Psi_{u}(\tau') := \{\lambda(\tau')x + t_{\mathbf{i}}(\tau',u) : \mathbf{i} \in \calI(\tau')\}.
\end{equation}
As in the previous section, we define the map $F \colon \Omega \to \Omega'$ by the formula \eqref{F}. Then, one can check, see \cite[(61)]{SSS}, that
\begin{displaymath}
\eta_{u}^{F(\omega)} = \eta^{\omega}_{\mathrm{big},u}, \qquad \omega \in \Omega,
\end{displaymath}
where the left hand side now refers to the measures generated by the model with the types and similitudes introduced in \textbf{this} section. Since $F_{\sharp}\tn = \tn'$, we can now proceed to study the $\tn$ almost sure dimension of the measures $\eta_{\mathrm{big},u}^{\omega}$, $\omega \in \Omega$, by studying the $\tn'$ almost sure dimension of the measures $\eta_{u}^{\omega'}$, $\omega' \in \Omega'$. 

Before doing this, however, we record an observation which requires staring at the precise structure of $\Psi_{u}(\tau')$.

\begin{remark}
Let $n \geq 1$, and let $(\omega_{1}',\ldots,\omega_{n}') \in (\calT')^{N}$. For each $\omega_{j}$, $1 \leq j \leq n$, pick two similitudes 
\begin{displaymath}
\psi_{u,\mathbf{v}_{j}}^{\omega_{j}'},\psi_{u,\mathbf{w}_{j}}^{\omega_{j}'} \in \Psi_{u}(\omega_{j}'), \qquad \mathbf{v}_{j},\mathbf{w}_{j} \in \calI(\omega_{j}'),
\end{displaymath}
and consider their $n$-fold compositions
\begin{displaymath}
f_{u,\mathbf{v}} = \psi_{u,\mathbf{v}_{1}}^{\omega_{1}'} \circ \cdots \circ \psi_{u,\mathbf{v}_{n}}^{\omega_{n}'} \quad \text{and} \quad f_{u,\mathbf{w}} = \psi_{u,\mathbf{w}_{1}}^{\omega_{1}'} \circ \cdots \circ \psi_{u,\mathbf{w}_{n}}^{\omega_{n}'}.
\end{displaymath} 
For reasons to become apparent a little later, we are interested in relating the quantity $|f_{u,\mathbf{v}}(0) - f_{u,\mathbf{w}}(0)|$ to the numbers $\Delta_{n}(u)$ defined in Proposition \ref{HCor}. This would be completely straightforward if $f_{u,\mathbf{v}},f_{u,\mathbf{w}}$ were obtained as certain compositions of mappings in $\Psi_{u}$, but this is not quite the case.

To understand the problem better, consider first $\tau' = (\omega_{1},\ldots,\omega_{s}) \in \calT'$, pick $\mathbf{i} = (i_{1},\ldots,i_{s - 1}) \in \mathcal{I}(\tau')$, and note that the map 
\begin{equation}\label{form67}
x \mapsto \lambda(\omega_{1})\cdots \lambda(\omega_{s - 1})x + t_{\mathbf{i}}(\tau',u)
\end{equation} 
is, in fact, the composition
\begin{displaymath}
\psi_{u,i_{1}}^{\omega_{1}} \circ \cdots \circ \psi_{u,i_{s - 1}}^{\omega_{s - 1}},
\end{displaymath}
where $\psi_{u,i_{j}}^{\omega_{j}}$ is the $(i_{j})^{th}$ similitude in $\Psi_{u}(\omega_{j})$. Unfortunately, the contraction ratio of the the map in \eqref{form67} differs from the contraction ratio of the map $x \mapsto \lambda(\tau') + t_{\mathbf{i}}(\tau',u) \in \Psi_{u}(\tau')$ by a factor of $\lambda(\omega_{s})$.

Despite this issue, the \textbf{difference} $f_{u,\mathbf{v}} - f_{u,\mathbf{w}}$ can be expressed as the \textbf{difference} of compositions in $\Psi_{u}$. We explain this in the case $n = 1$, that is, when
\begin{displaymath}
f_{u,\mathbf{v}}(0) - f_{u,\mathbf{w}}(0) =\psi_{u,\mathbf{i}}^{\tau'}(0) - \psi_{u,\mathbf{j}}^{\tau'}(0), \qquad \mathbf{i},\mathbf{j} \in \calI(\tau'),
\end{displaymath}
for some $\tau' = (\omega_{1},\ldots,\omega_{s}) \in \calT'$. We write $\mathbf{i} = (i_{1},\ldots,i_{s - 1})$ and $\mathbf{j} = (j_{1},\ldots,j_{s - 1})$, where $1 \leq i_{l},j_{l} \leq m(\omega_{l})$, and we let $\psi_{u}^{\omega_{s}}$ be \textbf{any} similitude in $\Psi_{u}(\omega_{s})$. Then,
\begin{displaymath}
\psi_{u,\mathbf{i}}^{\tau'} - \psi_{u,\mathbf{j}}^{\tau'} = (\psi_{u,i_{1}}^{\omega_{1}} \circ \cdots \circ \psi_{u,i_{s - 1}}^{\omega_{s - 1}} \circ \psi_{u}^{\omega_{s}}) - (\psi_{u,j_{1}}^{\omega_{1}} \circ \cdots \circ \psi_{u,j_{s - 1}}^{\omega_{s - 1}} \circ \psi_{u}^{\omega_{s}}),
\end{displaymath} 
where both the maps on the right hand side are $(Ns)$-fold compositions of maps in $\Psi_{u}$. For general $n \geq 1$, the difference $f_{u,\mathbf{v}} - f_{u,\mathbf{w}}$ can always be expressed as the difference of $(Nns)$-fold of compositions of maps in $\Psi_{u}$, by repeating the above idea $n$ times and hence, adding altogether $n$ "dummy" maps instead of one; for more details, see the proof of Lemma 6.5 (and, in particular, the equation (62)) in \cite{SSS}.

In particular, we have
\begin{equation}\label{form68}
|f_{u,\mathbf{v}}(0) - f_{u,\mathbf{w}}(0)| \geq \Delta_{Nns}(u), \qquad \mathbf{v},\mathbf{w} \in \prod_{j = 1}^{n} \mathcal{I}(\omega_{j}'), \: \mathbf{v} \neq \mathbf{w},
\end{equation}
by the above observations.
\end{remark}

To study the $\tn'$ almost sure dimension of the measures $\eta_{u}^{\omega'}$, $\omega' \in \Omega'$, we need to import more technology from \cite{SSS}. First, it follows from \cite[Theorem 1.2]{SSS} that the measures $\eta^{\omega'}_{u}$ are exact-dimensional $\tn'$ almost surely: for $u \in U$, there exists a constant $\alpha_{u} \in [0,1]$ such that
\begin{displaymath}
\exists \: \lim_{r \to 0} \frac{\log \eta^{\omega'}_{u}(B(x,r))}{\log r} = \alpha_{u}
\end{displaymath}
for $\tn'$ almost all $\omega' \in \Omega'$, and for $\eta^{\omega'}_{u}$ almost every $x \in \R$. In particular,
\begin{displaymath}
\Hd \eta^{\omega'}_{u} = \alpha_{u}
\end{displaymath}
for $\tn'$ almost every $\omega' \in \Omega'$. Another concept we need to recall from \cite[Section 1.3]{SSS} is the \emph{similarity dimension} of a random model. Given a collection of types $\calT''$, equipped with contraction ratios $\lambda(\tau'') \in (0,1)$ and probabilities $q(\tau'') \in (0,1)$, the \emph{similarity dimension} of the family of random measures $\eta^{\omega''}$ generated by this data (through the procedure described in Section \ref{randomModel}) is the number
\begin{displaymath}
s(\{\eta^{\omega''}\}_{\omega'' \in \Omega''}) := \left(\int_{\Omega''} \log(\lambda(\omega_{1}'')) \dd\tn''(\omega'') \right)^{-1} \int_{\Omega''} \log \frac{1}{m(\omega_{1}'')} \dd\tn''(\omega'').
\end{displaymath}
Here $\tn''$ is the product probability measure on $\Omega'' := (\calT'')^{\N}$ induced by the probabilities $q(\tau'')$, $\tau'' \in \calT''$. In fact, we have no use for the explicit expression above (which can be found in \cite[Section 1.3]{SSS}), but we need the concept -- twice. 

First, it follows from \cite[Lemma 6.2(v)]{SSS} that if $\delta > 0$, and the parameter $N \geq 1$ is chosen large enough, depending only on $\delta$ and the probability vectors $\mathbf{p}$, then 
\begin{equation}\label{form53}
s(\{\eta^{\omega}_{u}\}_{\omega \in \Omega}) \geq (1 - \delta)s(\bar{\lambda},\mathbf{p}), \qquad u \in U.
\end{equation}
Here $s(\bar{\lambda},\mathbf{p})$ and $\{\eta^{\omega}_{u}\}_{\omega \in \Omega}$ were introduced around the statement of Theorem \ref{main}. We note, as is clear from the proof of \cite[Lemma 6.2(v)]{SSS}, that the choice of $N$ in \eqref{form53} depends only on $\delta > 0$, and the fixed probability vector $\mathbf{p}$. In particular, recalling our main assumption $1 < s(\bar{\lambda},\mathbf{p}) =: 1 + \epsilon$, we may choose $N \geq 1$ so large that also
\begin{equation}\label{form57}
s(\{\eta^{\omega}_{u}\}_{\omega \in \Omega}) > 1 + \epsilon/2, \qquad u \in U,
\end{equation}
where $\epsilon > 0$ does not depend on the choice of $u \in U$.

Now we have fixed $N \geq 1$, and next we fix $s \geq 1$. On the very last page of \cite{SSS}, the following relationship between the similarity dimensions of $\{\eta^{\omega}_{u}\}_{\omega \in \Omega}$ and $\{\eta^{\omega'}_{u}\}_{\omega' \in \Omega'}$ is established: 
\begin{displaymath}
s(\{\eta^{\omega'}_{u}\}_{\omega' \in \Omega'}) = (1 - \tfrac{1}{s}) s(\{\eta^{\omega}_{u}\}_{\omega \in \Omega}), \qquad u \in U.
\end{displaymath}
Here $\{\eta_{u}^{\omega'}\}_{\omega' \in \Omega'}$ is the random model discussed in this section, recall \eqref{form56}-\eqref{form55}. So, by taking $s \geq 1$ large enough, depending on $\epsilon > 0$ alone, we can ensure that
\begin{equation}\label{form58}
s(\{\eta_{u}^{\omega'}\}_{\omega' \in \Omega'}) \geq 1 + \epsilon/3, \qquad u \in U.
\end{equation}
We summarise the previous conclusions for a fixed $u \in U$:
\begin{itemize}
\item To show that 
\begin{displaymath}
\Hd \eta^{\omega}_{\mathrm{big},u} = 1 \quad \text{for } \tn \text{ a.e. } \omega \in \Omega,
\end{displaymath}
it suffices to prove that
\begin{displaymath}
\Hd \eta^{\omega'}_{u} = 1 \quad \text{for } \tn' \text{ a.e. } \omega' \in \Omega'.
\end{displaymath}
\item The map $\omega' \mapsto \Hd \eta^{\omega'}_{u}$ has $\tn'$ almost surely constant value $\alpha_{u}$.
\item The similarity dimension of the model $\{\eta_{u}^{\omega'}\}_{\omega' \in \Omega'}$ exceeds one.
\end{itemize}
So, to wrap up the proof of Proposition \ref{dimProp}, it remains to argue that
\begin{equation}\label{form60}
\alpha_{u} = \min\{s(\{\eta^{\omega'}_{u}\}_{\omega' \in \Omega'}),1\} = 1, \qquad u \in U \setminus E,
\end{equation}
where $\Hd E = 0$. This will follow from a combination of \cite[Theorem 1.3]{SSS} and \cite[Theorem 1.8]{Ho}. 

For $\omega' = (\omega_{1}',\omega_{2}',\ldots) \in \Omega'$ and a fixed $n \geq 1$, define the index set
\begin{displaymath}
\calI_{n}'(\omega') := \prod_{j = 1}^{n} \calI(\omega_{j}').
\end{displaymath} 
Here $\calI(\omega_{j}')$ is the index set used in \eqref{form55} to define the similitudes $\Psi_{u}(\omega_{j}')$, $\omega_{j}' \in \calT'$. Now, given $u \in U$, and a word $\mathbf{v} = (\mathbf{v}_{1},\ldots,\mathbf{v}_{n}) \in \calI_{n}'(\omega')$, consider the map $f_{u,\mathbf{v}}$, obtained as the $n$-fold composition
\begin{equation}\label{form66}
f_{u,\mathbf{v}} = \psi^{\omega_{1}'}_{u,\mathbf{v}_{1}} \circ \cdots \circ \psi^{\omega_{n}'}_{u,\mathbf{v}_{n}},
\end{equation}
where $\psi_{u,\mathbf{v}_{j}}^{\omega_{j}'}(x) = \lambda(\omega_{j}')x + t_{\mathbf{v}_{j}}(\omega_{j}',u) \in \Psi_{u}(\omega_{j}')$, as defined in \eqref{form55}. Then, we define the quantity
\begin{displaymath}
\Delta_{n}(u,\omega') := \begin{cases} \min\{|f_{u,\mathbf{v}}(0) - f_{u,\mathbf{w}}(0)| : \mathbf{u},\mathbf{w} \in \calI_{n}'(\omega'), \: \mathbf{v} \neq \mathbf{w}\}, & \text{if } |\calI_{n}'(\omega')| \geq 2, \\ 0, & \text{if } |\calI_{n}'(\omega')| = 1. \end{cases}
\end{displaymath}
Now, \eqref{form58} and \cite[Theorem 1.3]{SSS} show that
\begin{equation}\label{form59}
\alpha_{u} < 1 \quad \Longrightarrow \quad \tn\left\{ \omega' \in \Omega' : \frac{\log \Delta_{n}(u,\cdot)}{n} \leq -M \right\} \to 1 \text{ for all } M > 0.
\end{equation}
So, to prove \eqref{form60}, it suffices to show that the right hand side of \eqref{form59} can occur only for $u$ in a zero-dimensional set. This is an easy consequence of Proposition \ref{HCor} and \eqref{form68}. Indeed, \eqref{form68} shows that $\Delta_{n}(u,\omega') \geq \Delta_{Nns}(u)$ whenever $|\mathcal{I}_{n}'(\omega')| \geq 2$. 

Evidently, for $\tn'$ almost every $\omega' \in \Omega'$ we have $|\mathcal{I}_{n}'(\omega')| \geq 2$ for all $n \geq 1$ sufficiently large, depending on $\omega'$. It follows that $\tn'(G_{n}') \to 1$ as $n \to \infty$, where
\begin{displaymath}
G_{n}' := \{\omega' \in \Omega' : |\mathcal{I}_{n}'(\omega')| \geq 2\}.
\end{displaymath}
Recall the exceptional $E$ from Proposition \ref{HCor}: if $u \in U \setminus E$, it follows that there exists $M > 0$, and a sequence $(n_{j})_{j \in \N}$ of natural numbers, depending on $u$, such that
\begin{displaymath}
\frac{\log \Delta_{N n_{j} s}(u)}{n_{j}} \geq -M, \qquad j \in \N.
\end{displaymath} 
Consequently,
\begin{align*}
\tn'\left\{\omega' \in \Omega' : \frac{\log \Delta_{n_{j}}(u,\omega')}{n_{j}} \geq -M\right\} &\geq \tn \left\{\omega' \in G_{n_{j}}' : \frac{\log \Delta_{N n_{j} s}(u)}{n_{j}} \geq -M \right\} \\ &= \tn'(G_{n_{j}}') \to 1.
\end{align*} 
We conclude that the right hand side of \eqref{form59} does not hold, and hence $\alpha_{u} = 1$ for all $u \in U \setminus E$. The proof of Proposition \ref{dimProp} is complete.

\subsection{Concluding the proof of the main theorem}\label{conclusion} We now conclude the proof of Theorem \ref{main} (also known as Theorem \ref{mainIntro}). We start by making a counter-assumption that
\begin{displaymath}
\Hd E > \epsilon > 0,
\end{displaymath}
where $E := \{u \in U : \mu_{u} \not\ll \calL^{1}\}$. We record that $E$ is a $G_{\delta}$-set. Indeed, following \cite[Proposition 8.1]{PSS} we first consider
\begin{displaymath}
E_{\beta} := \left\{u \in U : \exists \text{ open } V_{u} \subset \R \text{ such that } \mu_{u}(V_{u}) > 1 - \beta \text{ and } \calL^{1}(V_{u}) < \beta\right\}.
\end{displaymath}
Since $\mu_{u'} \rightharpoonup \mu_{u}$ as $u' \to u$ by the continuity of the function $u \mapsto t_{j}(u)$, the sets $E_{\beta}$ are open. We have thus shown the claim since $E = \bigcap_{\beta > 0} E_{\beta}$ and, by \cite[Proposition 3.1]{PSS}, self-similar measures are of pure type. We may now use Frostman's lemma to pick $\sigma \in \mathcal{M}(E)$ such that $\sigma(B(x,r)) \le r^{\epsilon}$ for all $x \in \R$ and $r > 0$.

Now, recall the decomposition of $\mu_{u}$ to the measures $\eta_{u}^{\omega}$ from \eqref{form75}, and the subsequent decomposition of the measures $\eta_{u}^{\omega}$ to the pieces $\eta_{\mathrm{small},u}^{\omega}$ and $\eta_{\mathrm{large},u}^{\omega}$. 
From Corollary \ref{fourierDecayCor2} and Fubini's theorem we infer that for $\sigma$ almost every $u \in U$,
\begin{equation}\label{form76}
\Fd \eta_{\mathrm{small},u}^{\omega} > 0
\end{equation}
for $\tn$ almost every $\omega \in \Omega$. The use of Fubini's theorem is legitimate, because the set
\begin{displaymath} \{(\omega,u) \in \Omega \times U : \Fd \eta^{\omega}_{\mathrm{small},u} = 0\} \end{displaymath}
is Borel by same the argument we used in Corollary \ref{c:borel}. Also, from Proposition \ref{dimProp} we deduce that for $\sigma$ almost every $u \in U$,
\begin{equation}\label{form77}
\Hd \eta_{\mathrm{big},u}^{\omega} = 1
\end{equation}
for $\tn$ almost every $\omega \in \Omega$ (here Fubini's theorem was not used, so we do not need check that $\{(\omega,u) : \Hd \eta_{\mathrm{big},u}^{\omega} = 1\}$ is Borel). It follows that for $\sigma$ almost every $u \in U$, the conclusions \eqref{form76}-\eqref{form77} hold simultaneously for $\tn$ almost every $\omega \in \Omega$. But whenever \eqref{form76}-\eqref{form77} both hold, \cite[Lemma 2.1(2)]{Sh} implies that
\begin{displaymath}
\eta_{u}^{\omega} = \eta_{\mathrm{big},u}^{\omega} \ast \eta_{\mathrm{small},u}^{\omega} \ll \calL^{1}.
\end{displaymath}
In particular, for $\sigma$ almost all $u \in U$, we have $\eta_{u}^{\omega} \ll \calL^{1}$ for $\tn$ almost all $\omega \in \Omega$, and then $\mu_{u} \ll \calL^{1}$ by the decomposition \eqref{form75}. So, we have now argued that $\mu_{u} \ll \calL^{1}$ for $\sigma$ almost every $u \in U$, which contradicts the choice of $\sigma$. The proof of Theorem \ref{main} is complete.

\appendix

\section{Order $K$ transversality and the size of exceptions}\label{appendixA}

Recall that we used a notion of order $K$ transversality somewhat different from Hochman's convention in \cite[Definition 5.6]{Ho}. We recall our definition:

\begin{definition}[Transversality of order $K$]\label{transversalityAppendix}
Let $U \subset \R$ be an open interval, and let $\{\Psi_{u}\}_{u \in U}$ be a parametrised family of similitudes of the form
\begin{displaymath}
\Psi_{u} := (\psi_{u,1},\ldots,\psi_{u,m}) = (\lambda_{1}(u)x + t_{1}(u),\ldots,\lambda_{m}(u)x + t_{m}(u)).
\end{displaymath} 
Note that we allow also the contraction parameters $\lambda_{j}(u)$ to depend on $u \in U$. Let $K \in \N$, and assume that the maps $u \mapsto \lambda_{j}(u)$ and $u \mapsto t_{j}(u)$ are $K$ times continuously differentiable for all $1 \leq j \leq m$. For $u \in U$, write 
\begin{displaymath}
\Delta_{\mathbf{i},\mathbf{j}}(u) := \psi_{u,\mathbf{i}}(0) - \psi_{u,\mathbf{j}}(0), \qquad \mathbf{i},\mathbf{j} \in \{1,\ldots,m\}^{n}, \; n \in \N.
\end{displaymath}
The family $\{\Psi_{u}\}_{u \in U}$ \emph{satisfies transversality of order $K$} if there exists a constant $c > 0$ and a sequence of natural numbers $(n_{j})_{j \in \N}$ such that
\begin{displaymath}
\max_{k \in \{0,\ldots,K\}} |\Delta_{\mathbf{i},\mathbf{j}}^{(k)}(u)| \geq c^{n_{j}}, \qquad u \in U, \: \mathbf{i},\mathbf{j} \in \{1,\ldots,m\}^{n_{j}}, \: \mathbf{i} \neq \mathbf{j}, \: j \in \N.
\end{displaymath} 
Here $\Delta^{(k)}_{\mathbf{i},\mathbf{j}}$ is the $k^{th}$ derivative of $\Delta_{\mathbf{i},\mathbf{j}}$.
\end{definition}

Recall from Proposition \ref{HCor} that we need to show that the following set has Hausdorff dimension zero:
\begin{equation}\label{setE}
E := \left\{u \in U : \lim_{n\to\infty}\frac{\log \Delta_{n}(u)}{n} = -\infty \right\},
\end{equation}
where $\Delta_{n}(u) := \min\{|\Delta_{\mathbf{i},\mathbf{j}}(u)| : \mathbf{i},\mathbf{j} \in \{1,\ldots,m\}^{n}, \: \mathbf{i} \neq \mathbf{j}\}$. This follows from from transversality of order $K$, as in Definition \ref{transversalityAppendix}:

\begin{proposition}\label{HCorAppendix}
Assume that $\{\Psi_{u}\}_{u \in \N}$ is a parametrised family of similitudes satisfying transversality of some finite order $K \in \N$, as in Definition \ref{transversalityAppendix}. Then, the set $E$ in \eqref{setE} has Hausdorff dimension zero.
\end{proposition}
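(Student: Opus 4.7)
The plan is to show, for every $s > 0$, that $\mathcal{H}^s(E) = 0$. First I would localise: fix a compact subinterval $I \subset U$ and prove $\Hd(E \cap I) = 0$; since $U$ is covered by countably many such intervals, this suffices. On $I$, the analytic parameter functions and their derivatives up to order $K$ are uniformly bounded, and $\sup_{u \in I} \lambda_j(u) < 1$ for every $j$; by direct term-by-term differentiation of $\psi_{u,\mathbf{i}}(0) = \sum_{\ell \geq 1} t_{i_\ell}(u) \prod_{r < \ell} \lambda_{i_r}(u)$, the derivatives $\Delta^{(k)}_{\mathbf{i},\mathbf{j}}$ for $0 \leq k \leq K$ are bounded in $L^\infty(I)$ by a single constant $M = M(I,K)$, independent of $\mathbf{i},\mathbf{j}$ and of their common length.

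The crucial analytic tool is the following standard sublevel set estimate: if $f \in C^K(I)$ satisfies $\|f^{(k)}\|_{L^\infty(I)} \leq M$ for $0 \leq k \leq K$, and if at every $u \in I$ there exists $0 \leq k \leq K$ with $|f^{(k)}(u)| \geq \delta$, then for any $\epsilon \in (0,\delta)$ the set $\{u \in I : |f(u)| \leq \epsilon\}$ is contained in at most $N_{0}$ intervals of length at most $C_{0}(\epsilon/\delta)^{1/K}$, where $N_{0}, C_{0}$ depend only on $K$, $M$, and $|I|$. I would either cite this (it is the now-classical Peres--Schlag type lemma, cf.\ \cite[Lemma 3.1]{PSS}) or prove it by a short induction on $K$: the $K = 0$ case is trivial, and for $K \geq 1$ one splits $I$ into the portion where $|f^{(K)}| \geq \delta/2$ (on which Taylor expansion gives the desired bound on $\{|f| \leq \epsilon\}$) and its complement (on which the inductive hypothesis applies to $f'$ with the same $\delta/2$).

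With this in hand, I would apply the lemma to $f = \Delta_{\mathbf{i},\mathbf{j}}$ with $\delta = c^{n_j}$ (from Definition \ref{transversalityAppendix}) and $\epsilon = e^{-An_j}$ for a large parameter $A > -\log c$ to be chosen. Each pair gives a cover by $\lesssim 1$ intervals of length $\lesssim e^{-n_j(A + \log c)/K}$. Since there are at most $m^{2n_j}$ pairs $(\mathbf{i},\mathbf{j}) \in (\{1,\ldots,m\}^{n_j})^{2}$, the set
\begin{equation*}
F_{j,A} := \{u \in I : \Delta_{n_j}(u) \leq e^{-An_j}\}
\end{equation*}
satisfies
\begin{equation*}
\mathcal{H}^{s}_{\infty}(F_{j,A}) \lesssim_{K,M,|I|} \exp\bigl(n_{j}\bigl[2\log m - s(A + \log c)/K\bigr]\bigr).
\end{equation*}
Choosing $A$ so large that $s(A + \log c)/K > 2\log m + 1$ makes this summable in $j$. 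From the definition of $E$, if $u \in E$ then $\log \Delta_{n}(u)/n \to -\infty$, so $\Delta_{n_j}(u) \leq e^{-An_j}$ for all sufficiently large $j$; hence $E \cap I \subset \liminf_{j} F_{j,A} \subset \limsup_{j} F_{j,A}$. The Borel--Cantelli lemma for Hausdorff content then gives $\mathcal{H}^{s}(E \cap I) = 0$, and since $s > 0$ was arbitrary, $\Hd(E \cap I) = 0$.

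The only real obstacle is the sublevel set lemma; everything else is bookkeeping. It is essential that the bound on $\max_{k}|f^{(k)}|$ supplied by the order-$K$ transversality hypothesis is \emph{pointwise on all of $I$}, not just at one point, since this is what lets the inductive argument recurse without loss. The derivative bound $M$ must also be uniform in word length, which is where the strict contraction $\sup_{I}\lambda_{j} < 1$ is used to make the series defining $\psi_{u,\mathbf{i}}(0)$ and its first $K$ derivatives absolutely convergent at a geometric rate.
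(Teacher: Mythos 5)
Your strategy coincides with the paper's: localise to a compact subinterval, apply a sublevel-set estimate to each $\Delta_{\mathbf{i},\mathbf{j}}$ along the sequence $(n_j)$, count the $m^{2n_j}$ pairs, and sum. The paper phrases the conclusion via a lower box-dimension estimate on the sets $E^N_\epsilon$, whereas you use a Borel--Cantelli argument on Hausdorff content; these are interchangeable here, and the latter is arguably a bit cleaner.

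There is, however, one quantitative slip in your stated sublevel-set lemma. You claim that under the hypotheses $\|f^{(k)}\|_{L^\infty(I)}\le M$ for $k\le K$ and $\max_{k\le K}|f^{(k)}(u)|\ge\delta$ for all $u\in I$, the set $\{|f|\le\epsilon\}$ is covered by at most $N_0$ intervals with $N_0$ depending only on $K,M,|I|$. This is false: the number of components of $\{|f|\le\epsilon\}$ genuinely grows as $\delta\to 0$ (think of an oscillating function whose slope on the sublevel set is of order $\delta$: consecutive zero-crossings can be separated by time of order $\delta/M$, giving roughly $M|I|/\delta$ components). The correct statement, which is exactly \cite[Lemma 5.8]{Ho} cited in the paper, covers the sublevel set by $O_{K,M}(\delta^{-2})$ intervals of length $O_{K,M}((\epsilon/\delta)^{1/2^K})$. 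Your exponent $1/K$ is also more optimistic than what the inductive scheme you sketch actually delivers (each inductive step halves the exponent, yielding $1/2^K$), and in the induction one descends to $f$ at order $K-1$ on the complementary set rather than passing to $f'$. None of this damages your overall argument, though: with $\delta=c^{n_j}$ the interval count becomes $O(c^{-2n_j})$, which only introduces another factor $\exp(2n_j\log(1/c))$ in your content estimate, and this is absorbed by taking $A$ larger. So the proof survives with the corrected lemma, and indeed then matches the paper's computation line for line.
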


\begin{proof}
We follow the proof of \cite[Theorem 5.9]{Ho}, which seems to work fine with our definition of transversality. Without change in notation, we replace $U$ by a compact subinterval; it clearly suffices to show that the part of $E$ in any such subinterval has Hausdorff dimension zero. In particular, then we have
\begin{displaymath}
C := C_{U} := \max_{0 \leq k \leq K} \sup_{n \geq 1} \max_{\mathbf{i},\mathbf{j} \in \{1,\ldots,m\}^{n}} \|\Delta^{(k)}_{\mathbf{i},\mathbf{j}}\|_{L^{\infty}(U)} < \infty,
\end{displaymath}
noting that the contraction parameters $\lambda_{j}(u)$ are uniformly bounded away from $1$ on $U$. We observe that $E \subset \bigcap_{\epsilon > 0} E_{\epsilon}$, where
\begin{displaymath}
E_{\epsilon} := \bigcup_{N \in \N} \bigcap_{j \geq N} \mathop{\bigcup_{\mathbf{i},\mathbf{j} \in \{1,\ldots,m\}^{n_{j}}}}_{\mathbf{i} \neq \mathbf{j}} \{u \in U : |\Delta_{\mathbf{i},\mathbf{j}}(u)| < \epsilon^{n_{j}}\} =: \bigcup_{N \in \N} E_{\epsilon}^{N},
\end{displaymath}
and $(n_{j})_{j \in \N}$ is the sequence from the definition of transversality. So, it suffices to argue that $\underline{\dim}_{\mathrm{B}} E_{\epsilon}^{N} = o_{C,K,m}(\epsilon)$, where $\underline{\dim}_{\mathrm{B}}$ denotes the lower box dimension, an upper bound for Hausdorff dimension. Fix $N 
\in \N$, pick $0 < \epsilon < c$, and then choose $j \geq N$ so large that $\epsilon^{n_{j}} < c^{n_{j}}/2^{K}$. By \cite[Lemma 5.8]{Ho}, the sets
\begin{displaymath}
E_{\epsilon}^{\mathbf{i},\mathbf{j}} := \{u \in U : |\Delta_{\mathbf{i},\mathbf{j}}(u)| < \epsilon^{n_{j}}\}, \qquad \mathbf{i},\mathbf{j} \in \{1,\ldots,m\}^{n_{j}}, \: \mathbf{i} \neq \mathbf{j},
\end{displaymath}
can be covered, each, by $\lesssim_{C} c^{-2n_{j}}$ intervals of length 
\begin{displaymath}
\leq 2(\epsilon^{n_{j}}/c^{n_{j}})^{1/2^{K}} =: r_{n_{j}}.
\end{displaymath}
Given that there are only $m^{2n_{j}}$ options for the pair $\mathbf{i},\mathbf{j} \in \{1,\ldots,m\}^{n_{j}}$, this implies that
\begin{displaymath}
N(E_{\epsilon}^{N},r_{n_{j}}) \leq N \bigg( \mathop{\bigcup_{\mathbf{i},\mathbf{j} \in \{1,\ldots,m\}^{n_{j}}}}_{\mathbf{i} \neq \mathbf{j}} E_{\epsilon}^{\mathbf{i},\mathbf{j}} , r_{n_{j}} \bigg) \lesssim_{C} \left(\frac{m}{c}\right)^{2n_{j}},
\end{displaymath}
where $N(A,r)$ is the least number of intervals of length $r>0$ needed to cover a bounded set $A \subset \mathbb{R}$. It follows that
\begin{displaymath}
\underline{\dim}_{\mathrm{B}} E^{N}_{\epsilon} \leq \liminf_{j \to \infty} \frac{\log N(E_{\epsilon}^{N},r_{n_{j}})}{-\log r_{n_{j}}} \leq \liminf_{j \to \infty} \frac{O(C) + 2n_{j} \log (m/c)}{(n_{j}/2^{K}) \log(c/\epsilon) - \log 2} = o_{C,K,m}(\epsilon),
\end{displaymath} 
as claimed. The proof is complete.
\end{proof}

\end{document}